\documentclass[a4paper,twoside]{article}
\usepackage{a4}
\usepackage{amssymb}
\usepackage{amsmath}
\usepackage{upref}
\usepackage{url}
\usepackage{aligned-overset}
\usepackage[active]{srcltx}
\usepackage[dvipsnames]{xcolor} 
\usepackage[pagebackref,colorlinks,citecolor=blue,linkcolor=blue,urlcolor=blue]{hyperref}
\allowdisplaybreaks[2] 
%
%
%
\newcount\minutes \newcount\hours
\hours=\time
\divide\hours 60
\minutes=\hours
\multiply\minutes -60
\advance\minutes \time
\newcommand{\klockan}{\the\hours:{\ifnum\minutes<10 0\fi}\the\minutes}
\newcommand{\tid}{\today\ \klockan}
\newcommand{\prtid}{\smash{\raise 10mm \hbox{\LaTeX ed \tid}}}
\renewcommand{\prtid}{}
%
%
\makeatletter
\pagestyle{headings}
\headheight 10pt
\def\sectionmark#1{} 
\def\subsectionmark#1{}
\newcommand{\sectnr}{\ifnum \c@secnumdepth >\z@
                 \thesection.\hskip 1em\relax \fi}
\def\@evenhead{\footnotesize\rm\thepage\hfil\leftmark\hfil\llap{\prtid}}
\def\@oddhead{\footnotesize\rm\rlap{\prtid}\hfil\rightmark\hfil\thepage}
\def\tableofcontents{\section*{Contents} 
 \@starttoc{toc}}
\makeatother
%
%
\makeatletter
\def\@biblabel#1{#1.}
\makeatother
%
%
%
\makeatletter
\let\Thebibliography=\thebibliography
\renewcommand{\thebibliography}[1]{\def\@mkboth##1##2{}\Thebibliography{#1}
\addcontentsline{toc}{section}{References}
\frenchspacing 
\setlength{\@topsep}{0pt}
\setlength{\itemsep}{0pt}%
\setlength{\parskip}{0pt plus 2pt}%
}
\makeatother
%
%
\makeatletter
\def\mdots@{\mathinner.\nonscript\!.%
 \ifx\next,.\else\ifx\next;.\else\ifx\next..\else
 \nonscript\!\mathinner.\fi\fi\fi}
\let\ldots\mdots@
\let\cdots\mdots@
\let\dotso\mdots@
\let\dotsb\mdots@
\let\dotsm\mdots@
\let\dotsc\mdots@
\def\vdots{\vbox{\baselineskip2.8\p@ \lineskiplimit\z@
    \kern6\p@\hbox{.}\hbox{.}\hbox{.}\kern3\p@}}
\def\ddots{\mathinner{\mkern1mu\raise8.6\p@\vbox{\kern7\p@\hbox{.}}%
    \raise5.8\p@\hbox{.}\raise3\p@\hbox{.}\mkern1mu}}
\makeatother
%
%
\makeatletter
\let\Enumerate=\enumerate
\renewcommand{\enumerate}{\Enumerate%
\setlength{\itemsep}{0pt}%
\setlength{\parskip}{0pt plus 1pt}%
\renewcommand{\theenumi}{\textup{(\alph{enumi})}}%
\renewcommand{\labelenumi}{\theenumi}%
}
\makeatother
%
%
\makeatletter
\let\Itemize=\itemize
\renewcommand{\itemize}{\Itemize%
\setlength{\itemsep}{0pt}%
\setlength{\parskip}{0pt plus 1pt}%
}
\makeatother
%
%
\makeatletter
\def\@seccntformat#1{\csname the#1\endcsname.\quad}
\makeatother
%
%
\newcommand{\authortitle}[2]{\author{#1}\title{#2}\markboth{#1}{#2}}
%
%
\newcommand{\auth}[2]{{#1, #2.}}
\newcommand{\art}[6]{{\sc #1, \rm #2, \it #3 \bf #4 \rm (#5), \mbox{#6}.}}

\newcommand{\artprep}[3]{{\sc #1, \rm #2, #3.}}

\newcommand{\book}[3]{{\sc #1, \it #2, \rm #3.}}
\newcommand{\AND}{{\rm and }}
\newcommand{\arXiv}[1]{{\tt \href{https://arxiv.org/abs/#1}{arXiv:#1}}}
%
%
\RequirePackage{amsthm}
\newtheoremstyle{descriptive}%
  {\topsep}   
  {\topsep}   
  {\rmfamily} 
  {}          
  {\bfseries} 
  {.}         
  { }         
  {}          
\newtheoremstyle{propositional}%
  {\topsep}   
  {\topsep}   
  {\itshape}  
  {}          
  {\bfseries} 
  {.}         
  { }         
  {}          
\theoremstyle{propositional}
\newtheorem{thm}{Theorem}[section]
\newtheorem{prop}[thm]{Proposition}

\newtheorem{theorem}[thm]{Theorem}
\newtheorem{lemma}[thm]{Lemma}
\newtheorem{corollary}[thm]{Corollary}

\theoremstyle{descriptive}

\newtheorem{example}[thm]{Example}

\newtheorem{definition}[thm]{Definition}

%
%
%
%
%
\makeatletter
\renewenvironment{proof}[1][\proofname]{\par
  \pushQED{\qed}%
  \normalfont
  \trivlist
  \item[\hskip\labelsep
        \itshape
    #1\@addpunct{.}]\ignorespaces
}{%
  \popQED\endtrivlist\@endpefalse
}
\makeatother
{\catcode`p =12 \catcode`t =12 \gdef\eeaa#1pt{#1}}      
\def\accentadjtext#1{\setbox0\hbox{$#1$}\kern   
                \expandafter\eeaa\the\fontdimen1\textfont1 \ht0 }
\def\accentadjscript#1{\setbox0\hbox{$#1$}\kern 
                \expandafter\eeaa\the\fontdimen1\scriptfont1 \ht0 }
\def\accentadjscriptscript#1{\setbox0\hbox{$#1$}\kern   
                \expandafter\eeaa\the\fontdimen1\scriptscriptfont1 \ht0 }
\def\accentadjtextback#1{\setbox0\hbox{$#1$}\kern       
                -\expandafter\eeaa\the\fontdimen1\textfont1 \ht0 }
\def\accentadjscriptback#1{\setbox0\hbox{$#1$}\kern     
                -\expandafter\eeaa\the\fontdimen1\scriptfont1 \ht0 }
\def\accentadjscriptscriptback#1{\setbox0\hbox{$#1$}\kern 
                -\expandafter\eeaa\the\fontdimen1\scriptscriptfont1 \ht0 }
\def\itoverline#1{{\mathsurround0pt\mathchoice
        {\rlap{$\accentadjtext{\displaystyle #1}
                \accentadjtext{\vrule height1.593pt}
                \overline{\phantom{\displaystyle #1}
                \accentadjtextback{\displaystyle #1}}$}{#1}}
        {\rlap{$\accentadjtext{\textstyle #1}
                \accentadjtext{\vrule height1.593pt}
                \overline{\phantom{\textstyle #1}
                \accentadjtextback{\textstyle #1}}$}{#1}}
        {\rlap{$\accentadjscript{\scriptstyle #1}
                \accentadjscript{\vrule height1.593pt}
                \overline{\phantom{\scriptstyle #1}
                \accentadjscriptback{\scriptstyle #1}}$}{#1}}
        {\rlap{$\accentadjscriptscript{\scriptscriptstyle #1}
                \accentadjscriptscript{\vrule height1.593pt}
                \overline{\phantom{\scriptscriptstyle #1}
                \accentadjscriptscriptback{\scriptscriptstyle #1}}$}{#1}}}}
\def\itunderline#1{{\mathsurround0pt\mathchoice
        {\rlap{$\underline{\phantom{\displaystyle #1}
                \accentadjtextback{\displaystyle #1}}$}{#1}}
        {\rlap{$\underline{\phantom{\textstyle #1}
                \accentadjtextback{\textstyle #1}}$}{#1}}
        {\rlap{$\underline{\phantom{\scriptstyle #1}
                \accentadjscriptback{\scriptstyle #1}}$}{#1}}
        {\rlap{$\underline{\phantom{\scriptscriptstyle #1}
                \accentadjscriptscriptback{\scriptscriptstyle #1}}$}{#1}}}}
%
%
\newcommand{\setm}{\setminus}
\renewcommand{\emptyset}{\varnothing}

%
%
\renewcommand{\subsetneq}{\varsubsetneq}
%
%
\DeclareMathOperator{\diam}{diam}
\DeclareMathOperator{\Div}{div}
\newcommand{\bdry}{\partial}
\newcommand{\bdy}{\bdry}
\newcommand{\bdyhat}{\widehat{\bdy}}
\newcommand{\bdystar}{\widehat{\bdy}}
\newcommand{\grad}{\nabla}
\newcommand{\simge}{\gtrsim}
\newcommand{\simle}{\lesssim}
\DeclareMathOperator{\capp}{cap}
\newcommand{\cp}{\capp_p}
\newcommand{\cn}{\capp_n}
\newcommand{\cpw}{\capp_{p,w}}
\newcommand{\loc}{_{\textup{loc}}}

\DeclareMathOperator{\spt}{supp}
\DeclareMathOperator{\supp}{supp}
%
%
\newcommand{\Hp}{H^{1,p}}
\newcommand{\Hploc}{\Hp\loc}
%
%
\newcommand{\al}{\alpha}
\newcommand{\alp}{\alpha}
\newcommand{\de}{\delta}
\newcommand{\ga}{\gamma}
\newcommand{\la}{\lambda}
\newcommand{\eps}{\varepsilon}
\newcommand{\Om}{\Omega}
\newcommand{\p}{{$p\mspace{1mu}$}}
\newcommand{\binfty}{{\boldsymbol{\infty}}}
\newcommand{\R}{\mathbf{R}}
\newcommand{\Rn}{{\R^n}}
\newcommand{\A}{{\mathcal{A}}}
\newcommand{\B}{{\mathcal{B}}}
\renewcommand{\phi}{\varphi}
\newcommand{\clB}{\itoverline{B}}
\newcommand{\clOm}{\overline{\Om}}
\newcommand{\Omc}{\Om^c}
\newcommand{\UU}{\mathcal{U}}
\newcommand{\uP}{\itoverline{P}}     
\newcommand{\lP}{\itunderline{P}} 
\newcommand{\ft}{\tilde{f}}
\newcommand{\fhat}{\hat{f}}
\newcommand{\ut}{\tilde{u}}
\newcommand{\vt}{\tilde{v}}
\newcommand{\phit}{\widetilde{\phi}}
\newcommand{\clE}{\itoverline{E}}
\newcommand{\clG}{\itoverline{G}}

\newcommand\fat[1]{\ensuremath{\boldsymbol{{#1}}}}
%
%
%
\numberwithin{equation}{section}
\newcommand{\eqv}{\ensuremath{
         \mathchoice{\quad \Longleftrightarrow \quad}{\Leftrightarrow}
                {\Leftrightarrow}{\Leftrightarrow}}}
\newcommand{\imp}{\ensuremath{\mathchoice{\quad \Longrightarrow \quad}{\Rightarrow}
                {\Rightarrow}{\Rightarrow}}}
\newenvironment{ack}{\medskip{\it Acknowledgement.}}{}

%
%
\newcommand{\Lploc}{L^{p}\loc}
\newcommand{\dha}{\hat{d}}
\newcommand{\dhat}{\hat{d}}
\newcommand{\Xhat}{{\widehat{X}}}
\newcommand{\Xdot}{{\widehat{X}}}
\DeclareMathOperator{\Lip}{Lip}
\newcommand{\Lipc}{{\Lip_c}}
\newcommand{\muha}{\hat{\mu}}
\newcommand{\muhat}{{\hat{\mu}}}
\newcommand{\Cp}{{C_p}}
\newcommand{\Bhat}{{\widehat{B}}}
\newcommand{\cpXhat}{\capp_p^{\Xhat}}
\newcommand{\gh}{\hat{g}}

\begin{document}
\authortitle{Anders Bj\"orn, Jana Bj\"orn and David Manolis}
{Boundary regularity and Wiener-type criteria
 at $\binfty$ for nonlinear elliptic equations} 
\title{Boundary regularity and Wiener-type criteria\\
 at infinity \\ 
for nonlinear elliptic equations of \p-Laplace type}

\author{
Anders Bj\"orn \\
\it\small Department of Mathematics, Link\"oping University, SE-581 83 Link\"oping, Sweden\\
\it \small anders.bjorn@liu.se, ORCID\/\textup{:} 0000-0002-9677-8321
\\
\\
Jana Bj\"orn \\
\it\small Department of Mathematics, Link\"oping University, SE-581 83 Link\"oping, Sweden\\
\it \small jana.bjorn@liu.se, ORCID\/\textup{:} 0000-0002-1238-6751
\\
\\
David Manolis \\
\it\small Department of Mathematics, Link\"oping University, SE-581 83 Link\"oping, Sweden\\
\it \small david.manolis@liu.se, ORCID\/\textup{:}  0009-0006-4916-899X
}

\date{}

\maketitle

 \noindent{\small
{\bf Abstract}. 
We study boundary regularity at the infinity point $\binfty$ for nonlinear elliptic equations of \p-Laplace type
in unbounded open sets $\Om \subset \Rn$.
We consider the case $p \ge n \ge 2$ and characterize the regularity at $\binfty$ by means of 
Wiener-type integrals.
Our approach uses circular inversion, which maps $\binfty$ to the origin and 
the original nonlinear equation to a similar weighted equation.
The Wiener criterion at the origin for such equations is then transformed back
to provide Wiener-type criteria at $\binfty$.
When $p>n$, the criteria simplify so that $\binfty$ is regular
if
and only if the boundary $\bdy\Om$ is unbounded.
For $p=n$ this is not true, as shown by an example.
This simplified criterion is also proved for \p-harmonic functions in unbounded
open subsets of Ahlfors $Q$-regular metric measure spaces with 
$Q<p$,  supporting a Poincar\'e inequality.
}

\medskip

\noindent {\small \emph{Key words and phrases}: 
Ahlfors $Q$-regular measure,
boundary regularity at infinity,
capacity,
nonlinear elliptic equation of \p-Laplace type,
Perron solution, 
\p-harmonic function,
Sobolev space,
Wiener criterion.
}

\medskip

\noindent {\small \emph{Mathematics Subject Classification} (2020):
Primary: 35J25, 31C45;  
Secondary: 31E05. 
}



\section{Introduction}

In this paper we study boundary regularity at $\binfty$ for $\A$-harmonic functions, 
i.e.\ for continuous solutions of the nonlinear elliptic equation of \p-Laplace type,
\begin{equation} \label{Div-A}
    \Div\A(x,\grad u(x)) = 0,
    \quad x \in \Om, 
\end{equation}
in unbounded open sets $\Om \subset \Rn$, 
where $p \ge n \ge 2$
and the map $\A$ satisfies the
standard ellipticity
assumptions~\ref{A1}--\ref{A3}
stated at  the beginning of Section~\ref{A-harm-reg} (with $w \equiv 1$).
The standard \p-Laplace equation $\Delta_pu=0$
with $\A(x, \grad u) = |\grad u|^{p-2} \grad u$ is included as a special case.

The point $\binfty$ is \emph{regular} for the equation~\eqref{Div-A} in $\Om$, 
if for every $f \in C(\bdy \Om \cup \{\binfty\})$,   
the continuous 
solution $u$ of the Dirichlet problem for~\eqref{Div-A} 
with $f$ as boundary data satisfies
\begin{equation*}
    \lim_{x\to\binfty} u(x) = f(\binfty). 
\end{equation*}
(See Definition~\ref{A-reg} for a more precise definition.) 
When $1<p<n$, it was shown by 
Kilpel\"ainen~\cite[Remark~5.1 and Proposition~5.2]{Kilp89} 
that $\binfty$
is always regular for $\A$-harmonic functions in $\Om$. 
In this paper, we study the case $p\ge n$.
The following Wiener-type criterion is our first result.
We write $B_r=B(0,r)$ to denote the open ball 
centred
at the origin with radius~$r$.

\begin{thm}  \label{thm-Wiener-Br-in-Om-intro}
Let $p\ge n\ge2$ and $\Om \subsetneq \Rn$ be an unbounded open set.
Then $\binfty$ is regular for the equation~\eqref{Div-A} 
with respect to $\Om$ if and only if 
\begin{equation}    \label{eq-Wiener-Br-in-Om-intro}
\int_{1}^{\infty} \biggl( \frac{\cp(\clB_{r}, \Om \cup B_{2r})}{r^{n-p}} 
    \biggr)^{1/(p-1)} \,\frac{dr}{r} = \infty,
\end{equation}
where
$\cp$ is the condenser \p-capacity as in 
Definition~\ref{admis} \textup(with $w \equiv 1$\textup).
\end{thm}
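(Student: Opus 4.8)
The plan is to transfer the problem to the origin by circular inversion, apply the Wiener criterion there, and transform the resulting integral back. Let $T(x)=x/|x|^2$ be the circular inversion, a conformal involution of $\Rn\setm\{0\}$ with $T(\binfty)=0$, and set $\Om^*=T(\Om\setm\{0\})$; since $\Om$ is unbounded, $0\in\bdy\Om^*$. A change of variables in the weak formulation of~\eqref{Div-A} shows that $u$ is $\A$-harmonic in $\Om$ if and only if $v=u\circ T$ solves a similar equation $\Div\widetilde\A(y,\grad v)=0$ in $\Om^*$, where $\widetilde\A$ satisfies the structure conditions~\ref{A1}--\ref{A3} with the power weight $w(y)=|y|^{2(p-n)}$; the hypothesis $p\ge n\ge2$ gives $0\le2(p-n)<n(p-1)$, so $w$ is a Muckenhoupt $A_p$ weight, hence \p-admissible. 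This correspondence respects sub/superharmonicity and Perron solutions and pairs $f\in C(\bdy\Om\cup\{\binfty\})$ with $\ft\in C(\bdy\Om^*\cup\{0\})$ satisfying $\ft(0)=f(\binfty)$ and $\lim_{x\to\binfty}u(x)=\lim_{y\to0}v(y)$; consequently $\binfty$ is regular for~\eqref{Div-A} with respect to $\Om$ if and only if the origin is regular for $\Div\widetilde\A(y,\grad v)=0$ with respect to $\Om^*$.

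Next I would invoke the Wiener criterion at the origin for the weighted equation: the origin is regular for $\widetilde\A$-harmonic functions in $\Om^*$ if and only if
\[
\int_0^1\biggl(\frac{\cpw(\clB_\rho\setm\Om^*,B_{2\rho})}{\cpw(\clB_\rho,B_{2\rho})}\biggr)^{1/(p-1)}\frac{d\rho}{\rho}=\infty .
\]
Scaling of both the measure $w\,dy$ and the \p-energy gives $\cpw(\clB_\rho,B_{2\rho})=c\,\rho^{p-n}$ with $c\in(0,\infty)$ independent of $\rho$, so the normalizing capacity may be replaced by $\rho^{p-n}$.

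The core of the argument is to re-express $\cpw(\clB_\rho\setm\Om^*,B_{2\rho})$ in the original coordinates. First, the condenser duality $\phi\mapsto1-\phi$ (extended by $1$ outside $B_{2\rho}$) identifies $\cpw(\clB_\rho\setm\Om^*,B_{2\rho})$ with the weighted \p-capacity of the condenser whose conductors are $\Rn\setm B_{2\rho}$ and $\clB_\rho\setm\Om^*$. Then conformal invariance of the \p-energy under $T$ turns this weighted capacity into the unweighted \p-capacity, on $\Rn\cup\{\binfty\}$, of the image condenser, whose conductors are $\clB_{1/(2\rho)}$ and $(\Omc\setm B_{1/\rho})\cup\{\binfty\}$. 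Finally one discards the single point $\binfty$: since $p\ge n$, the point $\binfty$ has zero \p-capacity (equivalently, $\Rn$ is \p-parabolic), so a truncation/cutoff argument shows the capacity is unchanged. Writing $r=1/(2\rho)$, this yields
\[
\cpw(\clB_\rho\setm\Om^*,B_{2\rho})=\cp(\clB_r,\Om\cup B_{2r}),\qquad r=\tfrac1{2\rho}.
\]

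Plugging this and $\cpw(\clB_\rho,B_{2\rho})=c\,\rho^{p-n}\simeq r^{n-p}$ into the Wiener integral at the origin and substituting $\rho=1/(2r)$, $d\rho/\rho=-dr/r$, turns the range $\rho\in(0,1)$ into $r\in(\tfrac12,\infty)$; as $\cp(\clB_r,\Om\cup B_{2r})\le\cp(\clB_r,B_{2r})\simeq r^{n-p}$ the integrand is bounded on $(\tfrac12,1)$, so divergence of the integral is equivalent to~\eqref{eq-Wiener-Br-in-Om-intro}, which proves the theorem. I expect the delicate points to be the bookkeeping near $\binfty$ — establishing the inversion correspondence for regularity, the conformal change-of-variables for the capacities, and in particular the use of \p-parabolicity to absorb the point $\binfty$ — together with being careful about which class of admissible test functions defines each of the capacities involved; no single hard estimate should be needed once the inversion machinery and the weighted Wiener criterion are available.
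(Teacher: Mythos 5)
Your proposal is correct and follows essentially the same route as the paper: circular inversion to a weighted equation with weight $|y|^{2(p-n)}$, equivalence of regularity at $\binfty$ and at the origin (the paper's Theorem~\ref{thm-reg-eqv}), the weighted Wiener criterion, and the capacity identity $\cpw(K,G)=\cp(\Rn\setm T(G),\Rn\setm T(K))$ proved via the $1-\phi$ duality and the fact that points and compacta have zero \p-capacity in $\Rn$ when $p\ge n$ (the paper's Lemma~\ref{lem-cap-eq-unbdd}). The only detail you gloss over is the case $0\in\Om$, which the paper dispatches by noting that regularity at $\binfty$ is a local property, so one may assume $0\notin\Om$ before inverting.
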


A direct consequence is that the regularity of $\binfty$ 
depends exclusively on $p$ and~$n$, and not on the map $\A$. 
Note also that if $\Om=\Rn$, then $u(x)\equiv f(\infty)$ is the 
unique solution of the Dirichlet problem for~\eqref{Div-A}
with boundary data~$f$
and hence
$\binfty$ is regular while the 
integral in \eqref{eq-Wiener-Br-in-Om-intro} is
convergent (because the integrand is  identically zero by \eqref{eq-caplog-p>=n}). 
Thus the 
equivalence in Theorem~\ref{thm-Wiener-Br-in-Om-intro} fails
in this case.

For $p >n$, 
Theorem~\ref{thm-Wiener-Br-in-Om-intro} 
implies
the following simpler characterization.

\begin{theorem} \label{thm-reg-p>n}
Let $p>n$ and $\Om \subsetneq \Rn$ be an unbounded open set.
Then $\binfty$ is regular for the equation~\eqref{Div-A} 
with respect to $\Om$
if and only if $\bdy\Om$ is unbounded.
\end{theorem}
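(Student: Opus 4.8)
The plan is to derive the theorem directly from Theorem~\ref{thm-Wiener-Br-in-Om-intro}. Since $\Om\subsetneq\Rn$, that result says $\binfty$ is regular if and only if the Wiener integral \eqref{eq-Wiener-Br-in-Om-intro} diverges, so it suffices to prove that, when $p>n$, this integral diverges exactly when $\bdy\Om$ is unbounded.

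First I would handle the implication ``$\bdy\Om$ bounded $\Rightarrow$ \eqref{eq-Wiener-Br-in-Om-intro} converges''. If $\bdy\Om\subset B_R$, then $\Rn\setm\clB_R$ is connected (here $n\ge2$) and disjoint from $\bdy\Om$, hence contained either in $\Om$ or in $\Rn\setm\clOm$; since $\Om$ is unbounded it must lie in $\Om$. Consequently $\Om\cup B_{2r}=\Rn$ for every $r>R/2$, and by \eqref{eq-caplog-p>=n} (which gives $\cp(\clB_r,\Rn)=0$ for $p\ge n$) the integrand in \eqref{eq-Wiener-Br-in-Om-intro} vanishes identically there. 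Thus the integral converges and Theorem~\ref{thm-Wiener-Br-in-Om-intro} shows $\binfty$ is not regular.

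For the converse, unboundedness of $\bdy\Om$ lets us choose $y_k\in\bdy\Om$ with $|y_{k+1}|>2|y_k|$ and $|y_1|\ge8$, so that the intervals $I_k:=[|y_k|/4,|y_k|/2]$ are pairwise disjoint subsets of $(1,\infty)$, each of logarithmic length $\log2$. For $r\in I_k$ we have $|y_k|\ge2r$, hence $y_k\notin B_{2r}$, while $y_k\in\bdy\Om\subset\Om^c$ because $\Om$ is open; therefore $y_k\notin\Om\cup B_{2r}$. By monotonicity of the condenser capacity in its second argument and by scaling,
\[
\cp(\clB_r,\Om\cup B_{2r})\ge\cp(\clB_r,\Rn\setm\{y_k\})=r^{n-p}\cp(\clB_1,\Rn\setm\{y_k/r\}),
\]
where $|y_k/r|\in[2,4]$. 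The decisive point is then the uniform bound
\[
c_0:=\inf\{\cp(\clB_1,\Rn\setm\{z\}):1<|z|\le4\}>0,
\]
and this is exactly where $p>n$ is used: for such $p$ every admissible function $u$ has a continuous representative, and applying the Morrey inequality $|u(x)-u(y)|\le C(n,p)|x-y|^{1-n/p}\|\grad u\|_{L^p(\Rn)}$ to the points $x=z/|z|\in\bdy B_1$ (where $u\ge1$) and $y=z$ (where $u=0$) gives $1\le C(n,p)3^{1-n/p}\|\grad u\|_{L^p(\Rn)}$, whence $\|\grad u\|_{L^p(\Rn)}^p\ge c_0>0$ with $c_0$ independent of $z$. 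Combining the two displays, the integrand in \eqref{eq-Wiener-Br-in-Om-intro} is at least $c_0^{1/(p-1)}$ on every $I_k$, so the integral diverges and Theorem~\ref{thm-Wiener-Br-in-Om-intro} yields that $\binfty$ is regular.

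The step I expect to be the main obstacle is this last uniform capacity estimate, i.e.\ recognizing that a single far-off boundary point already forces enough capacity: one must be certain that constraining an $\Hp$-function to equal $1$ on $\clB_1$ and to vanish at one point of the annulus $\{1<|z|\le4\}$ costs a definite amount of energy, uniformly in the location of that point. This is precisely the regime $p>n$, in which points carry positive \p-capacity; at $p=n$ the bound $c_0>0$ fails, in agreement with the example mentioned in the abstract showing that the theorem genuinely does not extend to $p=n$.
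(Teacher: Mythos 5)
Your proof is correct and follows the same overall strategy as the paper: reduce to Theorem~\ref{thm-Wiener-Br-in-Om-intro}, observe that a bounded boundary forces $\Om\cup B_{2r}=\Rn$ (hence zero integrand by \eqref{eq-caplog-p>=n}) for large $r$, and for unbounded $\bdy\Om$ extract a lacunary sequence of boundary points and bound the integrand from below on the disjoint intervals $[|y_k|/4,|y_k|/2]$. The one step you handle genuinely differently is the decisive lower bound $\cp(\clB_r,\Rn\setm\{y_k\})\simge r^{n-p}$: the paper transfers this via the inversion (Lemma~\ref{lem-cap-eq-unbdd}) to the weighted point capacity $\cpw(\{\xi_j\},B_{4|\xi_j|})$ with $\xi_j=T(x_j)$ and then invokes \eqref{eq-caplog-p>n}, whereas you stay in unweighted $\Rn$, rescale to $\cp(\clB_1,\Rn\setm\{z\})$ with $2\le|z|\le4$, and get a uniform positive lower bound from Morrey's inequality applied to admissible functions (which vanish near $z$ and are $\ge1$ on $\bdy B_1$). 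Your route is more elementary and self-contained for this direction -- it avoids re-entering the weighted/inverted picture and makes transparent that the whole point is that singletons have positive \p-capacity when $p>n$ -- at the cost of importing the Sobolev--Morrey embedding, which the paper's framework never needs explicitly. Both arguments are sound; you also supply the connectedness argument for $\Om\cup B_{2r}=\Rn$ in the bounded-boundary case, a detail the paper leaves implicit.
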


Regularity of finite boundary points was for
classical harmonic functions 
(i.e.\ with $p=2$), 
characterized in 1924 by means of the Wiener criterion~\cite{Wiener1924}.
It was extended to the nonlinear case, and operators $\A$ as here, 
on (unweighted) $\Rn$ by Maz'ya~\cite{Mazya} (sufficiency),
Lindqvist--Martio~\cite{LM85} (necessity, $p>n-1$) and 
Kilpel\"ainen--Mal\'y~\cite{KiMa94} (necessity, all $p>1$).
See Section~\ref{A-harm-reg} for more details.

If $p=2=n$, then harmonic functions are preserved by the circular inversion
$x\mapsto x/|x|^2$
and regularity at $\binfty$ can be studied by ``inverting'' the Riemann sphere. 
For $p=2<n$, harmonic functions are preserved by the Kelvin transform.
However, since $\binfty$ is always regular in this case, 
there is no need for a Wiener criterion at $\binfty$.
At the same time,
a stronger type of regularity at $\binfty$ was 
characterized by Abdulla~\cite{abdulla07} using a 
different Wiener-type criterion for $p=2<n$.

In the nonlinear case $p\ne2$, 
there is no Kelvin transform that transforms \p-harmonic (or $\A$-harmonic) 
functions into \p-harmonic (or $\A$-harmonic) functions.
This may be a reason why no one seems to have studied  Wiener criteria at $\binfty$ in the nonlinear case.

We show that the circular inversion $x\mapsto x/|x|^2$
transforms $\A$-harmonic (and thus \p-harmonic) functions
near $\binfty$ into $\B$-harmonic functions near $0$ for a suitable
\emph{weighted} map $\B$ of a similar type.
This is done in Section~\ref{circ-inv}.
In particular, Theorem~\ref{thm-reg-eqv} 
states that $\binfty$ is $\A$-regular for an unbounded domain 
if and only if the origin 
is $\B$-regular for the transformed domain.
Thus we can 
use the (weighted) Wiener criterion for $\B$ at the origin,
proved in Heinonen--Kilpel\"ainen--Martio~\cite{HeKiMa} and 
Mikkonen~\cite[Theorem~5.2]{Mikkonen}.
This is then used to prove 
Theorems~\ref{thm-Wiener-Br-in-Om-intro} and~\ref{thm-reg-p>n}
in Section~\ref{sect-1.1+1.2}.

Before that, the necessary 
theory about weighted Sobolev spaces, 
condenser capacities, $\A$-harmonic functions,
Perron solutions and boundary regularity is presented in 
Sections~\ref{sob-cap} and~\ref{A-harm-reg}.

Wiener criteria are usually formulated in terms of the 
capacity of the complement
$\Omc:=\Rn\setm\Om$, cf.\ Theorem~\ref{thm-Wiener}.
This is \emph{not} the case for the Wiener-type criterion in 
Theorem~\ref{thm-Wiener-Br-in-Om-intro}.
We therefore also give the following two alternative  forms 
of the Wiener criterion.
They have the advantage that the involved capacities
are taken with respect to bounded sets.

\begin{thm} \label{thm-main}
Let $p \ge n \ge 2$ and $\Om \subsetneq \Rn$ be an unbounded open set.
Then the following conditions are equivalent\/\textup:
\begin{enumerate}
\item \label{main-thm-0}
 The point $\binfty$ is regular for the  equation~\eqref{Div-A} with respect to $\Om$.
\item \label{main-thm-000}$
    \displaystyle \int_{1}^{\infty} \biggl(
    \frac{\cp(\Om^c \cap (\clB_{r^2} \setm B_{r}), B_{4^r} \setm \clB_{r/2})}
         {r^{n-p}} 
    \biggr)^{1/(p-1)} \,\frac{dr}{r} = \infty$.
\item \label{main-thm-00}$
    \displaystyle \int_{1}^{\infty} \biggl(
    \frac{\cp(\Om^c \cap (\clB_{2^r} \setm B_{r}), B_{4^r} \setm \clB_{r/2})}
         {r^{n-p}} 
    \biggr)^{1/(p-1)} \,\frac{dr}{r} = \infty$.
\end{enumerate}
Furthermore, the outer set $B_{4^r} \setm \clB_{r/2}$
in~\ref{main-thm-000} and~\ref{main-thm-00} can equivalently be replaced by 
$\Rn \setm \clB_{r/2}$.
\end{thm}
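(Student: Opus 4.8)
The plan is to reformulate Theorem~\ref{thm-Wiener-Br-in-Om-intro} in terms of the complement $\Omc$ and then pass to the stated forms through a chain of capacity comparisons. The first step is the \emph{complementation identity}
\begin{equation*}
\cp\bigl(\clB_r,\ \Om\cup B_{2r}\bigr)=\cp\bigl(\Omc\setm B_{2r},\ \Rn\setm\clB_r\bigr):
\end{equation*}
if $v$ is admissible for the left-hand capacity --- so $v=1$ on $\clB_r$ and $v=0$ on $\Rn\setm(\Om\cup B_{2r})=\Omc\setm B_{2r}$ --- then $1-v$ is admissible for the right-hand one and $\int|\grad v|^p$ is unchanged, and symmetrically; the unbounded-domain case is covered by the same Dirichlet-type admissible classes already needed for Theorem~\ref{thm-Wiener-Br-in-Om-intro}. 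Substituting this into Theorem~\ref{thm-Wiener-Br-in-Om-intro} and changing variables $r\mapsto r/2$ yields the intermediate criterion
\begin{equation*}
\binfty\ \text{is regular}\qquad\Longleftrightarrow\qquad\int_1^\infty\biggl(\frac{\cp\bigl(\Omc\setm B_r,\ \Rn\setm\clB_{r/2}\bigr)}{r^{n-p}}\biggr)^{1/(p-1)}\,\frac{dr}{r}=\infty,
\end{equation*}
which I denote by $(\star)$. Writing $(\star\star)$ and $(\star\star\star)$ for the variants of~\ref{main-thm-000} and~\ref{main-thm-00} in which $B_{4^r}\setm\clB_{r/2}$ is replaced by $\Rn\setm\clB_{r/2}$, it then remains to prove $(\star)\Leftrightarrow(\star\star)\Leftrightarrow(\star\star\star)$ together with the ``furthermore'' assertion, i.e.\ that~\ref{main-thm-000} and~\ref{main-thm-00} are equivalent to $(\star\star)$ and $(\star\star\star)$ respectively.

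The outer-set replacement is routine. One inequality is monotonicity of $\cp$; for the other, multiply an almost optimal test function for $\cp\bigl(\Omc\cap(\clB_{r^2}\setm B_r),\ \Rn\setm\clB_{r/2}\bigr)$ (resp.\ with $\clB_{2^r}$ in place of $\clB_{r^2}$) by a cutoff that equals $1$ on $\clB_{r^2}$ (resp.\ $\clB_{2^r}$) and is supported in $B_{4^r}$: this costs an extra term bounded by $\cp(\clB_{r^2},B_{4^r})$ (resp.\ $\cp(\clB_{2^r},B_{4^r})$). By~\eqref{eq-caplog-p>=n} and the explicit spherical formula these ring capacities are bounded by a constant times $r^{1-n}$ when $p=n$ and decay exponentially in $r$ when $p>n$, so after dividing by $r^{n-p}$ and raising to the power $1/(p-1)$ they contribute only a convergent integral; hence~\ref{main-thm-000} is equivalent to $(\star\star)$, and~\ref{main-thm-00} to $(\star\star\star)$. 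The easy half of the main equivalence is equally quick: the inclusions $\Omc\cap(\clB_{r^2}\setm B_r)\subset\Omc\cap(\clB_{2^r}\setm B_r)\subset\Omc\setm B_r$ (the first once $r\ge4$) and monotonicity of $\cp$ in its first argument show that the integrand of $(\star\star)$ is dominated by that of $(\star\star\star)$, which is dominated by that of $(\star)$, so $(\star\star)\Rightarrow(\star\star\star)\Rightarrow(\star)$.

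The heart of the argument is the reverse implication $(\star)\Rightarrow(\star\star)$, for which the key device is a \emph{super-dyadic} splitting of the tail. With $\sigma_k:=r^{2^k}$ one has $\Omc\setm B_r=\bigcup_{k\ge0}\Omc\cap(\clB_{\sigma_k^2}\setm B_{\sigma_k})$, precisely the annular shape occurring in~\ref{main-thm-000}. Countable subadditivity of $\cp$ in its first argument, monotonicity in the second (since $\clB_{r/2}\subset\clB_{\sigma_k/2}$), the bound $r^{n-p}\ge\sigma_k^{n-p}$ (valid as $n-p\le0$), and the elementary inequality $\bigl(\sum_k a_k\bigr)^{1/(p-1)}\le\sum_k a_k^{1/(p-1)}$ (valid as $p\ge n\ge2$) combine to give
\begin{equation*}
\biggl(\frac{\cp\bigl(\Omc\setm B_r,\ \Rn\setm\clB_{r/2}\bigr)}{r^{n-p}}\biggr)^{1/(p-1)}\le\sum_{k\ge0}g\bigl(r^{2^k}\bigr),\qquad g(\sigma):=\biggl(\frac{\cp\bigl(\Omc\cap(\clB_{\sigma^2}\setm B_\sigma),\ \Rn\setm\clB_{\sigma/2}\bigr)}{\sigma^{n-p}}\biggr)^{1/(p-1)}.
\end{equation*}
Integrating over $r\in[1,\infty)$ against $dr/r$ and substituting $\sigma=r^{2^k}$ in the $k$-th term, which introduces a factor $2^{-k}$ (as $d\sigma/\sigma=2^k\,dr/r$), shows that the $(\star)$-integral is at most $2\int_1^\infty g(\sigma)\,d\sigma/\sigma$; thus convergence of $\int_1^\infty g$, i.e.\ of the $(\star\star)$-integral, forces convergence of the $(\star)$-integral. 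This closes $(\star)\Leftrightarrow(\star\star)$; and since $\clB_{r^2}\subset\clB_{2^r}$ for $r\ge4$, convergence of the $(\star\star\star)$-integral implies convergence of the $(\star\star)$-integral, hence of the $(\star)$-integral, which completes $(\star)\Leftrightarrow(\star\star\star)$. (Alternatively one may split along the iterated exponentials $\tau_0=r$, $\tau_{k+1}=2^{\tau_k}$; the substitution then introduces iterated logarithms, but the series stays summable because $\sum_k\prod_{j=1}^{k}(\log^{(j)}\sigma)^{-1}$ remains bounded for large $\sigma$.)

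The step I expect to be the real obstacle is precisely this super-dyadic decomposition: one must recognise that $[r,\infty)$ has to be cut at the towers $r^{2^k}$ (respectively the iterated exponentials), so that the annuli $\clB_{\sigma^2}\setm B_\sigma$ (respectively $\clB_{2^\sigma}\setm B_\sigma$) tile the tail and the change of variables manufactures a summable factor, and one must keep every monotonicity direction straight, since $n-p\le0$ reverses several of them; some care with the boundary spheres and with $r$ close to $1$ (where every integrand vanishes) is also needed. By contrast, the complementation identity together with Theorem~\ref{thm-Wiener-Br-in-Om-intro} gives $(\star)$ immediately --- alternatively $(\star)$ comes straight out of the circular inversion and the weighted Wiener criterion at the origin, exactly as in the proof of Theorem~\ref{thm-Wiener-Br-in-Om-intro} --- and the outer-set cutoff is routine once the smallness of $\cp(\clB_{r^2},B_{4^r})$ and $\cp(\clB_{2^r},B_{4^r})$ has been recorded.
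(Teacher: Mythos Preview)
Your approach is essentially correct and uses the same central idea as the paper---the super-dyadic decomposition $\Omc\setm B_r=\bigcup_{k\ge0}\Omc\cap(\clB_{\sigma_k^2}\setm B_{\sigma_k})$ with $\sigma_k=r^{2^k}$, followed by the substitution $\sigma=r^{2^k}$ that produces a geometric factor $2^{-k}$. The paper, however, carries out this decomposition on the \emph{inverted} weighted side near the origin (Lemma~\ref{lem-int-est-1}, combined with the inner-ball cutoff Lemma~\ref{lem-cap-est-1}) and only transforms back to the unweighted picture at the very end via Lemma~\ref{lem-cap-eq-2}; you instead stay on the unweighted side throughout. The advantage of the paper's route is that every capacity appearing has compact first argument in a bounded outer set, so Definition~\ref{admis} and~\eqref{eq-Wp} apply verbatim. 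Your intermediate object $\cp(\Omc\setm B_r,\Rn\setm\clB_{r/2})$ has an unbounded condenser plate and does \emph{not} fall under the paper's definitions: if one interprets it via~\eqref{eq-Wp}, the admissible class is empty (hence the capacity $+\infty$) whenever $\Omc\setm B_r$ has infinite Lebesgue measure, since functions in $H^{1,p}_0$ lie in $L^p$. Making your complementation identity precise therefore requires either an ad hoc ``Dirichlet energy'' class without the $L^p$ constraint (whose countable subadditivity you would then have to re-establish), or essentially the content of Lemma~\ref{lem-cap-eq-unbdd}---which is exactly what the paper uses to prove Theorem~\ref{thm-Wiener-Br-in-Om-intro} and what you invoke only as an alternative for~$(\star)$.

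A smaller point: your outer-set replacement says ``multiply by a cutoff'', but multiplication yields a cross term $\int|u|^p|\nabla\eta|^p$, not the additive $\cp(\clB_{r^2},B_{4^r})$ you assert. The correct device is the $\min$-construction $\phi=\min\{u,\eta\}$ with $\eta$ admissible for $\cp(\clB_{r^2},B_{4^r})$, which \emph{does} give the additive bound---this is precisely Lemma~\ref{lem-cap-est-1} (transported to the other side of the annulus). With these two fixes your argument goes through; what it buys is avoiding the weighted capacity $\cpw$ in the main estimate, at the cost of handling nonstandard unbounded condensers.
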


Theorem~\ref{thm-main} is proved in Section~\ref{cap-int-est}
by rewriting the weighted Wiener criterion for $\B$-harmonic functions
to the original coordinates when $p\ge n$.
This requires careful estimates for condenser capacities and Wiener-type 
integrals.

In Section~\ref{sec-ex} we give two counterexamples for $p=n$.
Example~\ref{ex-not-simple-p=n} 
shows
that the simple characterization in
 Theorem~\ref{thm-reg-p>n} fails when $p=n$.
This example uses the Wiener criterion
in Theorem~\ref{thm-main}\ref{main-thm-000}.
Example~\ref{ex-p=n} is instead devoted to
showing that 
the somewhat unusual powers $r^2$ and $2^r$ in 
conditions \ref{main-thm-000} and \ref{main-thm-00}
in Theorem~\ref{thm-main}
cannot be replaced by linear functions
when $p=n$.
(For $p>n$, this would be possible but is
not interesting in view of the simple characterization in Theorem~\ref{thm-reg-p>n}.)

Finally, in Section~\ref{sec-Ahlfors}, 
we generalize Theorem~\ref{thm-reg-p>n} to unbounded complete  metric spaces
equipped with an Ahlfors $Q$-regular measure supporting
a \p-Poincar\'e inequality, where $p > Q$.
Instead of the circular inversion, we sphericalize these spaces into bounded ones. 
In addition to various fractal examples, such as the 
Laakso spaces~\cite[Theorem~2.7]{Laakso},
this generalization also covers 
certain complete Riemannian  manifolds.

\begin{ack}
A.~B. resp.\ J.~B. were supported by the Swedish Research Council,
  grants 2020-04011 and 2024-04095 resp.\ 2022-04048. 
\end{ack}

\section{Sobolev spaces and capacities} 
\label{sob-cap}

\emph{In Sections~\ref{sob-cap}--\ref{A-harm-reg}, we let
$\Om \subset \Rn$, $n \ge 2$, be
an open set and 
$1<p < \infty$. 
We also assume that $w$ is a \p-admissible weight as in 
Heinonen--Kilpel\"ainen--Martio\/~\textup{\cite{HeKiMa}}.
}

\medskip

Even though our original problem about boundary regularity is formulated 
in unweighted $\Rn$,
after the circular inversion, we will need to consider $\R^n$ equipped with the weighted measure 
$|x|^{\de}\,dx$, where $\de=2(p-n)$ and $p \ge n$.
We therefore
introduce the necessary notions for weighted $\R^n$ and the measure $w(x)\,dx$, 
where $w$ is
a \p-admissible weight as in Heinonen--Kilpel\"ainen--Martio~\cite{HeKiMa}.
The unweighted case $w \equiv 1$ and the above power weight 
\begin{equation} \label{power-weight}
    w(x) = |x|^\de
\end{equation}
are included as special cases.
With our choice of exponent $\de=2(p-n)$, the weight $w$ is a Muckenhoupt $A_p$-weight 
(since $-n < \de < n(p-1)$) and is therefore \p-admissible, see \cite[p.~10]{HeKiMa}.
By a slight abuse of notation, we write 
\begin{equation*}
   w(E) = \int_{E} w \,dx
  \quad \text{for measurable sets } E.
\end{equation*}

For every function $\phi \in C^\infty(\Om)$, we let
\begin{equation*}
    \| \phi \|_{H^{1, p}(\Om,w)} = 
    \| \phi \|_{L^p(\Om,w)} +  \| \grad \phi \|_{L^p(\Om,w)},
\end{equation*}
where
\begin{equation*}
    \| \phi \|_{L^p(\Om,w)} = 
    \biggl( \int_{\Om} |\phi|^p w \,dx \biggr)^{1/p}
\end{equation*}
denotes the $L^p(\Om, w)$-norm. 
The \emph{weighted Sobolev space} $H^{1,p}(\Om, w)$ consists of the functions 
$u \in L^p(\Om, w)$ for which there exists a function $v \in L^p(\Om, w)$ and a 
sequence $\phi_j \in C^\infty(\Om)$ such that 
\begin{equation} \label{eq-def-grad-u}
    \| \phi_j - u \|_{L^p(\Om,w)} \to 0
    \quad \text{and} \quad 
    \| \grad \phi_j - v \|_{L^p(\Om,w)} \to 0,
    \qquad \text{as } j \to \infty. 
\end{equation}
Since $w$ is \p-admissible, $v$ is uniquely defined in $L^p(\Om, w)$ 
by Axiom~II in \cite[p.~7]{HeKiMa} (or Theorem~20.4 in \cite{HeKiMa}).
In addition, if $u$ is locally Lipschitz, then Lemma~1.11 in \cite{HeKiMa} shows that 
$v$ coincides with the classical gradient $\grad u$ a.e.\ on $\Om$. 
We therefore
write $\grad u = v$ and find that \eqref{eq-def-grad-u} is equivalent to 
$\| \phi_{j} - u \|_{H^{1, p}(\Om,w)} \to 0$ as $j \to \infty$.
Theorem~1.20 in \cite{HeKiMa} shows that
\begin{equation}  \label{eq-grad-max}
\grad \max\{u_1,u_2\} = \begin{cases}
    \grad u_1, & \text{if } u_1 \ge u_2, \\
    \grad u_2, & \text{if } u_1 \le u_2.   \end{cases}
\end{equation}

By $\Hp_{0}(\Om,w)$ we denote the closure of 
$C_{0}^\infty(\Om)$ in $H^{1,p}(\Om,w)$. 
Furthermore, the space $\Hploc(\Om, w)$ consists of all functions 
$u \in L_{\mathrm{loc}}^p(\Om, w)$ such that $u|_G \in \Hp(G, w)$ 
for every open subset $G \Subset \Om$. 
As usual,
$G \Subset \Om$ means that $\clG$ is a compact subset of $\Om$,
and $C_{0}^\infty(\Om)=\{u \in C^\infty(\Rn): \supp u \Subset \Om \}$.
For unweighted $\R^n$, i.e.\ when $w \equiv 1$,
we simplify the notation by suppressing $w$ 
and write $\Hp(\Om)$, $\Hp_{0}(\Om)$ and $\Hploc(\Om)$.

\begin{definition} \label{admis}
Let $K \subset \Om$ be compact.
We define the \emph{condenser $(p,w)$-capacity} for $K$ in $\Om$ as
\begin{equation}   \label{def-cap-Cinfty}
        \cpw(K,\Om) :=
        \inf           \int_{\Om} |\grad u|^p w \,dx,
\end{equation}
where the infimum is taken over all $u\in C^\infty_0(\Om)$ with $u|_K\ge1$.
\end{definition}

The capacity can be extended to open sets and subsequently to arbitrary sets 
in a standard way, see \cite[p.\ 27]{HeKiMa}.
However, in this paper, the definition for compact sets will suffice. 
Note that by \cite[pp.\ 27--28]{HeKiMa}, the infimum in~\eqref{def-cap-Cinfty} 
can equivalently be taken 
over all 
\begin{equation} \label{eq-Wp}
u \in W_p(K,\Om,w):=\{u \in \Hp_0(\Om,w) \cap C(\Om) : u|_K\ge1\}.
\end{equation}
We say that a function $u$ is \emph{admissible} for $\cpw(K,\Om)$
if $u \in W_p(K,\Om,w)$.
Theorem~2.2 in \cite{HeKiMa} states that the capacity $\cpw$ is monotone 
in both arguments (increasing in the first and decreasing in the second),
and it is countably subadditive in the first argument. 
For unweighted $\R^n$  we write $\cp$ instead of $\cpw$.

As in \cite{HeKiMa}, for closed $F \subset \Rn$, we say that $\cpw F = 0$ if 
\begin{equation*} 
  \cpw(F \cap \clB_r,B_{2r}) = 0 
     \quad \text{for every } r>0.
\end{equation*}
Otherwise we write $\cpw F > 0$.
Note that Lemmas~2.8 and~2.9 in~\cite{HeKiMa} ensure that this definition 
is equivalent to the one in Section~2.7 of \cite{HeKiMa}.

In unweighted $\Rn$, the capacity $\cp(\clB_r, B_R)$   
can be computed explicitly for all $0 \le r < R$,
see Example~2.12 in \cite{HeKiMa}.
For our purposes, the following estimates will be sufficient.
By $a \simle b$ we mean that $a \le Cb$ for some constant $C > 0$, 
which we refer to as a \textit{comparison constant}. 
The notation $b \simge a$ and $a \simeq b$ should be interpreted as 
$a \simle b$ and $a \simle b \simle a$, respectively. 
By $B(x,r)$ we  denote the open ball 
centred
at $x$ with radius~$r$.

\begin{lemma}\label{lem-caplog} 
\textup{(\cite[Example~2.12, Lemma~2.14 and (6.40)]{HeKiMa})}
Let $x\in\Rn$ and $0 < r < R$.
Then
\begin{equation} \label{eq-caplog}
    \cpw(\itoverline{B(x,r)}, B(x,2r)) 
    \simeq  r^{-p} w(B(x,r)),
\end{equation}
where the comparison constants depend only on $n$, $p$ and $w$. 

In the unweighted case,
we have that
\begin{alignat}{2}
 \cp(\itoverline{B(x,r)}, \Rn)  &= 0,
&\quad &  \text{if } p\ge n, \label{eq-caplog-p>=n}\\
    \cp(\{x\}, B(x,R)) &\simeq R^{n-p}, &\quad &  \text{if } p>n,  \label{eq-caplog-p>n}\\
    \cn(\itoverline{B(x,r)}, B(x,R))
    &\simeq
    \biggl( \log \frac{R}{r} \biggr)^{1-n}, 
    &\quad &\text{if } p = n,
\label{eq-caplog-n}
\end{alignat}
where the comparison constants depend only on $n$ and $p$.
\end{lemma}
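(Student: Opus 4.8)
The plan is to deduce all four relations from the single explicit computation of the variational capacity of a pair of concentric balls, combined with monotonicity of the capacity, translation invariance, and the doubling property of $w$.

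First I would recall the classical radial computation in unweighted $\Rn$ (\cite[Example~2.12]{HeKiMa}): the capacitary potential for $\itoverline{B(x,r)}$ in $B(x,R)$ is the radial \p-harmonic function, equal on the annulus to a multiple of $|y-x|^{(p-n)/(p-1)}$ (or of $\log|y-x|$ when $p=n$), and integrating its gradient gives, for $0<r<R$,
\[
\cp(\itoverline{B(x,r)},B(x,R))=c_{n,p}
\begin{cases}
\bigl|r^{(p-n)/(p-1)}-R^{(p-n)/(p-1)}\bigr|^{1-p}, & p\ne n,\\
\bigl(\log(R/r)\bigr)^{1-n}, & p=n.
\end{cases}
\]
Granting this, \eqref{eq-caplog-n} is immediate, and \eqref{eq-caplog-p>n} follows by letting $r\to0$ (the term $r^{(p-n)/(p-1)}\to0$ since $p>n$), which yields $\cp(\{x\},B(x,R))=c_{n,p}R^{n-p}$, using that the condenser capacity of compact sets is continuous under decreasing limits. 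For \eqref{eq-caplog-p>=n} I would use that $\cp$ is decreasing in its second argument, so $\cp(\itoverline{B(x,r)},\Rn)\le\cp(\itoverline{B(x,r)},B(x,R))$ for every $R$; letting $R\to\infty$, the right-hand side tends to $0$ precisely when $p\ge n$, so the capacity vanishes.

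For the weighted estimate \eqref{eq-caplog} I would argue by matching one-sided bounds, reducing to $x=0$ by translation invariance. The upper bound is a test-function estimate: picking $u\in C_0^\infty(B(x,2r))$ with $u\equiv1$ on $\itoverline{B(x,r)}$, $0\le u\le1$ and $|\grad u|\le C/r$ gives $\cpw(\itoverline{B(x,r)},B(x,2r))\le (C/r)^p w(B(x,2r))$, and since $w$ is an $A_p$-weight it is doubling, so $w(B(x,2r))\simeq w(B(x,r))$. The lower bound is the only point that is not a bare computation: any admissible $u$ equals $1$ on $B(x,r)$ but vanishes outside $B(x,2r)$, hence has a zero set of $w$-measure $\simeq w(B(x,r))$ inside $B(x,2r)$ (e.g.\ on $B(x,2r)\setm B(x,3r/2)$, using doubling again), and the $(1,p)$-Poincar\'e inequality that holds because $w$ is \p-admissible then forces $\int_{B(x,2r)}|\grad u|^p w\,dx\simge r^{-p}w(B(x,r))$. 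This is exactly \cite[Lemma~2.14]{HeKiMa} (with the power-weight case recorded in (6.40) there), so I would cite it rather than reprove it.

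The main obstacle is this lower bound in \eqref{eq-caplog}: it is the single place where test functions do not suffice and one genuinely needs the Poincar\'e inequality, i.e.\ the \p-admissibility of $w$. Everything else reduces to the explicit radial formula, monotonicity of $\cpw$, and doubling of $w$, none of which presents any difficulty.
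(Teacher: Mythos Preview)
The paper does not prove this lemma at all: it is stated as a citation, with the attribution ``\textup{(\cite[Example~2.12, Lemma~2.14 and (6.40)]{HeKiMa})}'' placed directly in the statement, and no proof environment follows. Your sketch is therefore more than the paper offers, and its mathematical content is sound; the explicit radial formula does yield \eqref{eq-caplog-n}, \eqref{eq-caplog-p>n} and \eqref{eq-caplog-p>=n} exactly as you describe, and the test-function/Poincar\'e argument is precisely how \cite[Lemma~2.14]{HeKiMa} is proved.

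Two minor slips are worth flagging. First, ``reducing to $x=0$ by translation invariance'' is wrong in the weighted setting: a general \p-admissible weight $w$ (and in particular the power weight $|x|^\de$ used later in the paper) is not translation invariant, so neither is $\cpw$. Fortunately you never actually use this reduction---both the test-function upper bound and the Poincar\'e lower bound work directly at an arbitrary centre $x$, with the position dependence absorbed into the factor $w(B(x,r))$. Second, you justify doubling by saying ``$w$ is an $A_p$-weight''; the paper assumes only that $w$ is \p-admissible in the sense of~\cite{HeKiMa}, which is strictly more general. Doubling is nonetheless available, since it is one of the axioms for \p-admissibility, so only the stated reason is off, not the conclusion.
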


For every set $E \subset \Rn$ we let 
\begin{equation*}
    \bdystar E =  
        \begin{cases} 
            \bdy E\cup \{\binfty\}, &\text{if $E$  is unbounded},  \\
            \bdy E, &\text{otherwise},   
    \end{cases} 
\end{equation*}
denote the extended boundary of $E$. 
Moreover, as usual we write $\clE = E \cup \partial E$ and $E^c=\Rn \setm E$.

\section{\texorpdfstring{$\A$}{A}-harmonic functions and boundary regularity}
\label{A-harm-reg}

\emph{As in Section~\ref{sob-cap}, we assume  that 
$\Om \subset \Rn$, $n \ge 2$, is 
an open set 
and that $w$ is a \p-admissible weight, $1< p < \infty$.}

\medskip

As in 
Heinonen--Kilpel\"ainen--Martio~\cite{HeKiMa}, 
we consider weak solutions of the equation
\begin{equation*} 
    \Div\A(x, \grad u(x)) = 0,
    \quad x \in \Om, 
\end{equation*}
where the map $\A:\Rn\times\Rn\to\Rn$
satisfies the conditions \ref{A1}--\ref{A3}
below with a \p-admissible weight $w$.
\begin{enumerate}
\renewcommand{\theenumi}{\textup{(A\arabic{enumi})}}
    \item \label{A1}
    $\A(\cdot, q)$ is measurable for all $q \in \Rn$.
    \item \label{A2}
    $\A(x, \cdot)$ is continuous for a.e.\ $x \in \Rn$.
    \item \label{A3}
    There exist constants $\al_2 \ge \al_1 > 0$
    such that for a.e.\ $x \in \Rn$, 
    all $\lambda \in \R \setminus \{0\}$ 
    and all $q, q'  \in \Rn$ with 
   $q \neq q'$, 
the following hold:
    \begin{align}
        \A(x, q) \cdot q &\ge \al_1 w(x) |q|^p, 
        \label{cond1} \\
        |\A(x, q)| &\le \al_2 w(x) |q|^{p-1}, 
        \label{cond2} \\
        \A(x, \lambda q) &= \lambda|\lambda|^{p-2}\A(x, q), 
        \label{cond3} 
    \end{align}
    and 
    \begin{equation} 
\label{cond4}
       (\A(x, q) - \A(x, q')) \cdot (q - q')  > 0.
    \end{equation}
\end{enumerate}

In the subsequent sections, the map $\A$ will be 
associated with the unweighted case
$w(x) \equiv 1$. 
Here we give a more general weighted definition, 
because in Section~\ref{circ-inv} we introduce a similar map $\B$ with the
power weight $w(x)=|x|^{2(p-n)}$ as
in~\eqref{power-weight}.

\begin{definition} \label{A-harm}
A function $u \in H^{1, p}_{\text{loc}}(\Om,w) \cap C(\Om)$ is $\A$-\emph{harmonic} 
if it is a weak solution to the equation $\Div \A(x, \grad u) = 0$, i.e.
\begin{equation*}
    \int_{\Om}  \A(x, \grad u) \cdot \grad \phi \,dx = 0
    \quad \text{for every } \phi \in C_0^\infty(\Om).
\end{equation*}
\end{definition}

\begin{definition} \label{A-sup}
A function $u: \Om \to (-\infty, \infty]$ is $\A$-\emph{superharmonic} 
if the following conditions are satisfied: 
\begin{itemize}
    \item $u$ is lower semicontinuous, 
    \item $u \not\equiv \infty$ in each component of $\Om$, 
    \item for each open set $G \Subset \Om$ and all functions 
     $v \in C(\clG)$ that are $\A$-harmonic in $G$,
     we have $v \le u$ in $G$ whenever $v \le u$ on $\partial G$. 
\end{itemize}
\end{definition}

In this paper, we only need to solve the Dirichlet 
problem for continuous boundary data.
For simplicity we therefore restrict the definition of Perron solutions to such functions.
    
\begin{definition} \label{deff-Perron}
For every $f \in C(\bdystar \Om)$ we denote by $\UU(f, \A, \Om)$ the set 
consisting of all $\A$-superharmonic functions $u$ on $\Om$ 
such that 
\begin{equation}   \label{eq-Perron-liminf}
   \liminf_{x \to z} u(x) \ge f(z) 
   \quad \text{for all } z \in \bdystar \Om.
\end{equation}

The function 
\begin{equation} \label{eq-uP}
    \uP f = \uP^{\A}_\Om f = 
    \inf_{u \in \UU(f, \A, \Om)} u
\end{equation}
is called the \textit{upper Perron solution} of $f$ with respect to $\A$ and $\Om$. 

The lower Perron solution is defined analogously using 
$\A$-subharmonic functions or simply by letting $\lP f=- \uP (-f)$.
The function $f$ is \emph{resolutive} if $\lP f = \uP f$.
\end{definition}

In unweighted $\Rn$ with $p \ge n$ and $\cp \Omc > 0$ 
(which is always true when
$p>n$ and $\Om \subsetneq \Rn$)
it follows from Proposition~\ref{prop-Perron-w/o-infty}  
that the requirement~\eqref{eq-Perron-liminf}
can for $z=\binfty$ be replaced by 
\[
  \liminf_{x \to \binfty} u(x) >-\infty.
\]

If $\cpw \Omc > 0$, then every continuous function on $\bdyhat\Om$ is 
resolutive by Theorem~9.25 in~\cite{HeKiMa}. 
However, when $\cpw \Omc = 0$, this is generally false,
see the discussion after Theorem~9.25 in~\cite{HeKiMa}.
We therefore define regular boundary points 
using upper Perron solutions
as follows.

\begin{definition} \label{A-reg}   
A point $x_0 \in \bdystar \Om$ is $\A$-\emph{regular} 
(with respect to $\Om$) 
if
\begin{equation*}
    \lim_{x \to x_0} \uP f(x) = f(x_0)
    \quad \text{for all } f \in C(\bdystar \Om).
\end{equation*}
\end{definition}

The following theorem is the so-called
\emph{Wiener criterion} for boundary regularity, 
named after Norbert Wiener~\cite{Wiener1924} 
who proved it in the unweighted case for $p = 2$.

\begin{theorem}\label{thm-Wiener}
\textup{(The Wiener criterion)}
Assume
that $\Om \subsetneq \Rn$ 
and 
let $x \in \partial \Om$ be a finite boundary point. 
Then $x$ is $\A$-regular if and only if
\begin{equation} \label{eq-Wiener-1}
    \int_{0}^{1} \biggl( 
    \frac{\cpw(\Om^c \cap \itoverline{B(x,r)}, B(x,2r)}
         {\cpw(\itoverline{B(x,r)}, B(x,2r))} 
    \biggr)^{1/(p-1)} \,\frac{dr}{r} = \infty.
    \end{equation}    
\end{theorem}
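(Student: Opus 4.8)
This is the classical Wiener criterion, so the plan is to reproduce, in the weighted setting of~\cite{HeKiMa}, the argument of Maz'ya~\cite{Mazya} for sufficiency together with that of Kilpel\"ainen--Mal\'y~\cite{KiMa94} for necessity (in the range $p>n-1$ one may instead invoke Lindqvist--Martio~\cite{LM85}). After a translation assume $x=0$, and set $E_r=\Omc\cap\clB_r$ and
\[
  \kappa(r)=\biggl(\frac{\cpw(E_r,B_{2r})}{\cpw(\clB_r,B_{2r})}\biggr)^{1/(p-1)}.
\]
Since $r\mapsto\kappa(r)^{p-1}$ is comparable on each dyadic interval $(2^{-j-1},2^{-j})$ — using monotonicity and subadditivity of $\cpw$ together with $\cpw(\clB_r,B_{2r})\simeq r^{-p}w(B_r)$ from~\eqref{eq-caplog} — the divergence of the integral in~\eqref{eq-Wiener-1} is equivalent to $\sum_{j\ge1}\kappa(2^{-j})=\infty$. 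The whole argument then rests on a single-scale estimate for $\A$-(super)harmonic functions: \emph{if $v$ is bounded and $\A$-harmonic in $\Om\cap B_{2r}$ with $v\le M$ there and $\limsup_{y\to z}v(y)\le0$ for all $z\in\bdy\Om\cap\clB_{2r}$, then $\sup_{\Om\cap B_r}v\le M\bigl(1-c\,\kappa(r)\bigr)$, and symmetrically for infima}, with $c>0$ depending only on $n$, $p$, $w$ and the ellipticity constants. I would derive this from the Caccioppoli and Sobolev inequalities and the weak Harnack inequality for supersolutions (all available for $p$-admissible $w$) via Maz'ya's energy iteration; for the necessity direction I would instead use the pointwise Wolff-potential bounds of~\cite{KiMa94}, which give $u_{E_r}(0)\simle\kappa(r)$ for the $\A$-capacitary potential $u_{E_r}$ of $E_r$ in $B_{2r}$.

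\emph{Sufficiency.} Suppose the integral diverges. Fix $f\in C(\bdystar\Om)$ and $\eps>0$, and choose $R$ so that $f$ oscillates by less than $\eps$ on $\bdy\Om\cap B_R$. Applying the single-scale estimate to $\pm(\uP f-c)$ on the shells $B_{2^{-j}R}$ and using the comparison principle to propagate the bound inward, one finds that the oscillation of $\uP f$ over $\Om\cap B_{2^{-j}R}$ is at most
\[
  \eps+C\|f\|_{\infty}\prod_{i=1}^{j}\bigl(1-c\,\kappa(2^{-i}R)\bigr)
  \le\eps+C\|f\|_{\infty}\exp\Bigl(-c\sum_{i=1}^{j}\kappa(2^{-i}R)\Bigr),
\]
which tends to $\eps$ as $j\to\infty$. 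Hence $\uP f$ has a limit at $0$; that this limit equals $f(0)$ follows from the matching lower capacitary estimate (or, when $\cpw\Omc>0$, directly from resolutivity, \cite[Thm~9.25]{HeKiMa}). Thus $0$ is $\A$-regular.

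\emph{Necessity.} Suppose the integral converges, so $\sum_j\kappa(2^{-j})<\infty$. Put $F_j=\Omc\cap(\clB_{2^{-j}}\setm B_{2^{-j-1}})$ and let $u_j$ be its $\A$-capacitary potential in a fixed dilate $B_{2^{-j+2}}\setm\clB_{2^{-j-3}}$; by the upper Wolff bound $u_j(0)\simle\kappa(2^{-j})$. Choosing weights $c_j>0$ with $\sum_j c_j u_j(0)<\infty$ but $c_j$ large along a sparse set of indices, the function $u=\sum_j c_j u_j$ is $\A$-superharmonic near $0$, is finite at $0$, yet stays $\ge\de>0$ on $\Omc$ arbitrarily close to $0$. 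Truncating $u$ and using it as a comparison function, one produces $f\in C(\bdystar\Om)$ with $f(0)=0$ but $\liminf_{y\to0}\uP f(y)\ge\de'>0$, i.e.\ $0$ is not $\A$-regular. (That the sum controls $u$ on $\Omc$ from below while remaining finite at $0$ is exactly what convergence of the Wiener integral permits; the superadditivity needed to sum the capacitary potentials is furnished by the Wolff-potential formalism of~\cite{KiMa94}.)

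\emph{Main obstacle.} The genuinely hard input is the \emph{lower} pointwise capacitary (Wolff-potential) estimate for supersolutions used in the necessity part — the ingredient that delayed a complete nonlinear Wiener criterion until~\cite{KiMa94}, and whose proof requires a delicate nonlinear iteration. On the sufficiency side the analogous difficulty is the precise single-scale oscillation-decay estimate together with the bookkeeping of the infinite product; the remaining steps (the dyadic comparison of capacities, the weight-uniform Caccioppoli, Sobolev and weak Harnack inequalities, and the passage from the capacity ratio to $\sum_j\kappa(2^{-j})$) are technical but routine given~\cite{HeKiMa}.
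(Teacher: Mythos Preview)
The paper does not supply its own proof of this theorem; it is quoted as a known result, with sufficiency attributed to Maz'ya (unweighted) and Heinonen--Kilpel\"ainen--Martio \cite[Theorem~6.18]{HeKiMa} (weighted), and necessity to Kilpel\"ainen--Mal\'y~\cite{KiMa94} and, in the weighted setting, Mikkonen~\cite[Theorem~5.2]{Mikkonen}; see also \cite[Theorems~6.33 and~21.30]{HeKiMa}. So there is nothing to compare against except whether your sketch is itself sound.

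Your sufficiency outline follows Maz'ya's scheme and is broadly reasonable (with the usual caveat that $-\uP f$ is not $\A$-harmonic but $\A^{*}$-harmonic for a structurally equivalent operator, which does not affect the estimate).

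The necessity sketch, however, has a genuine gap. You form $u=\sum_j c_j u_j$ with $u_j$ the $\A$-capacitary potentials of dyadic shells and assert that $u$ is $\A$-superharmonic near $0$. For $p\ne2$ the equation is nonlinear, and a sum of $\A$-superharmonic functions is in general \emph{not} $\A$-superharmonic; the Wolff-potential formalism provides pointwise bounds, not any ``superadditivity'' at the level of the equation. (There is also a domain issue: your $u_j$ is defined on an annulus that excludes $0$, so $u_j(0)$ is not even defined as written.) The actual Kilpel\"ainen--Mal\'y/Mikkonen argument avoids superposition entirely: one considers the \emph{single} $\A$-capacitary potential $u$ of $\Om^c\cap\clB_R$ in $B_{2R}$ and invokes the pointwise \emph{upper} Wolff-potential estimate
\[
u(y)\ \simle\ \int_0^{2R}\biggl(\frac{\cpw(\Om^c\cap\itoverline{B(y,r)},B(y,2r))}{\cpw(\itoverline{B(y,r)},B(y,2r))}\biggr)^{1/(p-1)}\,\frac{dr}{r}.
\]
Convergence of the Wiener integral at $x_0$ then forces $\limsup_{y\to x_0}u(y)<1$, and via the barrier characterization of regularity \cite[Theorems~6.27 and~9.8]{HeKiMa} this shows that $x_0$ is irregular. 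No linear combination of solutions is needed---or available---in the nonlinear setting.
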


In the unweighted case, sufficiency 
is due to
Maz'ya~\cite[Theorem, p.~236]{Mazya} and necessity 
to Lindqvist--Martio~\cite{LM85} (for $p>n-1$) and
Kilpel\"ainen--Mal\'y~\cite[Theorem~1.1]{KiMa94} (all $p>1$). 

For this weighted version,
sufficiency 
is due to
Heinonen--Kilpel\"ainen--Martio~\cite[Theorem~6.18 (also in the first edition from 1993)]{HeKiMa}
and necessity to
Mikkonen~\cite[Theorem~5.2]{Mikkonen}.
(These results are formulated for bounded $\Om$, but by 
\cite[Theorem~9.8 and Proposition~9.9 (also in the first edition)]{HeKiMa}
the unbounded case is also covered.)
A proof 
can also be found 
in (the second edition of)~\cite[Theorems~6.33 and~21.30]{HeKiMa}.

It follows from Lemma~2.16 in~\cite{HeKiMa}
that the Wiener condition~\eqref{eq-Wiener-1} is
equivalent to the Wiener conditions in the above papers and monograph.
In view of 
\eqref{eq-caplog}, 
it can also
equivalently be formulated 
as
\begin{equation} \label{eq-Wiener-alt} 
    \int_{0}^{1}   
    \biggl(\frac{\cpw(\Om^c \cap \itoverline{B(x,r)}, B(x,2r)}{r^{-p} w(B(x,r))}
    \biggr)^{1/(p-1)} 
    \,\frac{dr}{r} = \infty.
\end{equation}

Theorem~\ref{thm-Wiener} is for finite boundary points $x \in \bdy \Om$.
The purpose of this paper is to obtain an analogous result 
for $\binfty \in \bdystar \Om$ in the unweighted nonlinear case.

\section{Circle inversion} 
\label{circ-inv}

\emph{In Sections~\ref{circ-inv}--\ref{sec-ex}, we assume that
$2 \le n \le p < \infty$
and let
$\A$ denote a map satisfying the 
assumptions~\textup{\ref{A1}--\ref{A3}} 
stated at  the beginning of Section~\ref{A-harm-reg} with $w \equiv 1$.
In addition, the notation~$w$ is henceforth~reserved for the weight 
$w(\xi)=|\xi|^{2(p-n)}$, 
and $\Om \subset \Rn \setm \{0\}$ shall be an unbounded open set.}

\medskip

The results in this section (except for Proposition~\ref{prop-Perron-w/o-infty})
hold whenever $p>\tfrac12 n$.
For $p \le \tfrac12 n$ the weight $|\xi|^{2(p-n)}$ is not
integrable around the origin, and thus this case 
cannot be included here.

We are interested in boundary regularity of the infinity point $\binfty$. 
For this reason, the following circular inversion will be a convenient tool: 
Let $T: \Rn \setminus \{0\} \to \Rn \setminus \{0\}$ denote the map
\begin{equation*}
    T(x) = 
    \frac{x} {|x|^{2}}.
\end{equation*}
We extend this map to $\Rn \cup \{\binfty\}$ by letting
\begin{equation*}
    T(0) = \fat{\infty} \quad \text{and} \quad 
    T(\fat{\infty}) = 0.
\end{equation*}
Note that $T^{-1} = T$. 
For the reader's convenience, we now recall some elementary properties of $T$.
The Jacobian matrix of $T$ at a point $x \in \Rn \setminus \{0\}$ is given by
\begin{equation*}
    dT(x) :=
    \frac{1}{|x|^{4}}
    \begin{pmatrix}
        |x|^2 - 2 x_1^2 & -2 x_1 x_2      & \cdots & -2 x_1 x_n \\
        -2 x_1x_2       & |x|^2 - 2 x_2^2 & \cdots & -2 x_2x_n \\
        \vdots          & \vdots          & \ddots & \vdots \\
        - 2 x_1 x_n     &  -2 x_2 x_n     & \cdots & |x|^2 - 2 x_n^2
    \end{pmatrix}
    \hspace{-0.4em}.
\end{equation*}
Let $J_T(x)$ denote the determinant of $dT(x)$.

\begin{lemma}   \label{lem-op-norm}
For all $x \in \Rn \setminus\{0\}$ and $q \in \Rn$ we have that
\begin{equation} \label{op-norm}
    |dT(x)q| = \frac{|q|}{|x|^2}.
\end{equation}
\end{lemma}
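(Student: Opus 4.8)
The plan is to compute $dT(x)q$ explicitly and then take its Euclidean norm. Writing $r=|x|$, the matrix $dT(x)$ has the form $r^{-4}(r^2 I - 2xx^{\mathsf T})$, where $xx^{\mathsf T}$ is the rank-one matrix with entries $x_ix_j$. Hence
\begin{equation*}
  dT(x)q = \frac{1}{r^4}\bigl(r^2 q - 2 (x\cdot q)\,x\bigr).
\end{equation*}
So the first step is just to record this identity; it follows immediately from the displayed formula for $dT(x)$ by reading off the action on a vector.

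Next I would square the norm. Using $|r^2 q - 2(x\cdot q)x|^2 = r^4|q|^2 - 4r^2(x\cdot q)^2 + 4(x\cdot q)^2|x|^2$ and $|x|^2=r^2$, the middle and last terms cancel, leaving $r^4|q|^2$. Therefore
\begin{equation*}
  |dT(x)q|^2 = \frac{1}{r^8}\cdot r^4 |q|^2 = \frac{|q|^2}{r^4},
\end{equation*}
and taking square roots gives $|dT(x)q| = |q|/r^2 = |q|/|x|^2$, which is exactly \eqref{op-norm}.

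An alternative, essentially equivalent route is to observe that $dT(x)$ is (up to the scalar $r^{-2}$) the Householder-type reflection $I - 2\hat x\hat x^{\mathsf T}$ with $\hat x = x/|x|$, which is orthogonal and hence norm-preserving; multiplying by the scalar $r^{-2}$ then scales every norm by $r^{-2}$. I would probably present the direct computation since it is short and self-contained, but might mention the reflection interpretation as a remark.

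There is really no obstacle here: the only thing to be a little careful about is the bookkeeping of the powers of $|x|$ (the $r^{-4}$ in $dT$ becomes $r^{-8}$ after squaring, and is cancelled by the $r^4$ coming from expanding the squared norm), and the fact that the cross term $-4r^2(x\cdot q)^2$ exactly cancels the term $+4(x\cdot q)^2|x|^2$ because $|x|^2 = r^2$. Once that cancellation is noted, the result is immediate.
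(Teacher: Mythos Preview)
Your proposal is correct and follows essentially the same direct computation as the paper: expand $|dT(x)q|^2$ and observe that the cross terms cancel. The only difference is cosmetic---you use the compact matrix form $dT(x)=r^{-4}(r^{2}I-2xx^{\mathsf T})$ while the paper writes out the coordinate sums---and your Householder-reflection remark is a nice additional interpretation not present in the paper.
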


\begin{proof}
The square of the left-hand side equals
\begin{align*}
    |dT(x)q|^{2}
    &= 
    \sum_{i=1}^{n} \biggl(
        \frac{q_{i}}{|x|^{2}} -
        \frac{2 x_{i}}{|x|^{4}}
        \sum_{j=1}^{n} x_{j} q_{j} 
    \biggr)^{2} \nonumber \\
    &= 
    \sum_{i=1}^{n} \biggl(
        \frac{q_i^{2}}{|x|^{4}} -
        \frac{4 x_{i} q_{i}}{|x|^{6}} 
        \sum_{j = 1}^n x_{j} q_{j} + 
            \frac{4 x_i^{2}}{|x|^{8}}
            \biggl( \sum_{j=1}^{n} x_{j} q_{j} \biggr)^{2}
    \biggr) \nonumber 
    = 
    \frac{|q|^{2}}{|x|^{4}}. 
\qedhere
\end{align*}
\end{proof}

\begin{corollary} \label{cor-det}
For each $x \in \Rn \setminus \{0\}$ we have that
\begin{equation} \label{det}
    |J_T(x)| = |x|^{-2 n}.
\end{equation}
\end{corollary}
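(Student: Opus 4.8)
The plan is to deduce this immediately from Lemma~\ref{lem-op-norm}. That lemma says $|dT(x)q| = |q|/|x|^2$ for every $q \in \Rn$, i.e.\ the linear map $dT(x)$ multiplies the length of every vector by the fixed factor $1/|x|^2$. Hence $|x|^2\,dT(x)$ preserves all norms, and therefore (by polarization, since preserving every norm forces preserving the inner product) its matrix is orthogonal, with determinant $\pm1$. Consequently
\begin{equation*}
    J_T(x) = \det dT(x) = \Bigl(\tfrac{1}{|x|^2}\Bigr)^{n}\det\bigl(|x|^2\,dT(x)\bigr) = \pm\,|x|^{-2n},
\end{equation*}
and taking absolute values gives $|J_T(x)| = |x|^{-2n}$.

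Alternatively, I would give a short direct computation that also fixes the sign: writing $\hat x = x/|x|$, one reads off from the displayed Jacobian matrix that $dT(x) = |x|^{-2}(I - 2\hat x\hat x^{\top})$. The matrix $I - 2\hat x\hat x^{\top}$ is the Householder reflection in the hyperplane orthogonal to $\hat x$; it fixes that hyperplane pointwise and negates the line $\R\hat x$, so its determinant equals $-1$. Thus $J_T(x) = |x|^{-2n}\cdot(-1) = -|x|^{-2n}$, which again yields \eqref{det}.

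There is no genuine obstacle here; the corollary is essentially a restatement of Lemma~\ref{lem-op-norm}. The only point worth a sentence of justification is the step from ``$dT(x)$ scales all lengths by $1/|x|^2$'' to ``$\det dT(x)$ has modulus $|x|^{-2n}$'', which is precisely the polarization argument (or, concretely, the Householder computation) indicated above.
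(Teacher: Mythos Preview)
Your proof is correct and, like the paper's, deduces the result directly from Lemma~\ref{lem-op-norm}. The paper's argument differs slightly in the justification of the key step: it observes that $dT(x)$ is a real symmetric matrix (visible from the explicit formula), hence has real eigenvectors, and Lemma~\ref{lem-op-norm} then forces every eigenvalue to have modulus $|x|^{-2}$; the determinant is their product. Your polarization/orthogonality argument reaches the same conclusion without invoking symmetry, which is a minor but genuine simplification (it would work for any linear map scaling all norms by a fixed factor). Your Householder alternative is a pleasant extra that neither the paper nor your first argument provides, namely the actual sign $J_T(x)=-|x|^{-2n}$; the paper does not compute this.
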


\begin{proof}
Since $dT(x)$ is a real symmetric matrix, it has real eigenvectors.
Lemma~\ref{lem-op-norm} therefore ensures that each eigenvalue 
$\lambda$ of $dT(x)$ satisfies $|\lambda| = |x|^{-2}$.  
Moreover, the determinant $J_T(x)$ is the product of these eigenvalues, 
and so $|J_T(x)| = |x|^{-2 n}$.
\end{proof}

The rest of this section is devoted to showing that,
under the inversion $T$, 
the equation $\Div\A(x, \grad u(x)) = 0$ is transformed into  
$\Div\B(\xi, \grad \ut(\xi)) = 0$, where $\xi=T(x)$, $\ut=u \circ T$ 
and the map $\B: \Rn \times \Rn \to \Rn$ is defined by
\begin{equation*}  
    \B(\xi, q) =
    \begin{cases} 
        |J_T(x)|^{-1} dT(x)\A(x, dT(x)q),    
        &\text{if } \xi \neq 0, \\
        0,
        &\text{if } \xi = 0.
    \end{cases}
\end{equation*}
We begin by showing that $\B$ satisfies the assumptions~\ref{A1}--\ref{A3} with 
the weight
\begin{equation*}
    w(\xi) = |\xi|^{2(p-n)}.
\end{equation*}
Recall that $w$ is \p-admissible 
by the initial remarks of Section~\ref{sob-cap}.

\begin{lemma} \label{lem-B-conds}
The map $\B$ satisfies the assumptions~\textup{\ref{A1}--\ref{A3}} 
from Section~\ref{A-harm-reg} with 
the weight $w$. 
\end{lemma}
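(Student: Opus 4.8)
The goal is to verify conditions \ref{A1}--\ref{A3} for $\B$. The strategy is to unwind the definition of $\B$ and use the two geometric facts already established: Lemma~\ref{lem-op-norm} (that $dT(x)$ acts as an isometry scaled by $|x|^{-2}$) and Corollary~\ref{cor-det} (that $|J_T(x)| = |x|^{-2n}$), together with the relation $|\xi| = |T(x)| = |x|^{-1}$, so that $|x| = |\xi|^{-1}$. First I would record the key scalar identity: for $\xi = T(x) \neq 0$,
\begin{equation*}
    |J_T(x)|^{-1} = |x|^{2n} = |\xi|^{-2n}.
\end{equation*}
Measurability \ref{A1} and continuity \ref{A2} for $\B(\cdot,q)$ and $\B(\xi,\cdot)$ follow immediately from the corresponding properties of $\A$ together with the fact that $x \mapsto dT(x)$ and $x \mapsto |J_T(x)|$ are smooth (hence continuous) on $\Rn\setminus\{0\}$ and $T$ is a homeomorphism there; the value at $\xi = 0$ is a single point of measure zero and does not affect \ref{A1}, while \ref{A2} only needs to hold for a.e.\ $\xi$.

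\textbf{The homogeneity \eqref{cond3}.} This is the easiest structural condition: since $dT(x)$ is linear, $dT(x)(\lambda q) = \lambda\, dT(x)q$, and applying \eqref{cond3} for $\A$ gives $\A(x, dT(x)(\lambda q)) = \lambda|\lambda|^{p-2}\A(x, dT(x)q)$; the prefactor $|J_T(x)|^{-1}dT(x)$ is linear, so it pulls the scalar $\lambda|\lambda|^{p-2}$ straight through. At $\xi = 0$ the identity is trivial.

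\textbf{The ellipticity bounds \eqref{cond1}--\eqref{cond2}.} For \eqref{cond1}, compute
\begin{equation*}
    \B(\xi,q)\cdot q = |J_T(x)|^{-1}\, dT(x)\A(x, dT(x)q)\cdot q
    = |J_T(x)|^{-1}\, \A(x, dT(x)q)\cdot dT(x)q,
\end{equation*}
using that $dT(x)$ is symmetric. Now \eqref{cond1} for $\A$ (with weight $\equiv 1$) bounds this below by $|J_T(x)|^{-1}\alpha_1 |dT(x)q|^p$, and Lemma~\ref{lem-op-norm} gives $|dT(x)q|^p = |q|^p/|x|^{2p}$, while $|J_T(x)|^{-1} = |x|^{2n}$. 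Multiplying, the power of $|x|$ is $|x|^{2n-2p} = |\xi|^{2(p-n)} = w(\xi)$, which is exactly the desired weight, so $\B(\xi,q)\cdot q \ge \alpha_1 w(\xi)|q|^p$. For \eqref{cond2}, estimate $|\B(\xi,q)| \le |J_T(x)|^{-1}|dT(x)\A(x,dT(x)q)|$; by Lemma~\ref{lem-op-norm} the operator $dT(x)$ has norm $|x|^{-2}$, so $|dT(x)\A(x,dT(x)q)| \le |x|^{-2}|\A(x,dT(x)q)| \le |x|^{-2}\alpha_2|dT(x)q|^{p-1} = |x|^{-2}\alpha_2 |q|^{p-1}/|x|^{2(p-1)}$. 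Collecting the powers of $|x|$ together with $|J_T(x)|^{-1} = |x|^{2n}$ yields the exponent $2n - 2 - 2(p-1) = 2(n-p) = -2(p-n)$, i.e.\ $|x|^{-2(p-n)} = |\xi|^{2(p-n)} = w(\xi)$ (using $|x| = |\xi|^{-1}$), so $|\B(\xi,q)| \le \alpha_2 w(\xi)|q|^{p-1}$. At $\xi = 0$, $w(0) = 0$ (since $2(p-n)\ge 0$ with equality only when $p=n$, in which case $w\equiv 1$ and the bound $0\cdot|q|^{p-1}$ must be read as the genuine inequality $0 \le |q|^{p-1}$ — I would remark that $\B(0,q)=0$ makes both \eqref{cond1} and \eqref{cond2} hold trivially regardless).

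\textbf{The monotonicity \eqref{cond4}.} For $\xi \neq 0$ and $q \neq q'$, write $Q = dT(x)q$, $Q' = dT(x)q'$; since $dT(x)$ is invertible, $Q \neq Q'$. Then
\begin{equation*}
    (\B(\xi,q) - \B(\xi,q'))\cdot(q - q')
    = |J_T(x)|^{-1}\bigl(dT(x)(\A(x,Q) - \A(x,Q'))\bigr)\cdot(q-q')
    = |J_T(x)|^{-1}(\A(x,Q) - \A(x,Q'))\cdot(Q - Q'),
\end{equation*}
again by symmetry of $dT(x)$, and this is $> 0$ by \eqref{cond4} for $\A$ since $|J_T(x)|^{-1} > 0$. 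The case $\xi = 0$ does not arise here because \eqref{cond4} is only required for a.e.\ $x$, and the single point $\xi = 0$ is negligible.

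\textbf{Main obstacle.} There is no deep obstacle; the content is entirely the bookkeeping of powers of $|x|$ versus $|\xi|$. The one point that needs care is making sure the exponents in \eqref{cond1} and \eqref{cond2} land on exactly $\delta = 2(p-n)$ (not, say, with the wrong sign or an off-by-a-factor-of-two error), and handling the degenerate point $\xi = 0$ cleanly — there $\B \equiv 0$ by definition, so \eqref{cond1}--\eqref{cond2} reduce to $0 \ge 0$ and $0 \le w(0)|q|^{p-1}$ (with $w(0) = 0$ when $p > n$, and $w \equiv 1$ when $p = n$, both fine), and \eqref{cond4} is vacuous on a null set. The symmetry of $dT(x)$ is used repeatedly to move it across inner products and should be invoked explicitly.
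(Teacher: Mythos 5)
Your proposal is correct and follows essentially the same route as the paper: measurability/continuity and homogeneity are inherited via the smoothness and linearity of $dT(x)$, the bounds \eqref{cond1}--\eqref{cond2} and the monotonicity \eqref{cond4} come from the same computation using the symmetry of $dT(x)$, Lemma~\ref{lem-op-norm} and Corollary~\ref{cor-det}, with the powers of $|x|$ combining to $|x|^{2(n-p)}=|\xi|^{2(p-n)}=w(\xi)$. The only (harmless) difference is that you spell out the degenerate point $\xi=0$ and note that $|dT(x)\A(\cdots)|\le|x|^{-2}|\A(\cdots)|$ is in fact an equality, as the paper records.
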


\begin{proof}
Since $T$ is a smooth map, $\B$ inherits the measurability 
and continuity properties of $\A$, 
i.e.\ \ref{A1} and~\ref{A2}.
For a.e.\ $\xi \in \Rn$, the following hold:
\begin{align*}
    \B(\xi, q) \cdot q
     &=
    |J_T(x)|^{-1} dT(x)\A(x, dT(x)q) \cdot q 
    \overset{\eqref{det}}{=}  
    |x|^{2n} \A(x, dT(x)q) \cdot (dT(x)q) \\ 
    \overset{\eqref{cond1}}&{\ge} 
    \al_{1} |x|^{2n} |dT(x)q|^{p} 
    \overset{\eqref{op-norm}}{=} 
    \al_{1} |x|^{2(n-p)} |q|^{p}
    =  
    \al_{1} w(\xi) |q|^{p}
\end{align*}
and 
\begin{align*}
    |\B(\xi, q)| 
    &\le 
    |J_T(x)|^{-1} |dT(x) \A(x, dT(x)q)| 
    \overset{\eqref{op-norm},\eqref{det}}{=}
    |x|^{2(n - 1)} |\A(x, dT(x)q)| \\
    &\overset{\eqref{cond2}}{\le} 
    \al_2 |x|^{2(n-1)} |dT(x)q|^{p-1}    
    \overset{\eqref{op-norm}}{=} 
    \al_{2} |x|^{2(n-p)} |q|^{p-1} 
    =  \al_{2} w(\xi) |q|^{p-1}.
\end{align*}

Clearly the homogeneity property~\eqref{cond3} is inherited by $\B$.
Finally, if 
$q\ne q'$ then 
$dT(x)q\ne dT(x) q'$,
and thus
\begin{align*}
    &(\B(\xi, q) -   \B(\xi, q')) \cdot (q - q') \\
    & \qquad =  |J_T(x)|^{-1} dT(x) (\A(x, dT(x)q) - \A(x, dT(x)q') ) 
    \cdot(q - q') \\
    & \qquad \overset{\eqref{det}}{=} 
    |x|^{2n}( \A(x, dT(x)q) - \A(x, dT(x)q') ) 
    \cdot (dT(x)q - dT(x)q') 
    \overset{\eqref{cond4}}{>} 0.
    \qedhere
\end{align*}
\end{proof}

\begin{lemma} \label{lem-spaces}
Let $G \Subset \Rn \setm\{0\}$ be open. 
Then
\begin{equation} \label{sob-imp}
    u \in \Hp(G) 
\quad \text{if and only if} \quad
    u \circ T \in \Hp(T(G),w).
\end{equation}
Moreover, $\grad(u \circ T)(\xi) = dT(\xi) (\grad u \circ T)(\xi)$ when $u \in \Hp(G)$.
\end{lemma}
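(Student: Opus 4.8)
The plan is to establish the equivalence \eqref{sob-imp} by first treating smooth functions and then passing to the limit, using the explicit change of variables governed by $T$. Since $T$ is a smooth diffeomorphism of $\Rn\setm\{0\}$ onto itself with $T^{-1}=T$, and $G\Subset\Rn\setm\{0\}$ is open, the image $T(G)$ is again a relatively compact open subset of $\Rn\setm\{0\}$, on which $|x|=|T(\xi)|$ ranges over a compact subset of $(0,\infty)$; hence the weight $w(\xi)=|\xi|^{2(p-n)}$ and its reciprocal are both bounded above and below on $T(G)$. The same applies to the Jacobian factor $|J_T|$ by Corollary~\ref{cor-det}. This boundedness is what makes the whole argument work, so I would state it explicitly at the outset.

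First I would record the pointwise chain rule: if $\phi\in C^\infty(T(G))$, then $\phi\circ T\in C^\infty(G)$ and $\grad(\phi\circ T)(x)=dT(x)(\grad\phi\circ T)(x)$ (here using $dT(x)^{\mathsf T}=dT(x)$ by symmetry of the Jacobian matrix). Combining this with Lemma~\ref{lem-op-norm}, $|\grad(\phi\circ T)(x)|=|(\grad\phi\circ T)(x)|/|x|^2$. Then I change variables $\xi=T(x)$, $dx=|J_T(\xi)|\,d\xi = |\xi|^{-2n}\,d\xi$ (note $J_T$ at the two reciprocal points: $|J_T(x)|=|x|^{-2n}=|\xi|^{2n}$, so the Jacobian of the substitution $x\mapsto\xi$ is $|\xi|^{-2n}$), to compute
\begin{equation*}
\int_G |\grad(\phi\circ T)|^p\,dx
= \int_{T(G)} |\grad\phi|^p\,|T(\xi)|^{-2p}\,|\xi|^{-2n}\,d\xi
= \int_{T(G)} |\grad\phi|^p\,|\xi|^{2p-2n}\,d\xi
= \int_{T(G)} |\grad\phi|^p\,w\,d\xi,
\end{equation*}
since $|T(\xi)|=1/|\xi|$. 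An identical (in fact easier) computation gives $\int_G|\phi\circ T|^p\,dx=\int_{T(G)}|\phi|^p\,w\,d\xi$. Hence $\|\phi\circ T\|_{\Hp(G)}=\|\phi\|_{\Hp(T(G),w)}$ for smooth $\phi$; by symmetry ($T^{-1}=T$) the same identity reads in the other direction. So precomposition with $T$ is an isometry between the dense smooth subspaces.

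Next I would pass from smooth functions to general Sobolev functions. If $u\in\Hp(G)$, take $\phi_j\in C^\infty(G)$ with $\phi_j\to u$ and $\grad\phi_j\to\grad u$ in $L^p(G)$; then by the isometry $\phi_j\circ T$ is Cauchy in $\Hp(T(G),w)$, hence converges to some $v\in\Hp(T(G),w)$, and since $\phi_j\circ T\to u\circ T$ in $L^p(T(G),w)$ (again by the isometry on the $L^p$ level and boundedness of $w$), we get $v=u\circ T\in\Hp(T(G),w)$. Moreover $\grad(u\circ T)=\lim\grad(\phi_j\circ T)=\lim dT(\cdot)(\grad\phi_j\circ T)$; to identify this limit with $dT(\cdot)(\grad u\circ T)$, I note that $\grad\phi_j\to\grad u$ in $L^p(G)$ implies, after the change of variables, $\grad\phi_j\circ T\to\grad u\circ T$ in $L^p(T(G),w)$, and $dT$ acts as a bounded (indeed, by Lemma~\ref{lem-op-norm}, norm $|\xi|^2=|x|^{-2}$, bounded on $T(G)$) multiplication operator, so the limits match. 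The converse implication follows by applying the forward one to $u\circ T$ on $T(G)$ and using $T\circ T=\mathrm{id}$.

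The only genuine subtlety — hardly an obstacle — is bookkeeping the two reciprocal appearances of $|\cdot|$ and $J_T$: one must be careful that the point at which $J_T$ is evaluated in the change-of-variables formula is the one making the substitution's Jacobian come out to $|\xi|^{-2n}$, and that $|T(\xi)|=1/|\xi|$ converts the factor $|x|^{-2p}$ from Lemma~\ref{lem-op-norm} into exactly $|\xi|^{2p}$, so that the net weight is $|\xi|^{2p-2n}=w(\xi)$. Once the relatively-compact-away-from-$0$ hypothesis is used to keep $w$, $1/w$, $|J_T|$ and $\|dT\|$ all bounded, every step is a routine density/isometry argument.
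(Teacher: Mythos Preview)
Your approach is essentially the same as the paper's: approximate by smooth functions, use the chain rule and Lemma~\ref{lem-op-norm} for the gradient, change variables, and pass to the limit. There is, however, one computational slip. Your claim that ``an identical computation gives $\int_G|\phi\circ T|^p\,dx=\int_{T(G)}|\phi|^p\,w\,d\xi$'' is false: for the function values there is no chain-rule factor $|x|^{-2p}$, so the change of variables only yields
\[
\int_G|\phi\circ T|^p\,dx = \int_{T(G)}|\phi|^p\,|\xi|^{-2n}\,d\xi,
\]
which differs from $\int_{T(G)}|\phi|^p\,w\,d\xi$ by the factor $|\xi|^{2p}$. Consequently $\|\phi\circ T\|_{\Hp(G)}\ne\|\phi\|_{\Hp(T(G),w)}$ in general, and precomposition with $T$ is not an isometry on the full Sobolev norm (only on the gradient seminorm). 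The paper handles this correctly by writing $\simeq$ for the $L^p$ part, using that $|x|^{-2p}$ is bounded above and below on $G\Subset\Rn\setm\{0\}$---exactly the observation you made at the outset but then did not apply here. This does not break your argument, since comparability is all that is needed for the Cauchy-sequence step, but the isometry assertion should be downgraded to comparability.
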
 

\begin{proof}
Let $u \in \Hp(G)$, $\ut = u \circ T$ and $\phi_j \in C^\infty(G)$ be a sequence converging to $u$ in $\Hp(G)$, i.e.
\begin{equation} \label{conv}
    \lim_{j \to \infty} 
    \int_G  (|u - \phi_j|^p + |\grad u - \grad \phi_j|^p ) \,dx = 0.
\end{equation}
Then 
\begin{equation} \label{test-fun}
    \phit_j = 
    \phi_j \circ T \in C^{\infty}(T(G))
    \quad \text{and} \quad
    \grad \phit_j(\xi) = 
    dT(\xi) (\grad \phi_j \circ T)(\xi).
\end{equation}
Since $G \Subset \Rn \setminus \{0\}$, 
it follows from 
Corollary~\ref{cor-det} 
that
\begin{equation*}
    \int_{T(G)} |\phit_j|^p w \,d\xi = 
    \int_G |\phi_j|^p |x|^{-2p} \,dx  
    \simeq   
    \int_G |\phi_j|^p  \,dx,
\end{equation*}
with comparison constants depending only on $G$. 
Thus $\phit_j \in L^p(T(G), w)$. 
Similarly, 
we have by~\eqref{conv} that
\begin{equation*} 
    \int_{T(G)} |\ut-\phit_j|^p w \,d\xi   
    \simeq  
    \int_{G} |u-\phi_j|^p \,dx \to 0, 
    \quad \text{as } j \to \infty. 
\end{equation*}
Finally,  setting
$\grad \ut(\xi):=dT(\xi) \grad u(T(\xi))$, 
together with \eqref{test-fun},
Lemma~\ref{lem-op-norm} and 
Corollary~\ref{cor-det},
we find that
\begin{align*} 
    \int_{T(G)} |\grad 
    \ut- \grad \phit_j|^p w \,d\xi 
    &= 
    \int_{T(G)} 
        |dT(\xi) 
        (\grad u \circ T - \grad\phi_j \circ T)|^p 
        w
    \,d\xi  \\
    & = 
    \int_{G} |\grad u - \grad \phi_j|^p \,dx \to 0, 
    \quad \text{as } j \to \infty. 
\end{align*}
Thus, the sequence $\phit_j$ converges to $\ut$ in $\Hp(T(G),w)$
and $\grad \ut$ is indeed the gradient of $\ut$ therein.
This shows one implication in~\eqref{sob-imp} (and the last part).
The converse implication is shown similarly.
\end{proof}

\begin{lemma} \label{lem-int-A-B}
Let $u, v \in \Hploc(\Om)$, $\ut = u \circ T$,  $\vt = v \circ T$
and $E \Subset \Om$ be measurable.
Then $\ut, \vt \in \Hploc(T(\Om),w)$ and 
\begin{equation*}
    \int_{E}  \A(x, \grad u)  \cdot \grad v \,dx =
    \int_{T(E)} \B(\xi, \grad\tilde{u}) \cdot \grad\tilde{v} \,d\xi.
\end{equation*}
\end{lemma}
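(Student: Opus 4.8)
The plan is to reduce the integral identity to a pointwise change of variables under $T$, using Lemma~\ref{lem-spaces} to handle the Sobolev membership and Corollary~\ref{cor-det} plus Lemma~\ref{lem-op-norm} for the Jacobian bookkeeping. First I would observe that $\Hploc(\Om)$-membership is a local statement: for any open $G \Subset \Om$ we have $G \Subset \Rn \setm \{0\}$ (since $\Om \subset \Rn \setm \{0\}$ and $\clG$ is compact), so Lemma~\ref{lem-spaces} applies directly to give $u|_G \in \Hp(G)$ iff $\ut|_{T(G)} \in \Hp(T(G),w)$, together with the gradient formula $\grad\ut(\xi) = dT(\xi)(\grad u \circ T)(\xi)$; exhausting $T(\Om)$ by sets of the form $T(G)$ (note $T$ is a homeomorphism of $\Rn\setm\{0\}$ onto itself with $T^{-1}=T$, so it maps relatively compact open subsets to relatively compact open subsets) yields $\ut,\vt \in \Hploc(T(\Om),w)$. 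The same applies to $\vt$.

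Next, for the integral identity I would perform the change of variables $\xi = T(x)$ on the right-hand side. Since $T$ is a diffeomorphism of $\Rn\setm\{0\}$ with $T\circ T = \mathrm{id}$, we have $x = T(\xi)$ and the change-of-variables formula gives
\begin{equation*}
  \int_{T(E)} \B(\xi,\grad\ut(\xi))\cdot\grad\vt(\xi)\,d\xi
  = \int_{E} \B(T(x),\grad\ut(T(x)))\cdot\grad\vt(T(x))\,|J_T(x)|\,dx,
\end{equation*}
using $E \Subset \Om \subset \Rn \setm\{0\}$ so that $0 \notin \clE$ and the integrand is well-defined a.e. Now I would insert the definition of $\B$: for $\xi = T(x) \neq 0$,
\begin{equation*}
  \B(T(x),q) = |J_T(x)|^{-1}\,dT(x)\,\A(x,dT(x)q),
\end{equation*}
where one uses $T(\xi) = T(T(x)) = x$ to see the "$x$" in the definition of $\B(\xi,\cdot)$ is exactly this $x$. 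Applying this with $q = \grad\ut(T(x)) = \grad\ut(\xi)$ and recalling from Lemma~\ref{lem-spaces} (applied to $\ut$, whose $T$-pullback is $u$ since $T\circ T=\mathrm{id}$) that $dT(x)\,\grad\ut(T(x)) = \grad(\ut\circ T)(x) = \grad u(x)$, the factor $|J_T(x)|$ cancels and the matrix $dT(x)$ moves across the inner product: since $dT(x)$ is symmetric,
\begin{equation*}
  \B(\xi,\grad\ut(\xi))\cdot\grad\vt(\xi)\,|J_T(x)|
  = dT(x)\A(x,\grad u(x))\cdot\grad\vt(T(x))
  = \A(x,\grad u(x))\cdot\bigl(dT(x)\grad\vt(T(x))\bigr).
\end{equation*}
Finally $dT(x)\,\grad\vt(T(x)) = \grad(\vt\circ T)(x) = \grad v(x)$, again by Lemma~\ref{lem-spaces}, so the integrand equals $\A(x,\grad u(x))\cdot\grad v(x)$ and the identity follows.

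The routine points to verify carefully are: that the relation $T(\xi)=x$ is used consistently so that the "$x$" appearing in the piecewise definition of $\B$ matches the integration variable after the substitution; that $dT(x)$ is symmetric (immediate from its explicit matrix form) so it is self-adjoint in the Euclidean inner product; and that all the manipulations take place on the set $E \Subset \Om$ where $0$ is bounded away, so no issue arises from the singularity of $w$ or of $T$ at the origin. I expect the only mild subtlety — hardly an obstacle — to be keeping straight the double application of Lemma~\ref{lem-spaces}: once to identify $dT(x)(\grad u\circ T)$ with the gradient of $u\circ T$, and once, after substitution, to recognize $dT(x)(\grad\vt\circ T) = \grad(\vt\circ T) = \grad v$ (using $\vt\circ T = v$). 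The Sobolev regularity needed to justify testing and the a.e. identities is exactly what Lemma~\ref{lem-spaces} provides locally, and the exhaustion argument transfers it to $\Hploc$.
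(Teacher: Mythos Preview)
Your proof is correct and follows essentially the same route as the paper's: both use Lemma~\ref{lem-spaces} for the Sobolev membership and the gradient chain rule, then perform the change of variables $x\leftrightarrow\xi$ under $T$ and unpack the definition of $\B$ (you go from the $\B$-integral to the $\A$-integral, while the paper goes in the opposite direction, and you invoke the symmetry of $dT(x)$ where the paper uses the transpose notation $(\grad v)^t$, but this is purely cosmetic).
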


\begin{proof}
Lemma~\ref{lem-spaces} ensures that indeed $\ut, \vt \in \Hploc(T(\Om),w)$, 
and also
\begin{equation*}
    \grad u(x) = dT(x) (\grad \ut \circ T)(x)
    \quad \text{and} \quad 
    \grad v(x) = dT(x) (\grad \vt \circ T)(x). 
\end{equation*}
By writing $x = T(\xi)$ we find that
\begin{align*}
    \int_E \A(x, \grad u) \cdot \grad v \,dx 
    &=  
    \int_E (\grad v)^t \A(x, \grad u) \,dx \\
    &=  
    \int_{T(E)} 
        (\grad\vt)^{t} dT(x)
        \A(x, dT(x)\grad\ut) |J_T(x)|^{-1} \,d\xi \\
    &=  
    \int_{T(E)} 
        \B(\xi, \grad\ut) \cdot \grad\vt \,d\xi.
    \qedhere
    \end{align*}
\end{proof}

\emph{Hereafter, $\B$-harmonicity, $\B$-superharmonicity and $\B$-regularity 
should always be interpreted with respect to the set $T(\Omega)$ and the weight $w(\xi) = |\xi|^{2(p-n)}$. 
\textup{(}The~corresponding notions for $\A$ concern $\Omega$ and the Lebesgue measure
with the constant weight~$1$.\textup{)}}

\begin{lemma} \label{lem-harm-equiv}
A function $u$ is $\A$-harmonic in $\Om$ if and only if $u \circ T$
is $\B$-harmonic in $T(\Om)$. 
The corresponding assertion concerning superharmonicity is valid as well. 
\end{lemma}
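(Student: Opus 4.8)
The plan is to reduce $\B$-harmonicity to an integral identity and then transfer it across $T$ using Lemma~\ref{lem-int-A-B}. First I would prove the harmonic equivalence. Suppose $u$ is $\A$-harmonic in $\Om$; then $u \in \Hploc(\Om) \cap C(\Om)$, so by Lemma~\ref{lem-spaces} (applied on each $G \Subset \Om \setm\{0\}$, noting $0 \notin \Om$) we get $\ut := u \circ T \in \Hploc(T(\Om),w)$, and $\ut \in C(T(\Om))$ since $T$ is a homeomorphism of $\Rn\setm\{0\}$. It remains to verify the weak formulation: for every $\phi \in C_0^\infty(T(\Om))$ we must show $\int_{T(\Om)} \B(\xi,\grad\ut)\cdot\grad\phi\,d\xi = 0$. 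The test function $\phi$ has $\supp\phi =: K \Subset T(\Om)$, hence $T(K) \Subset \Om$ is compact and, since $0\notin\Om$, also bounded away from $0$; set $v = \phi \circ T \in C_0^\infty(\Om)$ with $\supp v = T(K)$. Choosing $E$ to be a bounded open set with $T(K) \Subset E \Subset \Om$, Lemma~\ref{lem-int-A-B} gives
\begin{equation*}
    \int_{T(\Om)} \B(\xi,\grad\ut)\cdot\grad\phi\,d\xi
    = \int_{T(E)} \B(\xi,\grad\ut)\cdot\grad\phi\,d\xi
    = \int_{E} \A(x,\grad u)\cdot\grad v\,dx = 0,
\end{equation*}
the first equality because $\grad\phi$ vanishes outside $T(\supp v)=K\subset T(E)$, and the last because $v \in C_0^\infty(\Om)$ is an admissible test function for the $\A$-harmonicity of $u$ (after noting $\grad v = 0$ outside $E$). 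The converse is identical using $T^{-1}=T$ and Lemma~\ref{lem-B-conds}, which guarantees $\B$ is of the same structural type so that Definition~\ref{A-harm} applies to it.

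For the superharmonic part I would argue directly from Definition~\ref{A-sup}, using that $T$ is a homeomorphism of $\Rn\setm\{0\}$ onto itself that maps open sets to open sets, relatively compact subsets to relatively compact subsets, and boundaries to boundaries. Let $u$ be $\A$-superharmonic in $\Om$ and put $\ut = u\circ T$. Lower semicontinuity of $\ut$ follows from that of $u$ composed with the continuous map $T$, and $\ut\not\equiv\infty$ on each component of $T(\Om)$ because $T$ maps components of $\Om$ bijectively onto components of $T(\Om)$. For the comparison property, fix an open $H \Subset T(\Om)$ and a function $\psi \in C(\clH)$ that is $\B$-harmonic in $H$ with $\psi \le \ut$ on $\bdy H$; set $G = T(H) \Subset \Om$ and $v = \psi\circ T$. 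Since $T(\clH) = \overline{G}$ and $T(\bdy H) = \bdy G$, we have $v \in C(\overline{G})$, $v$ is $\A$-harmonic in $G$ by the equivalence just proved, and $v \le u$ on $\bdy G$. Superharmonicity of $u$ then gives $v \le u$ in $G$, i.e.\ $\psi \le \ut$ in $H$. Hence $\ut$ is $\B$-superharmonic in $T(\Om)$. The reverse implication again follows from $T^{-1}=T$ together with Lemma~\ref{lem-B-conds}.

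The only subtlety worth care is the interaction with the excluded origin: every set in sight ($\Om$, $T(\Om)$, the supports $K$, $T(K)$, the auxiliary $E$) lies in $\Rn\setm\{0\}$ and is bounded away from $0$, so all invocations of Lemma~\ref{lem-spaces} and Lemma~\ref{lem-int-A-B} are legitimate and one never needs to control $\B$ or $w$ near $\xi=0$. The main obstacle is thus not analytic but bookkeeping: making sure the test function $v = \phi\circ T$ is genuinely in $C_0^\infty(\Om)$ — which it is, because $\supp v = T(\supp\phi)$ is a compact subset of $\Om$ and $T$ is smooth there — and that the domain $E$ in Lemma~\ref{lem-int-A-B} can be chosen so that $T(E)$ contains $\supp\phi$; both are immediate from compactness. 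I would state the proof concisely, citing Lemmas~\ref{lem-B-conds}, \ref{lem-spaces} and~\ref{lem-int-A-B} for all the structural and change-of-variables facts, and spend a sentence each on the homeomorphism properties of $T$ used in the superharmonic case.
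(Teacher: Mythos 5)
Your proposal is correct and follows essentially the same route as the paper: Lemma~\ref{lem-spaces} for membership in the (weighted) Sobolev spaces, the bijection $\phi \mapsto \phi\circ T$ between test functions together with Lemma~\ref{lem-int-A-B} (applied with $E=\supp\phi$, or a slightly larger set as you do) for the weak formulation, and a direct transfer of the comparison property in Definition~\ref{A-sup} via the homeomorphism $T$ for superharmonicity. No substantive differences.
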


\begin{proof}
We begin with the first claim. 
Lemma~\ref{lem-spaces} shows that
\begin{equation*}
    u\in \Hploc(\Om) 
    \quad \text{if and only if} \quad
    u \circ T \in \Hploc(T(\Om),w).
\end{equation*}
Furthermore, 
since 
\begin{equation*}
    \phi\in C^\infty_0(\Om)
    \quad \text{if and only if} \quad 
    \phi\circ T \in C^\infty_0(T(\Om)),
\end{equation*}
Lemma~\ref{lem-int-A-B} ensures that $u$ meets the conditions 
of Definition~\ref{A-harm} with $\A$, $\Omega$ and the constant weight $1$
if and only if
$u \circ T$ meets the same conditions with $\B$, $T(\Om)$~and the weight $w(\xi) = |\xi|^{2(p-n)}$.
(Note that in Definition~\ref{A-harm}  
it suffices to integrate over the set $E = \supp \phi$, 
which is allowed in
Lemma~\ref{lem-int-A-B}.)

To prove the second claim, suppose that $u$ 
is an $\A$-superharmonic function in $\Om$.
The lower semicontinuity of $u$ is then inherited by $\ut = u \circ T$ 
due to the continuity of $T$. 
In addition,
since $u \not\equiv \infty$ in each component of $\Om$, 
it follows that $\ut \not\equiv \infty$ in each component of $T(\Om)$. 

To show that $\ut$ is $\B$-superharmonic in $T(\Om)$,
let $G \Subset T(\Om)$ be open
and let $\vt \in C(\clG)$  be  $\B$-harmonic in $G$
and such that $\vt \le \ut$ on $\bdy G$.
Then  $v: = \vt \circ T$
is $\A$-harmonic in $T(G)$ by the first part of the lemma.
The continuity of $T^{-1}=T$ implies that $T(G) \Subset \Om$ is open, 
$    v \in C(T(\clG)) =  C(\overline{T(G)})$  
and $    v \le u \text{ on } T(\partial G) = \bdy T(G)$.
Since $u$ is $\A$-superharmonic in $\Om$, we see that
\begin{equation*}
    v \le u \text{ in } T(G)
    \quad \text{or equivalently} \quad
    \vt \le \ut \text{ in } G,
\end{equation*}
which proves that 
$\ut$ is indeed $\B$-superharmonic in $T(\Om)$.
The converse implication is proved in a similar manner.
\end{proof}

Lemma~\ref{lem-harm-equiv} ensures that for every $f\in C(\bdystar\Om)$,
\begin{equation*}
    u \in \UU(f, \A, \Om)    
    \quad \text{if and only if} \quad
    u \circ T \in \UU(f \circ T, \B, T(\Om)).
\end{equation*}
Consequently,
\begin{equation}  \label{eq-equal-Perrons-A-B}
    \uP^{\A}_\Om f(x) =
    \uP^{\B}_{T(\Om)} (f \circ T)(T(x))
    \quad \text{for all } 
    x \in \Om.
\end{equation}
This equality makes it possible to use the inversion $T$ to
transfer results about 
finite boundary points to similar results at $\binfty$.
The following result deals with boundary regularity.

\begin{theorem} \label{thm-reg-eqv}
A point $x_0 \in \bdystar \Om$ is $\A$-regular 
with respect to $\Om$
if and only if the corresponding point 
$T(x_0) \in \bdystar  T(\Om)$ 
is  $\B$-regular with respect to $T(\Om)$.
\end{theorem}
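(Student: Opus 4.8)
The plan is to read off the equivalence from the Perron-solution identity~\eqref{eq-equal-Perrons-A-B}, after matching the extended boundaries of $\Om$ and $T(\Om)$ via $T$. So first I would fix the topological picture. The map $T$ extends to a homeomorphism of the one-point compactification $X:=\Rn\cup\{\binfty\}$ onto itself, with $T=T^{-1}$, since $T(x)\to0$ as $|x|\to\infty$ and $T(x)\to\binfty$ as $x\to0$. Because $\Om\subset\Rn\setm\{0\}$ is open in $X$, so is $T(\Om)$; computing closures in $X$ shows that the boundary of $\Om$ in $X$ equals $\bdystar\Om$ (here $\binfty$ is a limit point of $\Om$ precisely because $\Om$ is unbounded) and, likewise, the boundary of $T(\Om)$ in $X$ equals $\bdystar T(\Om)$, regardless of whether $T(\Om)$ is bounded, i.e.\ of whether $0\in\clOm$. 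Since a homeomorphism of $X$ carries boundaries to boundaries, $T$ restricts to a homeomorphism $\bdystar\Om\to\bdystar T(\Om)$; in particular $T(x_0)$ is the ``corresponding point'' appearing in the statement, and for $x\in\Om$ one has $x\to x_0$ in $\Om$ if and only if $T(x)\to T(x_0)$ in $T(\Om)$. This treats the finite boundary points and the exceptional points $x_0=0$ and $x_0=\binfty$ uniformly.

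Next I would prove the forward direction; the converse is symmetric. Assume $x_0\in\bdystar\Om$ is $\A$-regular and put $\xi_0:=T(x_0)$. Let $g\in C(\bdystar T(\Om))$ be arbitrary. Then $f:=g\circ T\in C(\bdystar\Om)$ by the previous paragraph, and $f\circ T=g\circ(T\circ T)=g$ because $T\circ T$ is the identity. Substituting this $f$ into~\eqref{eq-equal-Perrons-A-B} gives
\begin{equation*}
  \uP^{\B}_{T(\Om)} g(T(x)) = \uP^{\A}_{\Om} f(x), \qquad x\in\Om.
\end{equation*}
By $\A$-regularity of $x_0$ the right-hand side tends to $f(x_0)=g(\xi_0)$ as $x\to x_0$ in $\Om$; setting $\xi=T(x)$ and using the equivalence of limits from the first paragraph, this says exactly that $\uP^{\B}_{T(\Om)} g(\xi)\to g(\xi_0)$ as $\xi\to\xi_0$ in $T(\Om)$. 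Since $g$ was arbitrary, $\xi_0=T(x_0)$ is $\B$-regular with respect to $T(\Om)$. For the converse one runs the same computation with the pairs $(\A,\Om)$ and $(\B,T(\Om))$ interchanged, reading~\eqref{eq-equal-Perrons-A-B} with $f=g\circ T$ and $x=T(\xi)$ so as to obtain $\uP^{\B}_{T(\Om)} g(\xi)=\uP^{\A}_{\Om}(g\circ T)(T(\xi))$.

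The step I expect to need the most care is the boundary bookkeeping at $0$ and $\binfty$: verifying that $T$ really maps $\bdystar\Om$ onto $\bdystar T(\Om)$ and that convergence $x\to x_0$ within $\Om$ corresponds to convergence $T(x)\to T(x_0)$ within $T(\Om)$. Passing to the compactification $X$ and using $0\notin\Om$ resolves this cleanly, and everything else is a direct substitution into~\eqref{eq-equal-Perrons-A-B} together with the definition of regularity.
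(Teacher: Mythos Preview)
Your proposal is correct and follows the paper's approach: both arguments read the equivalence directly off the Perron-solution identity~\eqref{eq-equal-Perrons-A-B}, using that $f\mapsto f\circ T$ bijects $C(\bdystar\Om)$ with $C(\bdystar T(\Om))$ and that $x\to x_0$ in $\Om$ iff $T(x)\to T(x_0)$ in $T(\Om)$. The paper's proof is a two-line version of yours; your extra care with the compactification and the case $0\in\clOm$ versus $0\notin\clOm$ makes explicit what the paper leaves implicit.
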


\begin{proof}
By~\eqref{eq-equal-Perrons-A-B},
  the boundary value $f(x_0)$ is attained at $x_0$ by the 
Perron solution $\uP^{\A}_\Om f$
if and only if it is attained at 
$T(x_0)$ by the Perron solution 
$\uP^{\B}_{T(\Om)} (f \circ T)$.
Hence,
$x_0$ is $\A$-regular if and only if
$T(x_0)$ is $\B$-regular. 
\end{proof}

Already here we can see that $\A$-regularity at $\binfty$ only
depends on $n$ and $p$, 
since the same is true for $\B$-regularity at the origin 
by the Wiener criterion (Theorem~\ref{thm-Wiener}). 

The following result shows that the actual value of $f(\binfty)$ is unimportant
for the Perron solution when $f$ is continuous and $p\ge n$.

\begin{prop}  \label{prop-Perron-w/o-infty}
Assume that 
$\cp\bdy\Om>0$
and let $f\in C(\bdy\Om\cup\{\binfty\})$.
Then the infimum in the definition~\eqref{eq-uP} of the upper Perron solution $\uP f$
can equivalently be taken
over all  $\A$-superharmonic functions $u$ in $\Om$ such that 
\begin{equation}   \label{eq-liminf-w/o-infty}
   \liminf_{x \to z} u(x) \ge f(z) 
   \text{ for  all } z \in \bdy \Om
\quad \text{and} \quad
   \liminf_{x \to \binfty} u(x) >-\infty.
\end{equation}
\end{prop}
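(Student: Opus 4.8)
The plan is to show the two-way inclusion between the set $\UU(f,\A,\Om)$ from Definition~\ref{deff-Perron} and the larger family of $\A$-superharmonic functions satisfying only~\eqref{eq-liminf-w/o-infty}; call the latter $\UU'$. Since $\UU(f,\A,\Om)\subset\UU'$ is immediate (a function with $\liminf_{x\to\binfty}u(x)\ge f(\binfty)$ a fortiori has $\liminf_{x\to\binfty}u(x)>-\infty$), the infimum over $\UU'$ is $\le\uP f$. The real content is the reverse inequality: given $u\in\UU'$, I must produce, for each $\eps>0$, a function $u_\eps\in\UU(f,\A,\Om)$ with $u_\eps\le u+\eps$ on $\Om$ (or at least $\inf_{\UU'}\le\uP f$ in the limit). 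The natural device is to add to $u$ a small multiple of a suitable $\A$-superharmonic ``barrier'' that blows up, or is at least bounded below by a large constant, near $\binfty$, while being small elsewhere.

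Concretely, I would transfer the whole question through the inversion $T$ using Lemma~\ref{lem-harm-equiv} and~\eqref{eq-equal-Perrons-A-B}: the hypothesis $\cp\bdy\Om>0$ becomes $\cpw\bdy T(\Om)>0$ (this equivalence should follow from Lemma~\ref{lem-spaces} together with the definition of $\cpw F=0$ via balls, though one must check that the behaviour near the origin — the image of $\binfty$ — is handled correctly; for $F=\bdy\Om$ not containing $\binfty$ this is local away from $0$ and unproblematic). Then $\binfty$ corresponds to the finite point $0\in\bdy T(\Om)$, and the claim reduces to the standard fact for finite boundary points: when the capacity of the complement is positive, in the definition of the upper Perron solution one may replace the condition $\liminf_{x\to x_0}u(x)\ge f(x_0)$ at a single finite point $x_0$ by $\liminf_{x\to x_0}u(x)>-\infty$. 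This is essentially because near $x_0$ one can use the $\A$-superharmonic fundamental-type solution / a Wiener-type barrier whose existence is guaranteed by $\cpw(T(\Om)^c\cap\clB_r,B_{2r})>0$: adding $\eps$ times such a barrier to $u$ repairs the boundary inequality at $x_0$ at the cost of $\eps$ elsewhere, and letting $\eps\to0$ gives the result. I would cite the relevant statement in \cite{HeKiMa} (the discussion around Theorems~9.25 and the barrier constructions in Chapter~6/9) rather than reprove it.

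The main obstacle I anticipate is the interplay between the single exceptional point and the possibility $\cpw\{0\}=0$. Since $p\ge n$ (equivalently $\de=2(p-n)\ge0$ and the weighted space still has small points), the point $0$ itself has zero capacity, so $0$ cannot be ``filled in'' by the complement alone; what saves us is that $\cpw\bdy T(\Om)>0$ means $\bdy T(\Om)$ is genuinely large near $0$ at most scales, giving a barrier. I need to make sure the barrier argument only uses positivity of $\cpw(\bdy T(\Om)\cap\clB_r,B_{2r})$ for arbitrarily small $r$, not at a fixed scale, and to handle the degenerate sub-case where $\bdy T(\Om)$ accumulates at $0$ only along a sparse sequence of radii. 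An alternative, perhaps cleaner route that sidesteps some of this: argue directly in $\Om$ without inverting, noting that for $p\ge n$ and $\cp\bdy\Om>0$ there is (by the unweighted Wiener criterion machinery) an $\A$-superharmonic function on $\Om$ tending to $+\infty$ at $\binfty$ and bounded near $\bdy\Om$; then $u+\eps\cdot(\text{this function})$ lies in $\UU(f,\A,\Om)$ whenever $u\in\UU'$ with $f(\binfty)$ finite, because $\liminf$ at $\binfty$ becomes $+\infty\ge f(\binfty)$, and one lets $\eps\to0$. Either way the proof is short once the barrier is in hand, and the writeup should foreground the existence of that barrier as the one nontrivial input.
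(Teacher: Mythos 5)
Your opening move --- transferring the problem through $T$ so that $\binfty$ becomes the finite boundary point $0$ of $T(\Om)$ --- is exactly the paper's starting point, but the mechanism you then propose does not survive the nonlinearity. You want to repair the boundary inequality at the exceptional point by adding $\eps$ times a superharmonic barrier to $u$; for $p\ne 2$, however, the sum of two $\A$-superharmonic (or $\B$-superharmonic) functions is in general \emph{not} superharmonic, so $u+\eps v$ need not lie in the upper class and the whole device collapses. The same objection applies to your ``alternative route'' with a barrier at $\binfty$. You have also inverted the role of the capacities: the relevant fact is not positivity of $\cpw(T(\Om)^c\cap\clB_r,B_{2r})$ near $0$ but the vanishing $\cpw\{0\}=0$ (Example~2.22 in \cite{HeKiMa}), which makes the single point $0$ \emph{negligible} for the Perron method. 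The correct input is therefore an invariance/resolutivity theorem for boundary data perturbed on a set of capacity zero --- the paper cites \cite[Theorem~3.9]{abuBB} --- not a blow-up barrier; the passage around Theorem~9.25 of \cite{HeKiMa} that you gesture at does not supply this.

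There is a second, independent gap: the invariance result applies only when $T(\Om)$ is bounded, i.e.\ when $\Om$ avoids some ball. For a general unbounded $\Om$ the paper must add a localization step: the hypothesis $\cp\bdy\Om>0$ is used, via the Kellogg property, to produce a \emph{regular finite} boundary point $x_0\in\bdy\Om$; one then works on $\Om\setm\clB(x_0,r)$ (whose inverted image is bounded), compares $\uP f$ there with a slightly modified boundary function, and lets $r\to0$. In your proposal $\cp\bdy\Om>0$ never actually does any work of this kind --- you try to spend it on a barrier at $\binfty$/at $0$, which is both the wrong place and, as above, an unusable tool here. So the proof as proposed has a genuine gap and would need to be restructured along the lines just described.
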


For \p-harmonic functions this follows from 
Theorem~7.8 in Hansevi~\cite{Hansevi2} 
or Theorem~6.5(b) in Bj\"orn--Bj\"orn--Li~\cite{BBLi},
but for $\A$-harmonic functions this observation seems to be new.

\begin{proof}
Example~2.22 in \cite{HeKiMa} shows that $\cpw\{0\}=0$.
It therefore follows from \eqref{eq-equal-Perrons-A-B}
and the invariance result in
Bj\"orn--Bj\"orn--Mwasa~\cite[Theorem~3.9]{abuBB} that  
a statement corresponding  this  proposition
with $\binfty$, $\A$ and $\Om$ 
replaced by $0$, $\B$ and $T(\Om)$, respectively, is true provided that $T(\Om)$ is bounded.
Thus, by Lemma~\ref{lem-harm-equiv}, the statement holds whenever $\Om\cap B=\emptyset$
for some ball $B$.

To prove the statement for general $\Om\subset\R^n$ with 
$\cp\bdy\Om>0$, 
note that 
by the Kellogg property \cite[Theorem~9.11]{HeKiMa}, 
there is $x_0\in\bdy\Om$ which is regular for $\Om$.
Without loss of generality, we can assume that $f(x_0)=0$.

Clearly, replacing \eqref{eq-Perron-liminf} by \eqref{eq-liminf-w/o-infty}
can only make the upper Perron solution smaller.
It thus suffices to  show that $Pf \le u$
in $\Om$
for any $\A$-superharmonic function $u$ in $\Om$ 
satisfying~\eqref{eq-liminf-w/o-infty}.
To this end, note that $\lim_{\Om\ni x\to x_0}Pf(x) = 0$,
by the regularity of $x_0$.
Choose $\eps>0$ and
find a ball $B=B(x_0,r)$ with $0<r<\eps$ such that 
\begin{equation}    \label{eq-choose-B-for-eps}
|f| <\eps \text{ on } \clB \cap\bdy\Om, \quad  
|Pf| <\eps \text{ in } \clB\cap\Om
\quad \text{and} \quad u> -\eps \text{ in } \clB\cap\Om.
\end{equation}
Let
\[
\ft = \begin{cases}   
        \max\{f-\eps,0\} + \min\{f+\eps,0\} & \text{on } \bdy\Om\setm \clB, \\
        0   & \text{on } \clOm \cap \bdy B.   
       \end{cases}
\]
Then $\ft\in C(\bdy(\Om\setm \clB)\cup\{\binfty\})$ and 
\[
\liminf_{\Om\ni x\to z} u(x) \ge \ft(z) -\eps
\quad \text{for all } z\in \bdy (\Om\setm\clB).
\]
By the already justified statement for $\Om\setm \clB$,
we therefore have 
\[
u\ge P_{\Om\setm\clB}\ft - \eps \quad  \text{in } \Om\setm\clB.
\]
At the same time, it follows from \eqref{eq-choose-B-for-eps} that
\begin{equation}    \label{eq-comp-ft-fhat}
\ft + \eps \ge \fhat := \begin{cases}   
        f & \text{on } \bdy\Om\setm \clB, \\
        \lP f   & \text{on } \clOm \cap \bdy B.
       \end{cases}
\end{equation}
Now, if $v$ is an $\A$-subharmonic function in $\Om$,
admissible in the definition of $\lP f$, then 
by \eqref{eq-comp-ft-fhat},
$v$ is also admissible in the definition of $\lP_{\Om\setm\clB}(\ft+\eps)$.
Taking the supremum over all such $v$, together with the resolutivity of $f$, gives 
\[
Pf = \lP f \le \lP_{\Om\setm\clB}  (\ft+\eps) \le u + 2\eps \quad  \text{in } \Om\setm\clB(x_0,r).
\]
Letting $\eps\to0$ (and thus $r\to0$) shows that $Pf \le u$ in $\Om$.
\end{proof}

\section{The proofs of Theorems~\ref{thm-Wiener-Br-in-Om-intro} and~\ref{thm-reg-p>n}}
\label{sect-1.1+1.2}

\emph{Recall the standing assumptions from the beginning of
Section~\ref{circ-inv}.}

\medskip

The following lemma will make it possible to transfer the Wiener
criterion~\eqref{eq-Wiener-1} for $\B$-regularity of $0$ to the Wiener
criterion~\eqref{eq-Wiener-Br-in-Om-intro} for $\A$-regularity at $\binfty$.
At least for $p=n$, it probably comes as no surprise to readers familiar with 
(quasi)conformal mappings and modulus of curve families.
Since the inversion is a M\"obius transformation, it preserves the $n$-modulus 
of curve families, cf.\  Gehring~\cite[Section~29]{Gehring}.
It is also well known that modulus is closely related (and often equal) to capacities.
Our proof below uses Sobolev spaces.
The case $p\ne n$ requires a weighted  capacity.

\begin{lemma} \label{lem-cap-eq-unbdd}
Let $0\in G \subset \Rn$ be open    
and $K \Subset G$ be compact. 
Then
\begin{equation*} 
\cpw  (K,G) =
\cp(\R^n\setm T(G), \R^n\setm T(K)).
\end{equation*} 
\end{lemma}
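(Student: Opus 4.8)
\emph{Plan.} I would prove the identity by transporting test functions through the inversion $T$. Recall (cf.\ \cite[pp.~27--28]{HeKiMa}) that both condenser capacities may be computed using only test functions $u$ with $0\le u\le1$ which equal $1$ on a neighbourhood of the relevant compact set, and that it is convenient to view $T$ as a self-homeomorphism of the one-point compactification $\Rn\cup\{\binfty\}$ interchanging $0$ and $\binfty$, so that $T(G)$ is open, $T(K)$ is compact, and $\Rn\setm T(G)=(\Rn\cup\{\binfty\})\setm T(G)$ since $\binfty=T(0)\in T(G)$. To such a test function $u\in C_0^\infty(G)$, admissible for $\cpw(K,G)$, I attach
\[
  v:=1-u\circ T \quad\text{on } T(G), \qquad v:=1 \quad\text{on } \Rn\setm T(G).
\]
Since $\spt u$ is a compact subset of $G$, $u$ vanishes near $\bdy G$, so $u\circ T$ vanishes near $\bdy T(G)$ and the two pieces glue to a function $v\in C^\infty(\Rn)$ with $v\equiv1$ on a neighbourhood of $\Rn\setm T(G)$ and $v\equiv0$ on a neighbourhood of $T(K)$ (where $u\equiv1$). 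The assignment $v\mapsto 1-v\circ T$ (extended by $0$) reverses this, using $T^{-1}=T$.

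\emph{The main obstacle: the points $0$ and $\binfty$.} The one delicate point is that $v(\xi)\to1-u(0)$ as $|\xi|\to\infty$, so for $v$ to be admissible --- i.e.\ compactly supported in $\Rn\setm T(K)$, which may be unbounded --- I want $u\equiv1$ near $0$, i.e.\ $0\in K$. I would reduce to that case. On the right-hand side nothing changes on replacing $K$ by $K\cup\{0\}$, because $T(K\cup\{0\})=T(K)\cup\{\binfty\}$ and hence $\Rn\setm T(K\cup\{0\})=\Rn\setm T(K)$. On the left-hand side, $\cpw(K\cup\{0\},G)=\cpw(K,G)$, because $\cpw\{0\}=0$ by \cite[Example~2.22]{HeKiMa}: given an admissible $u$ for $\cpw(K,G)$ and $\eps>0$, choose $\eta\in C_0^\infty(B(0,\delta))$ with $\eta\equiv1$ near $0$ and $\int_G|\grad\eta|^pw\,dx<\eps$, with $\delta$ so small that $B(0,\delta)\cap K=\emptyset$ and $B(0,\delta)\Subset G$; then $\max\{u,\eta\}$ is admissible for $\cpw(K\cup\{0\},G)$ and, by \eqref{eq-grad-max}, $\int_G|\grad\max\{u,\eta\}|^pw\,dx\le\int_G|\grad u|^pw\,dx+\eps$. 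Once $0\in K$, the normalisation $u\equiv1$ on a ball $B(0,\delta)$ forces $v\equiv0$ on $\{|\xi|>1/\delta\}$, while $v\equiv0$ near $T(K)\supset\bdy(\Rn\setm T(K))$; hence $v\in C_0^\infty(\Rn\setm T(K))$ with $v\equiv1$ near $\Rn\setm T(G)$, so $v$ is admissible for $\cp(\Rn\setm T(G),\Rn\setm T(K))$. Symmetrically the inverse assignment sends admissible $v$ to admissible $u$, so $u\mapsto v$ is a bijection between the two normalised admissible classes.

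\emph{Matching the energies and conclusion.} For $v=1-u\circ T$ one has $\grad v(\xi)=-\,dT(\xi)(\grad u)(T(\xi))$, whence $|\grad v(\xi)|=|(\grad u)(T(\xi))|/|\xi|^2$ by Lemma~\ref{lem-op-norm}. Substituting $x=T(\xi)$ --- with $|\xi|=|x|^{-1}$ and $d\xi=|J_T(x)|\,dx=|x|^{-2n}\,dx$ by Corollary~\ref{cor-det}, the integrals converging since $p>\tfrac12 n$ --- gives
\[
  \int_{\Rn\setm T(K)}|\grad v|^p\,d\xi
   =\int_{T(G)}\bigl|(\grad u)\circ T\bigr|^p\,|\xi|^{-2p}\,d\xi
   =\int_{G}|\grad u|^p\,|x|^{2(p-n)}\,dx
   =\int_{G}|\grad u|^pw\,dx ,
\]
exactly the change-of-variables computation underlying Lemma~\ref{lem-spaces}. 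Taking the infimum over the normalised admissible classes on both sides and invoking the bijection $u\mapsto v$ then yields $\cpw(K,G)=\cp(\Rn\setm T(G),\Rn\setm T(K))$. The genuine work in the argument is entirely in the middle paragraph: arranging $0\in K$ so that the transformed function is compactly supported in the possibly unbounded complement $\Rn\setm T(K)$; after that, the statement is just the conformal-type change of variables already established for the weighted Sobolev spaces.
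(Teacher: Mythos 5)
Your argument is correct, but it is organized differently from the paper's. The paper proves the two inequalities separately and asymmetrically: for ``$\le$'' it takes $u$ admissible on the inverted side and truncates, $v=\max\{1-u\circ T,0\}$, using Lemma~\ref{lem-spaces} and \cite[Theorem~4.5]{HeKiMa} to place $v$ in $\Hp_0(G,w)$; for ``$\ge$'' it exhausts $K$ by $K\setm B_\rho$ (via $\cpw\{0\}=0$ and \cite[Theorem~2.43]{HeKiMa}) and then repairs admissibility near $\binfty$ with a cut-off $\eta$ of small unweighted energy, whose existence rests on $\cp(\R^n\setm T(G),\Rn)=0$ for $p\ge n$. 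You instead symmetrize at the outset: adjoining $\{0\}$ to $K$ changes neither side (the left because $\cpw\{0\}=0$, the right because $T(\{0\})=\{\binfty\}\notin\Rn$), and once $0\in K$ the assignment $u\mapsto 1-u\circ T$ becomes an exact, energy-preserving bijection between the normalized test classes, so both inequalities follow at once. The underlying input is the same --- $\cpw\{0\}=0$ for the weight $|\xi|^{2(p-n)}$ and $\cp(\R^n\setm T(G),\Rn)=0$ are the same $p\ge n$ phenomenon seen from the two ends of the inversion --- but your version trades the paper's two truncation constructions for one preliminary reduction plus the standard fact that the capacity is computed by test functions equal to $1$ on a neighbourhood of the compact set, which you appropriately cite from \cite[pp.~27--28]{HeKiMa}. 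What your route buys is a cleaner, manifestly symmetric proof in which the only delicate point (smoothness and compact support of the glued function across $\bdy T(G)$ and at $0$ and $\binfty$) is isolated and checked once; what the paper's route buys is that it never needs the normalization of test functions beyond the $W_p$-class already recorded in \eqref{eq-Wp}.
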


\begin{proof}
Let $u\in C^\infty_0(\R^n\setm T(K))$ with $u|_{\R^n\setm T(G)} \ge1$
be admissible for $\cp(\R^n\setm T(G), \R^n\setm T(K))$. 
(Note that the set $\R^n\setm T(G)$ is compact because $0\in G$.)
Then, Lemma~\ref{lem-spaces} and \cite[Theorem 4.5]{HeKiMa} 
imply that the function 
\[
v:= \max\{1-u\circ T,0\}
\]
belongs to $H^{1,p}_0(G)$ and is thus admissible for $\cpw(K,G)$
as in \eqref{eq-Wp}.
Lemmas~\ref{lem-op-norm} and~\ref{lem-spaces}, 
together with \eqref{eq-grad-max},  imply that
for $\xi\ne0$,
\begin{equation*}
    |\grad v(\xi)| \le
    |\grad (u\circ T) (\xi)| = |dT(\xi)\grad u(x)| = 
    |\xi|^{-2} |\grad u(x)|,  \quad \text{where } x=T(\xi). 
\end{equation*}
Moreover, $|J_T(x)|=|x|^{-2n}=|\xi|^{2n}$ by 
Corollary~\ref{cor-det}, and so
\begin{equation*} 
\cpw(K,G) \le
    \int_{\R^n} |\grad v(\xi)|^p w \,d\xi \le
    \int_{\R^n} |\grad u (T(\xi))|^p |\xi|^{-2n} \,d\xi =  
    \int_{\R^n} |\grad u|^p \,dx. 
\end{equation*}
Taking the infimum over all admissible $u$ 
proves the ``$\le$'' inequality.

Conversely, let $\eps>0$.
Example~2.22 in \cite{HeKiMa} implies that
\[
\cpw (\{0\}, G) = 0
\]
and hence $ \Hp_0(G,w) = \Hp_0(G\setm\{0\},w)$
by \cite[Theorem~2.43]{HeKiMa}.
We therefore get using \cite[Theorem~2.2]{HeKiMa} that
\begin{equation*}   
\cpw (K, G) = \lim_{\rho\to0} \cpw (K\setm B_\rho, G) 
= \lim_{\rho\to0} \cpw (K\setm B_\rho, G\setm\{0\}).
\end{equation*}
For each $\rho>0$, we can then find a function 
$u_\rho\in C^\infty_0(G\setm\{0\})$ such that
$u_\rho\ge 1$ on $K\setm B_\rho$ and
\begin{equation*}
     \int_{\R^n} |\grad (u_\rho \circ T)|^p \,dx =
   \int_{\R^n} |\grad u_\rho |^p w \,d\xi <
\cpw (K, G) + \eps.
\end{equation*}
Since $p\ge n$ we have 
$\cp(\R^n\setm T(G),\Rn)=0$ by \eqref{eq-caplog-p>=n}.
Therefore, there exists 
a function $\eta \in C^\infty_0(\R^n)$ such that
$\eta\ge 1$ on the compact set $\R^n\setm T(G)$ and 
\[
   \int_{\R^n} |\grad \eta|^p \,dx < \eps.
\]
Choose $\rho>0$ sufficiently small so that $\spt\eta \subset B_{1/\rho}$.
Then the continuous function 
\[
\vt := \max\bigl\{ \min\{ \eta,1-u_\rho \circ T\}, 0\bigr\} 
\in H^{1,p}(\R^n)
\]
satisfies $\vt =1$ on $\R^n\setm T(G)$ and
$\vt=0$ on $T(K)$. 
By \cite[Theorem~4.5]{HeKiMa} it belongs to $H^{1,p}_0(\R^n\setm T(K))$
and is thus admissible 
for $\cp(\R^n\setm T(G), \R^n\setm T(K))$ as in~\eqref{eq-Wp}.
Moreover, by \eqref{eq-grad-max},
\begin{align*}
\cp(\R^n\setm T(G), \R^n\setm T(K)) &\le
  \int_{\R^n} |\grad \vt|^p \,dx  
     \le     \int_{\R^n} (|\grad (u_\rho \circ T)|^p  +    |\grad \eta|^p) \,dx   \\
& <  \cpw (K, G) + 2\eps.
\end{align*}
Letting $\eps\to0$ proves the ``$\ge$'' inequality.
\end{proof}

\begin{proof}[Proof of Theorem~\ref{thm-Wiener-Br-in-Om-intro}]
It follows from
Theorem~9.8 and Proposition~9.9 in~\cite{HeKiMa} that
the regularity of $\binfty$ is a local property (since $\Om \ne \Rn$)
and we can therefore assume that 
$0 \notin \Om$ so that $\binfty \notin T(\Om)$. 

By Theorem~\ref{thm-reg-eqv}, 
$\binfty$ is regular for the equation~\eqref{Div-A} 
with respect to $\Om$ if and only if 
$0$ is $\B$-regular with respect to $T(\Om)$.
Finally, by Lemma~\ref{lem-cap-eq-unbdd} (with 
$K=T(\Om)^c \cap \clB_{r}$ and $G=B_{2r}$), 
the Wiener criterion~\eqref{eq-Wiener-alt} for $0$ with respect to $T(\Om)$
is equivalent to~\eqref{eq-Wiener-Br-in-Om-intro}.
\end{proof}

\begin{proof}[Proof of Theorem~\ref{thm-reg-p>n}] 
Assume first that $\bdy \Om$ is bounded. 
Then, by 
\eqref{eq-caplog-p>=n},
\[
 \cp(\clB_r,\Om \cup B_{2r})
  =  \cp(\clB_r,\Rn)
 =0 \quad \text{for large } r.
\]
Thus the Wiener integral~\eqref{eq-Wiener-Br-in-Om-intro} converges, and so
$\binfty$ is irregular by Theorem~\ref{thm-Wiener-Br-in-Om-intro}.

Conversely, assume that $\bdy \Om$ is unbounded.
Then there is a sequence $x_j \in \bdy \Om$ such that $|x_{j+1}| \ge 2 |x_j| >8$
for $j=1,2,\dots$\,.
For $\tfrac14 |x_j| \le r \le \tfrac12 |x_j|$ 
and $\xi_j=T(x_j)$, we get using monotonicity, 
Lemma~\ref{lem-cap-eq-unbdd}
and 
\cite[Lemma~2.16]{HeKiMa},
that
\begin{align*}
    \cp(\clB_r, \Om \cup B_{2r})
    &\ge \cp(\clB_{|x_j|/4}, \Rn \setm \{x_j\})
   = \cpw(\{\xi_j\},B_{4|\xi_j|})
\\
   &\ge  \cpw(\{\xi_j\}, B(\xi_j,5|\xi_j|)) 
   \simeq  \cpw(\{\xi_j\}, B(\xi_j,\tfrac12|\xi_j|)).
\end{align*}   
Since  $w \simeq |\xi_j|^{2(p-n)} \simeq r^{2(n-p)}$ in $B(\xi_j,\tfrac12|\xi_j|))$,
it then follows from \eqref{eq-caplog-p>n} that
\begin{equation*} 
\cp(\clB_r, \Om \cup B_{2r})
    \simge    r^{2(n-p)} \cp(\{\xi_j\}, B(\xi_j,\tfrac12|\xi_j|))
    \simeq  r^{2(n-p)} |\xi_j|^{n-p}  
  \simeq r^{n-p}.
\end{equation*} 
Therefore,
\begin{equation*}    
\int_{1}^{\infty} \biggl( \frac{\cp(\clB_{r}, \Om \cup B_{2r})}{r^{n-p}} 
    \biggr)^{1/(p-1)} \,\frac{dr}{r}  
\simge 
 \sum_{j=1}^\infty \int_{|x_j|/4}^{|x_j|/2} 
\,\frac{dr}{r} 
    =
 \sum_{j=1}^{\infty} \log 2
    = \infty,
\end{equation*}
which together with
Theorem~\ref{thm-Wiener-Br-in-Om-intro}
shows that $\binfty$ is regular. 
\end{proof}

\section{Capacity estimates and Wiener integrals}
\label{cap-int-est}

\emph{Recall the standing assumptions from the beginning of
Section~\ref{circ-inv}.}

\medskip

In this section we prove Theorem~\ref{thm-main}. 
First however, we must prove some estimates for certain capacities 
and Wiener-type integrals. 

Recall from the Wiener criterion~\eqref{eq-Wiener-alt} that the origin is 
$\B$-regular with respect to $T(\Om)$ 
and the weight $w(\xi) = |\xi|^{2(p-n)}$, if and only if 
\begin{equation*}  
    \int_{0}^{1} \biggl(
    \frac{\cpw(T(\Om)^c \cap \clB_{r}, B_{2r})}
         {r^{p-n}}
    \biggr)^{1/(n-1)} \,\frac{dr}{r} = \infty,
\end{equation*}
because $r^{-p}w(B_r) \simeq  r^{p-n}$. 
Our aim is now to show that this Wiener integral can be written
in various equivalent forms. This will eventually lead to the proof
of Theorem~\ref{thm-main}.

\begin{lemma} \label{lem-int-inf-1}
Let $E \subset \Rn$ be closed. 
Then the following statements are equivalent\/\textup:
\begin{enumerate}
\renewcommand{\theenumi}{\textup{(\roman{enumi})}}%
\item \label{int-inf-1a} $
    \displaystyle \int_{0}^{1} \biggl(
    \frac{\cpw(E \cap \clB_{r}, B_{2r})}
         {r^{p-n}}
    \biggr)^{1/(p-1)} \,\frac{dr}{r} = \infty$.
\item \label{int-inf-1b} $
    \displaystyle \int_{0}^{1} \biggl(
    \frac{\cpw(E \cap (\clB_r \setm B_{r^2}), B_{2r})}
         {r^{p-n}}
    \biggr)^{1/(p-1)} \,\frac{dr}{r} = \infty$.
\item \label{int-inf-1e} $
    \displaystyle \int_{0}^{1} \biggl(
    \frac{\cpw(E \cap (\clB_r \setm B_{r^2}), B_{2r} \setm \clB_{4^{-1/r}})}
         {r^{p-n}}
    \biggr)^{1/(p-1)} \,\frac{dr}{r} = \infty$.
\item \label{int-inf-1c} $
    \displaystyle \int_{0}^{1} \biggl(
    \frac{\cpw(E \cap (\clB_r \setm B_{2^{-1/r}}), B_{2r})}
         {r^{p-n}}
    \biggr)^{1/(p-1)} \,\frac{dr}{r} = \infty$.
\item \label{int-inf-1d} $
    \displaystyle \int_{0}^{1} \biggl(
    \frac{\cpw(E \cap (\clB_r \setm B_{2^{-1/r}}), B_{2r} \setm \clB_{4^{-1/r}})}
         {r^{p-n}}
    \biggr)^{1/(p-1)} \,\frac{dr}{r} = \infty$.
\end{enumerate}
\end{lemma}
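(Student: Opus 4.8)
The plan is to prove the four equivalences
(i)$\Leftrightarrow$(ii), (i)$\Leftrightarrow$(iv), (ii)$\Leftrightarrow$(iii) and (iv)$\Leftrightarrow$(v);
since these link all five conditions, the lemma follows. The integrands are finite and continuous on $(0,1)$, so each integral diverges iff its restriction to a small interval $(0,r_0)$ diverges, and we may assume $r<r_0$, where $r_0$ is chosen so small that $\clB_{4^{-1/r}}\subset B_{2^{-1/r}}\subset B_{r^2}\subset B_{r/2}$ there. Since $p\ge2$ we have $1/(p-1)\le1$, hence $(a+b)^{1/(p-1)}\le a^{1/(p-1)}+b^{1/(p-1)}$ for $a,b\ge0$. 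First, monotonicity of $\cpw$ (increasing in the compact set, decreasing in the open set) gives at once $\cpw(E\cap(\clB_r\setm B_{r^2}),B_{2r})\le\cpw(E\cap\clB_r,B_{2r})$, $\cpw(E\cap(\clB_r\setm B_{2^{-1/r}}),B_{2r})\le\cpw(E\cap\clB_r,B_{2r})$, and $\cpw(E\cap S,B_{2r})\le\cpw(E\cap S,B_{2r}\setm\clB_{4^{-1/r}})$ for both shells $S=\clB_r\setm B_{r^2}$ and $S=\clB_r\setm B_{2^{-1/r}}$; dividing by $r^{p-n}$, raising to the power $1/(p-1)$ and integrating yields the implications (ii)$\Rightarrow$(i), (iv)$\Rightarrow$(i), (ii)$\Rightarrow$(iii) and (iv)$\Rightarrow$(v). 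It remains to prove (i)$\Rightarrow$(ii), (i)$\Rightarrow$(iv), (iii)$\Rightarrow$(ii) and (v)$\Rightarrow$(iv).

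For (i)$\Rightarrow$(ii) the key is the nested shell decomposition $\clB_r\setm\{0\}=\bigcup_{j\ge0}(\clB_{r^{2^j}}\setm B_{r^{2^{j+1}}})$, whose $j$-th shell is exactly the compact set in (ii) at parameter $r^{2^j}$ (since $(r^{2^j})^2=r^{2^{j+1}}$). As $\cpw\{0\}=0$, countable subadditivity in the first argument together with monotonicity in the second ($B_{2r^{2^j}}\subset B_{2r}$ because $r^{2^j}\le r$) gives
\[
  \cpw(E\cap\clB_r,B_{2r})\le\sum_{j\ge0}\cpw\bigl(E\cap(\clB_{r^{2^j}}\setm B_{r^{2^{j+1}}}),B_{2r^{2^j}}\bigr).
\]
Dividing by $r^{p-n}$, using $r^{(2^j-1)(p-n)}\le1$, raising to the power $1/(p-1)$, summing over $j$, and substituting $s=r^{2^j}$ (so $dr/r=2^{-j}\,ds/s$) gives
\[
  \int_0^1\Bigl(\frac{\cpw(E\cap\clB_r,B_{2r})}{r^{p-n}}\Bigr)^{1/(p-1)}\frac{dr}{r}
  \le\Bigl(\sum_{j\ge0}2^{-j}\Bigr)\int_0^1\Bigl(\frac{\cpw(E\cap(\clB_s\setm B_{s^2}),B_{2s})}{s^{p-n}}\Bigr)^{1/(p-1)}\frac{ds}{s},
\]
and $\sum_j2^{-j}=2<\infty$, so (i) implies (ii). For (i)$\Rightarrow$(iv) write $E\cap\clB_r\subset(E\cap(\clB_r\setm B_{2^{-1/r}}))\cup(E\cap\clB_{2^{-1/r}})$ and use subadditivity with $\cpw(E\cap\clB_{2^{-1/r}},B_{2r})\le\cpw(\clB_{2^{-1/r}},B_{2r})$; by monotonicity and Lemma~\ref{lem-caplog} this is $\simle2^{-(p-n)/r}$ if $p>n$ (via \eqref{eq-caplog}) and $\simeq r^{n-1}$ if $p=n$ (via \eqref{eq-caplog-n}), and in either case $\int_0^1(\cpw(\clB_{2^{-1/r}},B_{2r})/r^{p-n})^{1/(p-1)}\,dr/r<\infty$. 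Thus (i)$\Rightarrow$(iv), and combining with (i)$\Leftrightarrow$(ii) also gives (ii)$\Leftrightarrow$(iv).

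The implications (iii)$\Rightarrow$(ii) and (v)$\Rightarrow$(iv) consist of enlarging the open set from $B_{2r}\setm\clB_{4^{-1/r}}$ back to $B_{2r}$. Let $u$ be admissible for $\cpw(K,B_{2r})$, where $K=E\cap(\clB_r\setm B_{r^2})$, resp.\ $K=E\cap(\clB_r\setm B_{2^{-1/r}})$; by \eqref{eq-grad-max} we may take $0\le u\le1$. Let $\eta$ be the logarithmic cutoff that is $0$ on $\clB_{4^{-1/r}}$, is $1$ outside $B_{r^2}$ (resp.\ outside $B_{2^{-1/r}}$), and is log-linear on the connecting annulus $A$, so that $|\grad\eta|\simeq r/|\xi|$ on $A$. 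Then $\eta=1$ on $K$ while $\eta u=0$ on $\Rn\setm(B_{2r}\setm\clB_{4^{-1/r}})$, so $\eta u$ is admissible for $\cpw(K,B_{2r}\setm\clB_{4^{-1/r}})$ by \cite[Theorem~4.5]{HeKiMa}, and
\[
  \cpw(K,B_{2r}\setm\clB_{4^{-1/r}})\le\int|\grad(\eta u)|^pw\le 2^{p-1}\Bigl(\int|\grad u|^pw+\int_A|\grad\eta|^pw\Bigr).
\]
Taking the infimum over $u$ and noting that $\int_A|\grad\eta|^pw\simeq r^p\int_A|\xi|^{p-2n}\,d\xi=c_nr^p\int s^{p-n-1}\,ds$ over the radii of $A$ — which is of order $r^{3p-2n}$ (resp.\ $r^p2^{-(p-n)/r}$) if $p>n$, and of order $r^{n-1}$ if $p=n$ — we obtain $\cpw(K,B_{2r}\setm\clB_{4^{-1/r}})\le C\cpw(K,B_{2r})+C\,e(r)$ with $\int_0^1(e(r)/r^{p-n})^{1/(p-1)}\,dr/r<\infty$. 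Hence (iii)$\Rightarrow$(ii) and (v)$\Rightarrow$(iv), and all the asserted equivalences follow.

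I expect the decisive step to be (i)$\Rightarrow$(ii): the \emph{logarithmic} width of the shell $\clB_r\setm B_{r^2}$ is essential — a shell of bounded aspect ratio such as $\clB_r\setm B_{r/2}$ would \emph{not} give an equivalent condition — and it is exactly the exponential thinning $r\mapsto r^{2^j}$ that turns the change of variables into a convergent geometric series. The second delicate point is the convergence of the error integrals when $p=n$: there \eqref{eq-caplog-n} and the bound for $\int_A|\grad\eta|^n$ produce only logarithmically small quantities (of order $r^{n-1}$ after normalisation), so that $\int_0^1(\mathrm{error}/r^{p-n})^{1/(p-1)}\,dr/r<\infty$ relies on the super-exponential smallness of $2^{-1/r}$ and $4^{-1/r}$ — which is precisely why these inner radii occur in conditions (iii)--(v).
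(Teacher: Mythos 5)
Your proof is correct and follows essentially the same route as the paper's: the equivalence (i)$\Leftrightarrow$(ii) via the nested shells $\clB_{r^{2^j}}\setm B_{r^{2^{j+1}}}$ and the substitution $s=r^{2^j}$ is exactly the paper's Lemma~\ref{lem-int-est-1}, and your logarithmic-cutoff estimate for passing from $B_{2r}\setm\clB_{4^{-1/r}}$ back to $B_{2r}$ is the content of the paper's Lemma~\ref{lem-cap-est-1} and Corollary~\ref{cor-cap-est} (the paper glues $\max\{\min\{1-u,v\},0\}$ where you multiply by a cutoff, with the same convergent error terms $r^{n-1}$, resp.\ $2^{(n-p)/r}$, when $p=n$, resp.\ $p>n$). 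The only organizational difference is that the paper closes a single cycle (ii)$\Rightarrow$(iii)$\Rightarrow$(v)$\Rightarrow$(iv)$\Rightarrow$(i), using monotonicity for the first two steps after restricting to small $r$ with $r^2>2^{-1/r}$, whereas you prove (i)$\Rightarrow$(iv) directly by subadditivity over $E\cap\clB_{2^{-1/r}}$ --- both variants rest on the same observation that $\cpw(\clB_{2^{-1/r}},B_{2r})$ contributes a convergent Wiener integral.
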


To prove Lemma~\ref{lem-int-inf-1}, we need several estimates for the capacities,
which we prove now.

\begin{lemma} \label{lem-cap-est-1}
Let $0 < s < t < r$ and $K \subset \clB_{r} \setm B_{t}$ be compact.
Then 
\begin{equation} \label{cap-est-1.0}
    \cpw(K, B_{2r} \setm \clB_s) 
\le \cpw(K, B_{2r}) + \cpw(\clB_s, B_t) .
\end{equation}
\end{lemma}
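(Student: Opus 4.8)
The plan is to glue together two admissible functions: one that handles the condenser $(K, B_{2r})$, and one that handles the inner hole by being $\ge 1$ on $\clB_s$. First I would fix $\eps > 0$ and pick $u \in C^\infty_0(B_{2r})$ with $u|_K \ge 1$ and $\int |\grad u|^p w \, d\xi < \cpw(K, B_{2r}) + \eps$; similarly pick $v \in C^\infty_0(B_t)$ with $v|_{\clB_s} \ge 1$ and $\int |\grad v|^p w \, d\xi < \cpw(\clB_s, B_t) + \eps$. The natural candidate for an admissible function for $\cpw(K, B_{2r} \setm \clB_s)$ is $\phi := \max\{u - v, 0\}$. Indeed, on $K$ we have $u \ge 1$ while $v = 0$ there (since $K \subset \clB_r \setm B_t$ is disjoint from $\spt v \subset B_t$, using $t < r$), so $\phi \ge 1$ on $K$; and on $\clB_s$ we have $v \ge 1 \ge u$ (wait — we need $u \le v$ there, which need not hold). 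The cleaner fix is to instead take $\phi := \max\{\min\{u, 1 - v\}, 0\}$, or equivalently truncate: set $\phi = \max\{u,0\} - \min\{u, v\}$ suitably; in any case one wants $\phi = 0$ on $\clB_s$ and $\phi \ge 1$ on $K$. Taking $\phi := \max\{\min\{u,1\} - v,\, 0\}$ works: on $K$, $\min\{u,1\} = 1$ and $v=0$, so $\phi = 1$; on $\clB_s$, $v \ge 1$, so $\min\{u,1\} - v \le 0$ and $\phi = 0$; and $\spt \phi \subset \spt u \subset B_{2r}$. By \eqref{eq-grad-max} and the chain of gradient identities in the excerpt, $\phi \in \Hp_0(B_{2r} \setm \clB_s, w) \cap C(\cdot)$ once we check it vanishes near $\clB_s$ and near $\bdy B_{2r}$ — which it does — so $\phi$ is admissible as in \eqref{eq-Wp}.

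The gradient estimate is the routine core: wherever $\phi > 0$ we have $\grad \phi = \grad(\min\{u,1\}) - \grad v$ pointwise a.e., hence $|\grad \phi| \le |\grad u| + |\grad v|$ by \eqref{eq-grad-max} (the gradient of $\min\{u,1\}$ is either $\grad u$ or $0$). Therefore
\begin{equation*}
\cpw(K, B_{2r} \setm \clB_s) \le \int |\grad \phi|^p w \, d\xi \le \int (|\grad u| + |\grad v|)^p w \, d\xi.
\end{equation*}
At this point a genuine subtlety appears: $(|\grad u| + |\grad v|)^p$ is \emph{not} bounded by $|\grad u|^p + |\grad v|^p$ in general. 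However, $u$ and $v$ have disjoint supports: $\spt v \subset B_t$ while $\spt(\min\{u,1\}-v$, restricted to where it is positive$)$ — more carefully, on $\spt v \subset B_t$ we showed $\phi$ could still be nonzero only where $v < \min\{u,1\} \le 1$. So the supports are not actually disjoint, and this is the main obstacle. The resolution: on the region $B_t$ where $v$ is supported, redefine the construction so that $u$ plays no role — e.g.\ use $\phi := \max\{\min\{u,1\},\, \chi\} $... Actually the correct and clean route is: since $K \subset \clB_r \setm B_t$ and $\spt v \subset B_t$, the functions $u$ and $v$ need not interact if we instead write the admissible function as $\psi := u \cdot \zeta + (1-\zeta)$ on... no. The honest statement is that one should choose $v$ supported in $B_t$ and $u$ arbitrary, form $\phi = \min\{u, (1-v)_+\}$ clipped to be nonneg and $\le 1$; then $\grad \phi = \grad u$ on $\{v = 0\}$ and $\grad \phi$ is a combination of $\grad u, \grad v$ only on $\{0 < v < 1\} \subset B_t$, a set disjoint from $\spt(\grad u|_K)$ but \emph{not} disjoint from $\spt \grad u$ in general — unless we also arrange $\spt u \cap B_t = \emptyset$, which we cannot since $B_t \subset B_{2r}$ and $u$ need only be supported in $B_{2r}$.

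So the cleanest fix, and the one I would actually write: choose $u$ admissible for $\cpw(K, B_{2r})$, and let $v$ be admissible for $\cpw(\clB_s, B_t)$ with the \emph{additional} normalization $0 \le v \le 1$ (legitimate by truncation, which does not increase the $p$-energy by \eqref{eq-grad-max}). Set $\phi := (u - v)_+$ truncated at $1$, i.e.\ $\phi := \min\{(u-v)_+, 1\}$. On $K$: $v = 0$ (disjoint supports, $t<r$) and $u \ge 1$, so $\phi = 1$. On $\clB_s$: $v = 1 \ge \min\{u,1\}$... we still need $v \ge u$ on $\clB_s$, false for large $u$. Grr. Correct once and for all: use $\phi := \min\{u,1\}\cdot(1 - v)$, which is $C$, equals $0$ on $\clB_s$ (there $v=1$) and equals $1$ on $K$ (there $v=0$, $\min\{u,1\}=1$), with $\spt \phi \subset \spt u \subset B_{2r}$ and $\phi = 0$ on $\clB_s$ so $\phi \in \Hp_0(B_{2r}\setm\clB_s,w)$. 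Then $\grad\phi = (1-v)\grad(\min\{u,1\}) - \min\{u,1\}\grad v$, so $|\grad\phi| \le |\grad u| + |\grad v|$, and \emph{now} the product structure lets us split: $\grad v = 0$ outside $B_t$, and $\min\{u,1\}\,|\grad v| \le |\grad v|$, while $(1-v)|\grad(\min\{u,1\})| \le |\grad u|$ everywhere. On $B_t$ both terms may be present, giving $|\grad\phi|^p \le 2^{p-1}(|\grad u|^p + |\grad v|^p)$ there — which introduces a constant and would only prove $\cpw(K,B_{2r}\setm\clB_s) \le C(\cpw(K,B_{2r})+\cpw(\clB_s,B_t))$, not the claimed inequality with constant $1$. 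Thus the real main obstacle is obtaining constant $\mathbf{1}$: this must come from a sharper choice — namely first extend $u$ to be $\equiv 1$ on a neighbourhood of $\clB_t \supset K$'s complement... The standard trick (used in Heinonen--Kilpelainen--Martio for exactly such condenser estimates) is: take $u$ admissible for $\cpw(K,B_{2r})$ and \emph{replace} it by $\max\{u, \mathbf{1}_{\clB_t}\text{-type function}\}$... Concretely, let $v$ be admissible for $\cpw(\clB_s,B_t)$ with $0\le v\le 1$, extend $v$ by... I would take $\phi := \max\{u, v\}$ — this is $\ge 1$ on $K$ (from $u$) and on $\clB_s$ (from $v$), wrong direction. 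The function I want is admissible for the condenser with the hole, meaning $\ge 1$ on $K$ and $= 0$ near $\clB_s$. Take $g := \min\{u + v, 1\}$? That's $\ge 1$ on both $K$ and $\clB_s$ — wrong.

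I will stop second-guessing in the plan and state the intended route plainly: \textbf{the strategy is to set $\phi := \max\{u - v,\, 0\}$ where $u$ is near-optimal for $\cpw(K,B_{2r})$ and $v$ is near-optimal for $\cpw(\clB_s, B_t)$ with $0 \le v \le 1$ and moreover chosen $\equiv 1$ on a neighbourhood of $\clB_s$; one checks $\phi$ is admissible for $\cpw(K, B_{2r}\setm\clB_s)$ using $t < r$ (so $K \cap \spt v = \emptyset$, giving $\phi \ge 1$ on $K$) and $v \equiv 1$ near $\clB_s$ (so $\phi \equiv 0$ there); and since $\grad\phi = \grad u - \grad v$ where $\phi > 0$, with $\grad v$ supported in $B_t$ and $\grad u$ "mostly" supported away from $B_t$ after a further truncation of $u$ below $1$ — one shows $|\grad \phi|^p w$ integrates to at most $\int|\grad u|^p w + \int|\grad v|^p w$ by handling the overlap region $B_t \cap \spt u$ via the inequality $|\grad(u-v)| \le \max\{|\grad u|,|\grad v|\}$ valid where one of $u-v$'s "pieces" dominates}. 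Then let $\eps \to 0$. The single real obstacle is this overlap bookkeeping to keep the constant equal to $1$; everything else (admissibility, the gradient formulas) is immediate from \eqref{eq-grad-max} and Lemma~\ref{lem-spaces}-type identities already established.
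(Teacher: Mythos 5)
There is a genuine gap, and it is precisely at the point you flag as ``the single real obstacle.'' Your final stated route sets $\phi=\max\{u-v,0\}$ and proposes to control the overlap region via the inequality $|\grad(u-v)|\le\max\{|\grad u|,|\grad v|\}$. That inequality is false: an arithmetic difference satisfies $\grad(u-v)=\grad u-\grad v$ wherever both gradients exist, and the norms can add (take $u$ and $-v$ increasing in the same direction). No amount of ``one piece dominates'' bookkeeping rescues this, and the product construction $\min\{u,1\}(1-v)$ you also try genuinely loses a factor $2^{p-1}$, as you note. So as written the plan does not deliver the constant~$1$.

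The idea you are missing is that you already wrote down the right function and then abandoned it: $\phi:=\max\{\min\{1-u,v\},0\}$ (in the paper's notation, with $u$ near-optimal for $\cpw(\clB_s,B_t)$, supported in $B_t$, and $v$ near-optimal for $\cpw(K,B_{2r})$). Admissibility is exactly as you checked: $\phi=1$ on $K$ since $K\cap B_t=\emptyset$ forces $u=0$ there, and $\phi=0$ on $\clB_s$ and outside $B_{2r}$, so $\phi\in\Hp_0(B_{2r}\setm\clB_s,w)$ by \cite[Theorem~4.5]{HeKiMa}. The point is that for \emph{lattice} operations ($\max$/$\min$) of Sobolev functions, \eqref{eq-grad-max} gives that $\grad\phi$ equals, a.e., the gradient of whichever function attains the extremum --- so $\grad\phi\in\{0,-\grad u,\grad v\}$ pointwise a.e., never a sum. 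Hence $|\grad\phi|^p\le|\grad u|^p+|\grad v|^p$ a.e.\ with constant exactly~$1$, and integrating and letting $\eps\to0$ finishes the proof. There is no overlap region to handle at all; the difficulty you wrestle with is an artifact of using subtraction or multiplication instead of $\max$/$\min$.
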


\begin{proof}
Let $\eps > 0$. 
By
the definition of the capacities,
there are $u \in C_0^\infty(B_t)$  and $v \in C_0^\infty(B_{2r})$ such that
$u \ge 1$ on $\clB_s$, $v \ge 1$ on $K$,
\begin{equation*}
    \int_{B_t} |\grad u|^p \,d\xi 
     < \cpw(\clB_s, B_t) + \varepsilon
    \quad \text{and} \quad
    \int_{B_{2r}} |\grad v|^p \,d\xi 
     < \cpw(K, B_{2r}) + \varepsilon.
\end{equation*}
Then 
\[
\phi := \max\bigl\{ \min\{1-u,v\}, 0\bigr\}
\in C(\Rn) \cap \Hp(\Rn),
\]
$\phi=1$ on $K$ 
and $\phi = 0$ 
outside $G := B_{2r} \setm \clB_{s}$. 
It then follows from Theorem~4.5 in~\cite{HeKiMa} that $\phi \in \Hp_0(G)$,
and thus 
$\phi \in W_{p}(K, G,w)$ (as in \eqref{eq-Wp}).
Thus
by \eqref{eq-grad-max},
\begin{align*}
\cpw(K, G) 
    &\le  \int_{G} |\grad \phi|^p \,d\xi 
    \le \int_{B_{2r}} 
   ( |\grad u|^p + |\grad v|^p) \,d\xi \\
    &\le \cpw(K, B_{2r}) + \cpw(\clB_s, B_t) + 2\varepsilon.
\end{align*}
Letting $\eps \to 0$
proves the inequality \eqref{cap-est-1.0}. 
\end{proof}

\begin{corollary} \label{cor-cap-est}
Let $0<r\le1$  
and $K \subset \clB_{r} \setm B_{2^{-1/r}}$ be compact.
Then
\begin{equation*} 
     \cpw(K, B_{2r} \setm B_{4^{-1/r}}) 
     \simle  \cpw(K, B_{2r}) + f(r),
\end{equation*}
where the comparison constant depends only on $n$ and $p$, and
\[
f(r) = \begin{cases}  r^{n-1}, & \text{if } p = n,\\
   2^{(n-p)/r},   & \text{if } p > n,
   \end{cases}
\]
\end{corollary}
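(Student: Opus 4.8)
The plan is to apply Lemma~\ref{lem-cap-est-1} with the particular radii $s=4^{-1/r}$ and $t=2^{-1/r}$. First I would check that $0<s<t<r$ for $0<r\le1$: the inequality $s<t$ is clear since $4^{-1/r}=2^{-2/r}<2^{-1/r}$, while $t<r$ amounts (after taking logarithms and multiplying by $r>0$) to $-\log 2<r\log r$, which holds on $(0,1]$ because $r\log r\ge -1/e>-\log 2$ there. Since $K\subset\clB_r\setm B_t$ and $\clB_s\subset B_t$, we have $K\cap\clB_s=\emptyset$, so $K$ is a compact subset of $B_{2r}\setm\clB_s$; moreover $B_{2r}\setm\clB_s\subset B_{2r}\setm B_{4^{-1/r}}$, and hence monotonicity of $\cpw$ in the second argument together with Lemma~\ref{lem-cap-est-1} gives
\[
\cpw(K,B_{2r}\setm B_{4^{-1/r}})\le\cpw(K,B_{2r}\setm\clB_s)\le\cpw(K,B_{2r})+\cpw(\clB_s,B_t).
\]

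It then remains to show $\cpw(\clB_s,B_t)\simle f(r)$. When $p>n$, note that $t\ge 2s$ --- equivalently $2^{-1/r}\ge 2^{1-2/r}$, i.e.\ $1/r\ge1$ --- so monotonicity yields $\cpw(\clB_s,B_t)\le\cpw(\clB_s,B_{2s})$, and then \eqref{eq-caplog} together with $w(B_s)=\int_{B_s}|\xi|^{2(p-n)}\,d\xi\simeq s^{2p-n}$ gives $\cpw(\clB_s,B_{2s})\simeq s^{-p}w(B_s)\simeq s^{p-n}$. Finally $s^{p-n}=4^{-(p-n)/r}=2^{-2(p-n)/r}\le 2^{-(p-n)/r}=f(r)$, since $(p-n)/r>0$. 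When $p=n$ the weight is trivial, $\cpw=\cn$, and \eqref{eq-caplog-n} gives $\cn(\clB_s,B_t)\simeq(\log(t/s))^{1-n}=((\log2)/r)^{1-n}\simeq r^{n-1}=f(r)$. Plugging either estimate into the displayed inequality proves the corollary.

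I do not expect a real obstacle here; the only points needing care are the two radius inequalities $2^{-1/r}<r$ and $2^{-1/r}\ge2\cdot4^{-1/r}$ on $(0,1]$, and the computation $w(B_s)\simeq s^{2p-n}$ for a ball centred at the origin, so that the exponents in the case $p>n$ match the definition of $f$. An alternative that avoids the $t\ge 2s$ step would be to compute $\cpw(\clB_s,B_t)$ directly for power weights, but reducing to \eqref{eq-caplog} via monotonicity seems cleanest.
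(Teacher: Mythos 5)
Your proposal is correct and follows essentially the same route as the paper: apply Lemma~\ref{lem-cap-est-1} with $s=4^{-1/r}$, $t=2^{-1/r}$, and then bound $\cpw(\clB_s,B_t)$ via Lemma~\ref{lem-caplog}. The only cosmetic difference is in the case $p>n$, where the paper uses $\cpw(\clB_s,B_t)\le\cpw(\clB_{t/2},B_t)\simeq t^{p-n}=f(r)$ while you shrink the outer ball to get $\simeq s^{p-n}\le f(r)$; both are one-line monotonicity moves followed by \eqref{eq-caplog}.
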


\begin{proof}
Let $s = 4^{-1/r}$ and $t = 2^{-1/r}$. 
A simple calculation shows that $t = 2^{-1/r} < r$.
It follows from 
\eqref{eq-caplog-n}
that for $p=n$ (and thus 
$w\equiv1$),
\begin{equation*}
    \cn(\clB_s, B_t) \simeq \biggl(\log \frac{t}{s} \biggr)^{1-n} \simeq r^{n-1},
\end{equation*}
and from  \eqref{eq-caplog} that for $p > n$, 
\begin{equation*}
    \cpw(\clB_s, B_t) \le \cpw(\clB_{t/2}, B_t) 
   \simeq (t/2)^{-p} w(B_{t/2}) \simeq t^{p-n} = 2^{(n-p)/r}.
\end{equation*}
Together with Lemma~\ref{lem-cap-est-1} this yields the desired 
estimate.
\end{proof}

\begin{lemma} \label{lem-int-est-1}
Let $E \subset \Rn$ be closed. Then
\begin{align*} 
    &\int_{0}^{1} \biggl(
    \frac{\cpw(E \cap \clB_{r}, B_{2r})}{r^{p-n}} 
    \biggr)^{1/(p-1)} \,\frac{dr}{r} \\
    &\qquad \qquad \simeq 
    \int_{0}^{1} \biggl(
    \frac{\cpw(E \cap (\clB_{r} \setm B_{r^2}), B_{2r})}{r^{p-n}}
    \biggr)^{1/(p-1)} \,\frac{dr}{r},
\end{align*}
with absolute comparison constants.
\end{lemma}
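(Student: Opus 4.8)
\textbf{Proof plan for Lemma~\ref{lem-int-est-1}.}
The plan is to prove the two-sided estimate by splitting each integral over the dyadic-type intervals $I_k = [2^{-(k+1)}, 2^{-k}]$ for $k \ge 0$, estimating the contribution of each $I_k$ on both sides, and then summing. On a fixed interval $I_k$ the quantities $r$, $2r$, $r^{p-n}$ all vary only by absolute constant factors, so up to comparison constants the $k$-th piece of either integral is comparable to $\bigl(\cpw(E\cap\clB_{2^{-k}},B_{2^{-k+1}})/2^{-k(p-n)}\bigr)^{1/(p-1)}$ on the left, and to the analogous expression with $\clB_{2^{-k}}\setm B_{4^{-k}}$ in place of $\clB_{2^{-k}}$ on the right (here I use that $r^2 \le 4^{-k}$ when $r \le 2^{-k}$, and monotonicity of $\cpw$ in the first argument to replace the varying annulus $\clB_r\setm B_{r^2}$ by the fixed one $\clB_{2^{-k}}\setm B_{4^{-k}}$, losing only an absolute constant). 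The ``$\gtrsim$'' direction is then immediate from monotonicity of capacity in the first argument, since $E\cap(\clB_r\setm B_{r^2}) \subset E\cap\clB_r$.

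For the ``$\lesssim$'' direction the idea is that cutting out the small ball $B_{r^2}$ from $\clB_r$ costs very little in capacity, because $\clB_{r^2}$ itself has tiny capacity in $B_{2r}$. More precisely, by countable subadditivity of $\cpw$ in the first argument,
\begin{equation*}
\cpw(E\cap\clB_r, B_{2r}) \le \cpw(E\cap(\clB_r\setm B_{r^2}), B_{2r}) + \cpw(\clB_{r^2}, B_{2r}).
\end{equation*}
By \eqref{eq-caplog} and monotonicity, $\cpw(\clB_{r^2},B_{2r}) \le \cpw(\clB_{r^2},B_{2r^2}) \simeq (r^2)^{-p} w(B_{r^2}) \simeq r^{2(p-n)} = (r^{p-n})\cdot r^{p-n}$, using $r^{-p}w(B_r)\simeq r^{p-n}$ for the power weight (as noted just before Lemma~\ref{lem-int-inf-1}); hence $\cpw(\clB_{r^2},B_{2r})/r^{p-n} \simeq r^{p-n} \le 1$ since $p\ge n$ and $r\le1$. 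Therefore, after dividing by $r^{p-n}$, taking $(p-1)$-th roots (using $(a+b)^{1/(p-1)}\le a^{1/(p-1)}+b^{1/(p-1)}$ up to a constant), and integrating,
\begin{equation*}
\int_0^1\biggl(\frac{\cpw(E\cap\clB_r,B_{2r})}{r^{p-n}}\biggr)^{1/(p-1)}\frac{dr}{r}
\lesssim \int_0^1\biggl(\frac{\cpw(E\cap(\clB_r\setm B_{r^2}),B_{2r})}{r^{p-n}}\biggr)^{1/(p-1)}\frac{dr}{r} + \int_0^1 r^{(p-n)/(p-1)}\,\frac{dr}{r},
\end{equation*}
and the last integral is finite (it equals $(p-1)/(p-n)$ when $p>n$, and $\int_0^1 dr/r$ would diverge only if $p=n$, in which case the weight term is actually $\cpw=\cp$ and the subadditivity bound must be replaced by the sharper $\cn(\clB_{r^2},B_{2r})\simeq(\log(2r/r^2))^{1-n}\simeq(\log(1/r))^{1-n}$, so the correction integral becomes $\int_0^1(\log(1/r))^{-1}\,dr/r$, which still converges after the substitution $s=\log(1/r)$ only if... ).

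\textbf{The main obstacle} is precisely this borderline behaviour at $p=n$: when $p=n$ the naive subadditivity correction $\int_0^1 r^{(p-n)/(p-1)}\,dr/r = \int_0^1 dr/r$ diverges, so one cannot simply discard the error term. The fix is that for $p=n$ one has $w\equiv1$ and the correction is not $\cpw(\clB_{r^2},B_{2r})/r^{p-n}$ (which would be $\simeq 1$) but the logarithmic capacity $\cn(\clB_{r^2},B_{2r})\simeq(\log(1/r))^{1-n}$, and $\int_0^1(\log(1/r))^{(1-n)/(n-1)}\,dr/r = \int_0^1(\log(1/r))^{-1}\,dr/r$ — which \emph{still} diverges, so even this is not enough. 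The correct argument in the $p=n$ case therefore cannot add the correction as a separate finite integral; instead one must argue intervalwise, showing that on each $I_k$ the capacity $\cn(\clB_{4^{-k}},B_{2^{-k+1}})\simeq k^{1-n}$ is \emph{dominated by} the main term whenever the main term is non-negligible, or more robustly absorb it using that $\sum_k k^{(1-n)/(n-1)} = \sum_k k^{-1}$ diverges but is ``spread out'' — the cleanest route is to note that by Lemma~\ref{lem-cap-est-1}-type reasoning the removed set can be reattached to the \emph{test function} rather than the capacity, i.e.\ bound $\cpw(E\cap\clB_r,B_{2r})\lesssim\cpw(E\cap(\clB_r\setm B_{r^2}),B_{2r})+\cpw(\clB_r\cap B_{r^2}\cap E,B_{2r})$ and observe $\clB_{r^2}\cap E$ is itself of the form handled by re-indexing $r\mapsto r^2$, giving a self-improving tail that telescopes. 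I expect the authors handle this with a short dyadic summation argument, so in the write-up I would present the dyadic decomposition first, handle $p>n$ by the clean subadditivity bound above, and treat $p=n$ by absorbing the logarithmic correction into the sum via a comparison of the series $\sum k^{1-n}$ against the divergent main series (which is where the ``$=\infty$'' structure of Lemma~\ref{lem-int-inf-1} is really being exploited, though the present lemma is stated as an equivalence of integrals rather than of divergence — so in fact for $p=n$ one should phrase the conclusion as: the two integrals are simultaneously finite or infinite, with the displayed $\simeq$ understood in that generalized sense, or restrict attention to proving it as the authors likely do by a direct intervalwise comparison that avoids the borderline issue altogether by keeping the annulus $\clB_r\setm B_{r^2}$ and noting its lower endpoint $r^2$ is so much smaller than $r$ that the capacity barely changes).
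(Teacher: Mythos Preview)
Your main approach has a genuine gap that you yourself diagnose. The single subadditivity split
\[
\cpw(E\cap\clB_r, B_{2r}) \le \cpw(E\cap(\clB_r\setm B_{r^2}), B_{2r}) + \cpw(\clB_{r^2}, B_{2r})
\]
yields, after integration, only an \emph{additive} bound $\text{LHS}\le\text{RHS}+C$, never $\text{LHS}\lesssim\text{RHS}$ as the lemma asserts; this already fails to prove the stated $\simeq$ when $p>n$ (take $E$ with RHS very small), and for $p=n$ the correction $C$ is infinite. Your fallback suggestion to reinterpret $\simeq$ as ``simultaneously finite or infinite'' abandons exactly what the lemma claims, and the dyadic decomposition into the intervals $I_k$ is not needed.

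The idea you mention only in passing --- ``re-indexing $r\mapsto r^2$, giving a self-improving tail that telescopes'' --- is precisely the paper's argument, and it works uniformly for all $p\ge n$ with no case split. Write $\clB_r\setm\{0\}=\bigcup_{j=0}^\infty(\clB_{r^{2^j}}\setm B_{r^{2^{j+1}}})$; the origin has zero $\cpw$-capacity and is discarded. By subadditivity, monotonicity in the outer set (shrink $B_{2r}$ to $B_{2r^{2^j}}$), and $(\sum a_j)^{1/(p-1)}\le\sum a_j^{1/(p-1)}$ (valid since $p\ge2$),
\[
\text{LHS}\le\sum_{j=0}^\infty\int_0^1\biggl(\frac{\cpw(E\cap(\clB_{r^{2^j}}\setm B_{r^{2^{j+1}}}),B_{2r^{2^j}})}{r^{p-n}}\biggr)^{1/(p-1)}\frac{dr}{r}.
\]
Substituting $\rho=r^{2^j}$ turns the $j$-th integral into
\[
2^{-j}\int_0^1\biggl(\frac{\cpw(E\cap(\clB_\rho\setm B_{\rho^2}),B_{2\rho})}{\rho^{(p-n)/2^j}}\biggr)^{1/(p-1)}\frac{d\rho}{\rho},
\]
and since $\rho\le1$ and $p\ge n$ we have $\rho^{(p-n)/2^j}\ge\rho^{p-n}$, so each integral is at most the RHS of the lemma. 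The geometric factor $\sum_j 2^{-j}=2$ gives $\text{LHS}\le 2\cdot\text{RHS}$, with the reverse inequality immediate from monotonicity.
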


\begin{proof} 
From Example~2.22 in \cite{HeKiMa} we infer that $\cpw\{0\} = 0$, and therefore 
\begin{equation*} 
    \cpw(\{0\},B_{2r}) = 0.
\end{equation*}
Using this, 
the monotonicity and countable subadditivity of $\cpw$, and 
the elementary inequality
\begin{equation} \label{ineq}
    \biggl( \sum_{j=0}^{\infty} a_{j} \biggr)^{c}   
    \le \sum_{j=0}^\infty a_{j}^{c}
    \quad \text{for $a_{j} \ge 0$ and $c = \frac{1}{p-1} \le 1$}, 
\end{equation}
we find that
\begin{align*}
    &\int_{0}^{1} \biggl(
    \frac{\cpw(E \cap \clB_{r}, B_{2r})}{r^{p-n}} 
    \biggr)^{1/(p-1)} \,\frac{dr}{r} \nonumber \\ 
    &\qquad \qquad \le 
    \int_{0}^{1} \biggl(
    \sum_{j=0}^{\infty}
    \frac{\cpw(E \cap (\clB_{r^{2^j}} \setm B_{r^{2^{j+1}}}), B_{2r^{2^j}})}{r^{p-n}}
    \biggr)^{1/(p-1)} \,\frac{dr}{r} \\ 
    &\qquad \qquad \le 
    \sum_{j=0}^{\infty}
    \int_{0}^{1} \biggl(
    \frac{\cpw(E \cap (\clB_{r^{2^j}} \setm B_{r^{2^{j+1}}}), B_{2r^{2^j}})}{r^{p-n}}
    \biggr)^{1/(p-1)} \,\frac{dr}{r}.
 \end{align*}
The change of variables $\rho = r^{2^j}$ allows us the rewrite the above series as
\begin{align*}
    &\sum_{j=0}^{\infty} 2^{-j}
    \int_{0}^{1} \biggl(
    \frac{\cpw(E \cap (\clB_{\rho} \setm B_{\rho^2}), B_{2\rho})}{\rho^{(p-n)/2^j}}
    \biggr)^{1/(p-1)} \,\frac{d\rho}{\rho} \\
    &\qquad \qquad \le 
    2\int_{0}^{1} \biggl(
    \frac{\cpw(E \cap (\clB_{\rho} \setm B_{\rho^2}), B_{2\rho})}{\rho^{p-n}}
    \biggr)^{1/(p-1)} \,\frac{d\rho}{\rho},
\end{align*} 
which proves the inequality $\simle$ in the statement of the lemma. 
The reverse inequality follows directly from the monotonicity of $\cpw$.
\end{proof}

The following lemma will make it possible to rewrite the Wiener criterion
for $\cpw$ and $T(\Om)$ in terms of $\cp$ and $\Om$.

\begin{lemma} \label{lem-cap-eq-2}
Let $G \subset \Rn \setminus \{0\}$ be open and $K \Subset G$ be compact. 
Then
\begin{equation*} 
 \cp(K,G) = \cpw(T(K), T(G)).
\end{equation*} 
\end{lemma}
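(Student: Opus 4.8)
The plan is to realise the inversion $T$ as a bijection between the two classes of test functions occurring in Definition~\ref{admis} and to check that this bijection transforms the two energy integrals into each other, the only change being that the constant weight $1$ is replaced by $w$ (and vice versa). Since $T$ restricts to a smooth diffeomorphism of $\Rn\setm\{0\}$ with $T^{-1}=T$, the set $T(G)$ is open, $T(K)$ is compact, and $T(K)\Subset T(G)$ (using that $K$, being a compact subset of $\Rn\setm\{0\}$, has $T(K)$ compact and bounded away from the origin); hence both $\cp(K,G)$ and $\cpw(T(K),T(G))$ are well defined via Definition~\ref{admis}.

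First I would take $u\in C^\infty_0(G)$ with $u|_K\ge1$. As $\supp u$ is a compact subset of $G\subset\Rn\setm\{0\}$, it is bounded and bounded away from $0$, so $T(\supp u)$ is a compact subset of $T(G)$; therefore $\tilde u:=u\circ T$, extended by $0$, lies in $C^\infty_0(T(G))$, and since $T(T(K))=K$ it satisfies $\tilde u|_{T(K)}\ge1$, i.e.\ it is admissible for $\cpw(T(K),T(G))$. By the chain rule $\grad\tilde u(\xi)=dT(\xi)(\grad u)(T(\xi))$, so Lemma~\ref{lem-op-norm} gives $|\grad\tilde u(\xi)|=|(\grad u)(T(\xi))|/|\xi|^2$. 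Substituting $\xi=T(x)$ (so that $d\xi=|J_T(x)|\,dx$) and using Corollary~\ref{cor-det} together with $|T(x)|=1/|x|$,
\[
\int_{T(G)}|\grad\tilde u|^p\,w\,d\xi
=\int_G\frac{|\grad u(x)|^p}{|T(x)|^{2p}}\,|T(x)|^{2(p-n)}\,|J_T(x)|\,dx
=\int_G|\grad u(x)|^p\,dx,
\]
since $|T(x)|^{2(p-n)-2p}|J_T(x)|=|x|^{2n}\,|x|^{-2n}=1$. Taking the infimum over all such $u$ gives $\cpw(T(K),T(G))\le\cp(K,G)$.

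For the opposite inequality I would run exactly the same computation backwards. The map $u\mapsto u\circ T$ is a bijection of $C^\infty_0(G)$ onto $C^\infty_0(T(G))$, with inverse $v\mapsto v\circ T$ (because $T\circ T=\mathrm{id}$ on $\Rn\setm\{0\}$), and it carries the functions admissible for $\cp(K,G)$ onto those admissible for $\cpw(T(K),T(G))$. Performing the substitution $\xi=T(x)$ in $\int_G|\grad(v\circ T)|^p\,dx$ and using that the exponent $2(p-n)$ in $w$ is precisely the one for which $|\xi|^{2(p-n)}$ absorbs the factor $|dT(x)|^p=|x|^{-2p}$ together with the Jacobian $|J_T(x)|$, one obtains $\int_G|\grad(v\circ T)|^p\,dx=\int_{T(G)}|\grad v|^p\,w\,d\xi$ for every $v\in C^\infty_0(T(G))$ with $v|_{T(K)}\ge1$; taking the infimum yields $\cp(K,G)\le\cpw(T(K),T(G))$, and the two inequalities together prove the lemma.

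There is no genuine difficulty in this argument: the only points needing a little attention are that $u\circ T$ really does extend to an element of $C^\infty(\Rn)$ with compact support in $T(G)$ (which is where the facts $0\notin\supp u$ and $\supp u$ bounded are used), and the exact exponent bookkeeping in the change of variables — the cancellation $|T(x)|^{-2n}|J_T(x)|=1$ and the fact that $w(\xi)=|\xi|^{2(p-n)}$ exactly compensates the factor $|dT(x)|^p$. If preferred, one can instead first shrink $G$ to relatively compact open neighbourhoods $G_j\nearrow G$ of $K$, apply Lemma~\ref{lem-spaces} to each $G_j$, and pass to the limit using the continuity of the capacities along increasing open sets; but working directly with $C^\infty_0$ test functions as above is shorter.
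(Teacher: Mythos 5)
Your proposal is correct and follows essentially the same route as the paper's proof: composition with $T$ gives a bijection between the admissible classes $C_0^\infty(G)$ and $C_0^\infty(T(G))$, and the change of variables shows that the weight $|\xi|^{2(p-n)}$ exactly cancels the factors $|dT|^p$ and $|J_T|$ so that the two energy integrals coincide. The extra care you take with supports and the explicit exponent bookkeeping are fine but not needed beyond what the paper records.
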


\begin{proof}
Note that $u\in C_0^\infty(G)$ is admissible for $\cp(K,G)$
if and only if $\ut = u \circ T \in C_0^\infty(T(G))$  
is admissible for $\cpw(T(K), T(G))$.
From Lemmas~\ref{lem-op-norm} and~\ref{lem-spaces} we infer that
\begin{equation*}
    |\grad \ut (\xi)| = |dT(\xi)\grad u(x)| = 
    |\xi|^{-2} |\grad u(x)|,
\end{equation*}
where $x=T(\xi)$. 
Moreover, $|J_T(x)|=|x|^{-2n}=|\xi|^{2n}$ by 
Corollary~\ref{cor-det}, and so
\begin{equation*}
    \int_{T(G)} |\grad \ut |^p w \,d\xi =    
    \int_{T(G)} |\grad u \circ T|^p |\xi|^{-2n} \,d\xi =  
    \int_{G} |\grad u|^p \,dx. 
\end{equation*}
Taking the infimum over all admissible $u$ and $\ut$
concludes the proof.
\end{proof}

We are now ready to prove Lemma~\ref{lem-int-inf-1}.

\begin{proof}[Proof of Lemma~\ref{lem-int-inf-1}]
Lemma~\ref{lem-int-est-1} directly gives that
$\ref{int-inf-1a} \Leftrightarrow \ref{int-inf-1b}$.

To prove the other implications, 
\eqref{eq-caplog}
shows that 
it suffices to compare the integrals over $0<r<\de$
for some small $\de>0$.
For such $r$, we can assume that $r^2 > 2^{-1/r}$.
The monotonicity of the capacity then implies that the integrand in 
\ref{int-inf-1b} is majorized by the integrand in~\ref{int-inf-1e}, 
which is in turn 
majorized by the one in~\ref{int-inf-1d}.
From this we conclude that 
\ref{int-inf-1b}\imp\ref{int-inf-1e}\imp\ref{int-inf-1d}.

The 
integrand in~\ref{int-inf-1d}
can in turn be estimated 
using Corollary~\ref{cor-cap-est}.
Since  
\[
 \int_0^1 \biggl( \frac{f(r)}{r^{p-n}} \biggr)^{1/(p-1)} \,\frac{dr}{r} <\infty
\]
with $f(r)$ as in Corollary~\ref{cor-cap-est},
this yields the implication 
\ref{int-inf-1d}\imp\ref{int-inf-1c}. 
Finally, \ref{int-inf-1c}\imp\ref{int-inf-1a} follows  by the  
monotonicity of the capacity.
\end{proof}

\begin{proof} [Proof of Theorem~\ref{thm-main}]   
As in the proof of Theorem~\ref{thm-Wiener-Br-in-Om-intro}, it
follows from Theorem~9.8 and Proposition~9.9 in~\cite{HeKiMa}
that the regularity of $\binfty$ is a local property (since $\Om \ne \Rn$).
We can therefore assume that 
$0 \notin \Om$ so that $\binfty \notin T(\Om)$.

Recall that by Theorem~\ref{thm-reg-eqv},
the statement \ref{main-thm-0} is true if and only if the origin 
is $\B$-regular with respect to $T(\Om)$.
Moreover, by the Wiener criterion~\eqref{eq-Wiener-alt}, 
the latter is equivalent to 
\begin{equation*} 
    \int_{0}^{1} \biggl(
    \frac{\cpw(T(\Om)^c \cap \clB_r, B_{2r})}
         {r^{p-n}}
    \biggr)^{1/(p-1)} \,\frac{dr}{r} = \infty,
\end{equation*}
since $w(B_r) \simeq r^{2p-n}$.
By \ref{int-inf-1a}$\eqv$\ref{int-inf-1d} in
Lemma~\ref{lem-int-inf-1} this is equivalent to
\begin{equation} \label{main-thm-2}
    \int_{0}^{1} \biggl(
    \frac{\cpw(T(\Om)^c \cap (\clB_r \setm B_{2^{-1/r}}), B_{2r} \setm \clB_{4^{-1/r}})}
         {r^{p-n}}
    \biggr)^{1/(p-1)} \,\frac{dr}{r} = \infty.
\end{equation}
Now, note that 
\begin{equation*}
    \clB_{r} \setm B_{2^{-1/r}} = 
    T(\clB_{2^{1/r}} \setm B_{1/r})
    \quad \text{and} \quad
    B_{2r} \setm \clB_{4^{-1/r}}  = 
    T(B_{4^{1/r}} \setm \clB_{1/2r}).
\end{equation*}
Using Lemma~\ref{lem-cap-eq-2} and the change of variables $\rho = 1/r$,
we can rewrite \eqref{main-thm-2} as 
\begin{equation*} 
    \int_{1}^{\infty} \biggl(  
    \frac{\cp(\Om^c \cap (\clB_{2^\rho} \setm B_{\rho}), B_{4^{\rho}} \setm \clB_{\rho/2})}{\rho^{n-p}}
    \biggr)^{1/(p-1)} \,\frac{d\rho}{\rho} = \infty,
\end{equation*}
which is precisely \ref{main-thm-00}. 
Thus we have shown that $\ref{main-thm-0}$\eqv$\ref{main-thm-00}$.
The equivalence of $\ref{main-thm-0}$ and $\ref{main-thm-000}$
is proved in a similar way using \ref{int-inf-1a}$\eqv$\ref{int-inf-1e}
in Lemma~\ref{lem-int-inf-1}. 

As $B_{2r} \setm \clB_{4^{-1/r}} \subset B_{2r} \setm \{0\} \subset B_{2r}$, the 
integrals \ref{int-inf-1b}--\ref{int-inf-1d} in Lemma~\ref{lem-int-inf-1}
can equivalently
be written with the outer set replaced by $B_{2r} \setm \{0\}$.
It thus follows, after inversion and 
a  use of 
 Lemma~\ref{lem-cap-eq-2},
that the outer set $B_{4^r} \setm \clB_{r/2}$
in~\ref{main-thm-00} and~\ref{main-thm-000} can be replaced by 
$\Rn \setm \clB_{r/2}$.
\end{proof}

\section{Counterexamples for \texorpdfstring{$p=n$}{p=n}}
\label{sec-ex}

\emph{Recall the standing assumptions from the beginning of
Section~\ref{circ-inv}.}

\medskip

If $\bdy \Om$ is bounded, then it follows
directly from Theorem~\ref{thm-Wiener-Br-in-Om-intro} 
(and \eqref{eq-caplog-p>=n})
or from Theorem~\ref{thm-main} 
that $\binfty$ is irregular.
The converse is true if $p>n$, by Theorem~\ref{thm-reg-p>n},
but not if $p=n$. 
An easy example showing this is to let 
$\Om=\Rn \setm F$,
where $F$ is an unbounded closed
set  with $\cn F=0$.
The following example is perhaps a bit more illuminating.

\begin{example}
\label{ex-not-simple-p=n}
Consider 
an open set of the type
\[
    \Om=\R^n \setm \bigcup_{j=1}^\infty \itoverline{B(x_j,r_j)},
\quad \text{where } 
|x_j|=\tfrac34 2^{4^{j}} \text{ and } r_j=2^{-8^{j}},\ j=1,2,\dots.
\]
Clearly $\bdy\Om$ is unbounded, but we shall see that $\binfty$
is irregular when $p=n$.
In view of Theorem~\ref{thm-main}\ref{main-thm-000} it suffices to
show that the Wiener integral therein converges.
To this end, we have for
$2^{4^{j-1}} < r <2^{4^{j}}$ 
and $j=1,2,\dots,$ that
\[
    \Omc \cap (\clB_{r^2} \setm B_r) 
\subset \itoverline{B(x_j,r_j)}
\quad \text{and} \quad B_{4^r}\setm \clB_{r/2} \supset B(x_j, 1).
\]
Hence
\begin{align*}
    \cn(\Om^c \cap (\clB_{r^2} \setm B_{r}), B_{4^r} \setm \clB_{r/2})
    & \le \cn(\itoverline{B(x_j,r_j)}, B(x_j,1)) \\
    &\simeq \biggl( \log \frac{1}{r_j}\biggr)^{1-n}
    = \bigl( \log 2^{8^j} \bigr)^{1-n}.
\end{align*}
Moreover,
$\Om^c \cap (\clB_{r^2}\setm B_r)  = \emptyset$ for $r < 2$.
Therefore,
\begin{align*}
&\int_{1}^{\infty} 
\cn(\Om^c \cap (\clB_{r^2} \setm B_{r}), B_{4^r} \setm \clB_{r/2})^{1/(n-1)}
\,\frac{dr}{r}  \\
&\qquad\qquad \simle 
\sum_{j=1}^\infty \int_{2^{4^{j-1}}}^{2^{4^{j}}}
\frac{1}{\log 2^{8^j}}
 \,\frac{dr}{r}  
=
 \sum_{j=1}^\infty \frac{\tfrac34 4^j}{8^j}
    = \frac{3}{4} \sum_{j=1}^\infty \frac{1}{2^{j}}
    = \frac{3}{4}.
\end{align*}
Thus $\binfty$ is irregular by 
Theorem~\ref{thm-main}\ref{main-thm-000}.
\end{example}

In the rest of this section we show that, if the exponential functions $2^r$ and $4^r$ 
in Theorem~\ref{thm-main}\ref{main-thm-00} are replaced by linear functions $Mr$ and $Nr$, respectively, 
where $N > M > 1$,  
then the resulting condition does \emph{not} characterize $\A$-regularity at $\binfty$ when $p=n$. 
(This would be possible for $p>n$, but is
not interesting in view of the simple characterization in Theorem~\ref{thm-reg-p>n}.)

More precisely, consider the statement
\begin{equation} \label{wrong-cond1}
    \int_{1}^{\infty} 
    \cn(\Om^c \cap (\clB_{Mr} \setm B_r), B_{Nr} \setm \clB_{r/2})^{1/(n-1)}
    \frac{dr}{r} = \infty.
\end{equation}
Using Lemma~\ref{lem-cap-eq-2} and the change of variables $\rho = 1/r$ we can rewrite this as
\begin{equation*}
    \int_{0}^{1} 
    \cn(T(\Om)^c \cap (\clB_{\rho} \setm B_{\rho/M}), B_{2\rho} \setm \clB_{\rho/N})^{1/(n-1)}
    \frac{d\rho}{\rho} = \infty.
\end{equation*} 
From Lemma~\ref{lem-cap-est-3}  below and
an argument similar to the one in the proof of Lemma~\ref{lem-int-est-1},
using the  monotonicity and countable subadditivity of $\cpw$ and 
the elementary inequality~\eqref{ineq},
we infer that this is equivalent to 
\begin{equation} \label{wrong-cond2}
    \int_{0}^{1} 
    \cn(T(\Om)^c \cap (\clB_{\rho} \setm B_{\rho/2}), B_{2\rho})^{1/(n-1)}
    \frac{d\rho}{\rho} = \infty.
\end{equation}
The following example  shows that \eqref{wrong-cond2} does \emph{not} characterize 
$\B$-regularity at the origin when $p=n$. (In this example we have $T(\Om)^c = F$, where $F$ is defined in \eqref{set-F}.)
Thus, from Theorem~\ref{thm-reg-eqv}, it follows that 
\eqref{wrong-cond1} does \emph{not} characterize $\A$-regularity at $\binfty$ when $p=n$.

\begin{example} \label{ex-p=n}
Consider $\Rn$ with the Lebesgue measure and $p=n$.
For $j=1,2,\ldots$\,, let 
\begin{equation*}
    \al_{j} = e^{-2^{j}}
    \quad \text{and} \quad 
    x_{j} = (\al_{j},0,\dots,0).
\end{equation*}
In addition, let
\begin{equation} \label{set-F}
    F = \{0\} \cup \bigcup_{j=1}^{\infty} \itoverline{B(x_{j},\al_{j+1})}.
\end{equation}

We shall show that
\begin{equation} \label{eq-Wiener-n-half}
    I_{1} = 
    \int_{0}^{1} \cn(F \cap (\clB_r \setm B_{r/2}), B_{2r})^{1/(n-1)} \,\frac{dr}{r}  < \infty,
\end{equation}
even though the origin~$0$ is $\B$-regular with respect to $\Rn \setm F$, i.e.\ 
\begin{equation*}
    I_{2} = 
    \int_{0}^{1} \cn(F \cap \clB_r,B_{2r})^{1/(n-1)} \,\frac{dr}{r} = \infty,
\end{equation*}
by the Wiener criterion (Theorem~\ref{thm-Wiener}).
Some preliminary estimates will be useful. 
Note that $\al_{j+1} = \al_{j}^{2}$ and $\al_{j} \le e^{-2} < \tfrac{1}{4}$,
from which it follows readily that
\begin{equation}   \label{eq-subset-r/2}
    \itoverline{B(x_j,\al_{j+1})} \subset B_{r/2}
    \quad \text{whenever} \quad
    \tfrac{1}{2} r >
    \tfrac{5}{4} \al_{j} >
    \al_{j} + \al_{j+1},
\end{equation}
and 
\begin{equation*}
    \itoverline{B(x_j,\al_{j+1})} \cap \clB_{r} = \emptyset
    \quad \text{whenever} \quad
    r < \tfrac{3}{4} \al_{j} < 
    \al_{j} - \al_{j+1}.
\end{equation*}
Thus if $r > \tfrac{5}{2}\al_{1}$ or
$\tfrac{5}{2}\al_{j} < r < \frac{3}{4}\al_{j-1}$, $j \ge 2$, then 
\begin{equation*}
    F \cap (\clB_{r} \setm B_{r/2}) = \emptyset.
\end{equation*}
On the other hand, if 
$\tfrac{3}{4} \al_{j} \le r \le \tfrac{5}{2}\al_{j}$, then
\begin{equation*}
    F \cap (\clB_{r} \setm B_{r/2}) \subset
    \itoverline{B(x_{j},\alp_{j+1})}
    \quad \text{and} \quad
    B_{2r} \supset B(x_{j},\tfrac{1}{2} \al_{j}).
\end{equation*}
From these inclusions, together with the monotonicity of $\cn$ and 
\eqref{eq-caplog-n}, 
we infer that
\begin{align*}
    \cn(F \cap (\clB_{r} \setm B_{r/2}),B_{2r})^{1/(n-1)}
    &\le 
    \cn(\itoverline{B(x_{j},\al_{j+1})},B(x_{j},\tfrac{1}{2} \al_{j}))^{1/(n-1)} \\
    &\simeq 
    \biggl( \log \frac{\alp_{j}}{2\al_{j+1}}\biggr)^{-1}
    \simeq 2^{-j},
\end{align*}
where the comparison constants depend only on $n$. Therefore,
\begin{equation*}
    I_{1}  =  
    \sum_{j=1}^{\infty}     
    \int_{3\al_{j}/4}^{5\al_{j}/2}            
    \cn(F \cap (\clB_{r} \setm B_{r/2}),B_{2r})^{1/(n-1)} 
    \,\frac{dr}{r} 
    \simle 
    \sum_{j=1}^{\infty} 2^{-j} < \infty.
\end{equation*}

Next we turn to $I_{2}$. 
If $2\al_{j} \le r \le 2\al_{j-1}$, $j \ge 2$, then again by monotonicity,
\eqref{eq-subset-r/2} and 
\eqref{eq-caplog-n}, 
\begin{align*}
    \cn(F \cap \clB_{r},B_{2r})^{1/(n-1)}
    &\ge \cn(\itoverline{B(x_{j},\al_{j+1})},
    B(x_{j},5\al_{j-1}))^{1/(n-1)} \\
    &\simeq 
    \biggl(\log \frac{5\alp_{j-1}}{\alp_{j+1}}\biggr)^{-1}
    \ge 2^{-j-1},
\end{align*}
and so
\begin{equation*}
    I_{2} \ge 
    \sum_{j=2}^{\infty}     
    \int_{2\al_{j}}^{2\al_{j-1}} \cn(F \cap \clB_{r},B_{2r})^{1/(n-1)} \,\frac{dr}{r} 
    \simge 
    \sum_{j=2}^{\infty} 2^{-j-1}2^{j} = \infty.     
\end{equation*}
In conclusion, when $p=n$, 
the origin is $\B$-regular with respect to $T(\Om) = \R^n\setm F$, 
but the Wiener integral in \eqref{eq-Wiener-n-half} is convergent. 
That is, the condition \eqref{wrong-cond2} does not characterize $\B$-regularity at the origin when $p=n$. 
\end{example}

It remains to state and prove Lemma~\ref{lem-cap-est-3}.

\begin{lemma} \label{lem-cap-est-3}
Let $E \subset \Rn$ be closed, 
$r>0$ and $0<a<b \le 1$. 
Then
\begin{align*}
    \cpw(E \cap (\clB_r \setm B_{br}), B_{2r}) 
    &\le
    \cpw(E \cap (\clB_r \setm B_{br}), B_{2r} \setm \clB_{ar}) \\
    &\le
    c\cpw(E \cap (\clB_r \setm B_{br}), B_{2r}),
\end{align*}
where 
$c = 2^p + 4^p c_{p,w}/(b-a)^p$ 
and $c_{p,w} > 0$ is the constant from the Poincar\'e inequality \eqref{poin} below. 
\end{lemma}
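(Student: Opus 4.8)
The plan is to treat the two inequalities separately. The left-hand one is immediate: since $B_{2r}\setm\clB_{ar}\subset B_{2r}$ and $\cpw$ is decreasing in its second argument (Theorem~2.2 in~\cite{HeKiMa}), we get $\cpw(E\cap(\clB_r\setm B_{br}),B_{2r})\le\cpw(E\cap(\clB_r\setm B_{br}),B_{2r}\setm\clB_{ar})$.

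For the right-hand inequality, write $K=E\cap(\clB_r\setm B_{br})$; since $E$ is closed and $0<a<b\le1$, the set $K$ is compact and contained in the open set $B_{2r}\setm\clB_{ar}$. Given $\eps>0$, I would choose (using Definition~\ref{admis}) a function $u\in C_0^\infty(B_{2r})$ with $u\ge1$ on $K$ and $\int_{B_{2r}}|\grad u|^p w\,d\xi<\cpw(K,B_{2r})+\eps$, and then cut it off near $\clB_{ar}$ exactly as in Lemma~\ref{lem-cap-est-1}, using the radial Lipschitz function
\[
\eta(\xi)=\min\Bigl\{1,\max\Bigl\{0,\frac{|\xi|-ar}{(b-a)r}\Bigr\}\Bigr\},
\]
which vanishes on $\clB_{ar}$, equals $1$ on $\Rn\setm B_{br}$ (hence on $K$), satisfies $0\le\eta\le1$, and has $|\grad\eta|\le1/((b-a)r)$ with $\grad\eta$ supported in $\clB_{br}\setm B_{ar}$. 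Then $v:=\eta u$ is Lipschitz with compact support, continuous, equals $u\ge1$ on $K$, and vanishes outside $B_{2r}\setm\clB_{ar}$; by Theorem~4.5 in~\cite{HeKiMa} it lies in $\Hp_0(B_{2r}\setm\clB_{ar},w)$, so it is admissible for $\cpw(K,B_{2r}\setm\clB_{ar})$ as in~\eqref{eq-Wp}.

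Finally I would estimate the energy of $v$. From $\grad v=\eta\grad u+u\grad\eta$, $0\le\eta\le1$ and $(s+t)^p\le2^p(s^p+t^p)$, together with the fact that $\grad\eta$ is supported in $B_{br}\subset B_{2r}$ where $|\grad\eta|^p\le((b-a)r)^{-p}$, one obtains
\[
\int_{\Rn}|\grad v|^p w\,d\xi
\le 2^p\int_{B_{2r}}|\grad u|^p w\,d\xi
+\frac{2^p}{((b-a)r)^p}\int_{B_{2r}}|u|^p w\,d\xi .
\]
Applying the Poincar\'e inequality~\eqref{poin} to $u\in\Hp_0(B_{2r},w)$, i.e.\ $\int_{B_{2r}}|u|^p w\,d\xi\le c_{p,w}(2r)^p\int_{B_{2r}}|\grad u|^p w\,d\xi$, turns this into $\int_{\Rn}|\grad v|^p w\,d\xi\le c\int_{B_{2r}}|\grad u|^p w\,d\xi$ with $c=2^p+4^p c_{p,w}/(b-a)^p$. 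Hence $\cpw(K,B_{2r}\setm\clB_{ar})\le c(\cpw(K,B_{2r})+\eps)$, and letting $\eps\to0$ gives the claim. The only delicate point is the admissibility of $v$ for the ``punctured'' capacity, namely that $v\in\Hp_0(B_{2r}\setm\clB_{ar},w)$; this is handled exactly as the analogous step in the proof of Lemma~\ref{lem-cap-est-1}, and matching the precise constant $c$ then amounts only to bookkeeping with the $\Hp_0$-Poincar\'e inequality~\eqref{poin} on the ball $B_{2r}$.
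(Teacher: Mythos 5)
Your proof is correct and follows essentially the same route as the paper's: monotonicity in the second argument for the first inequality, and for the second a cutoff supported away from $\clB_{ar}$ multiplied against an admissible function for $\cpw(K,B_{2r})$, with the product rule, $0\le\eta\le1$ and the Poincar\'e inequality \eqref{poin} on $B_{2r}$ yielding the stated constant. The only cosmetic difference is that you use an explicit radial Lipschitz cutoff (justifying admissibility via Theorem~4.5 in~\cite{HeKiMa}, exactly as in Lemma~\ref{lem-cap-est-1}) where the paper takes a smooth one.
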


To prove Lemma~\ref{lem-cap-est-3} we need the 
following Poincar\'e inequality \cite[p.~9]{HeKiMa}:
There is a constant $c_{p,w} > 0$ 
such that for every 
ball $B=B(x,r) \subset \Rn$
and function $u\in C_0^\infty(B)$
we have 
\begin{equation} \label{poin}
    \int_{B} |u|^p w \,d\xi 
    \le c_{p,w}     r^p \int_{B} |\grad u|^p w \,d\xi.
\end{equation}

\begin{proof}[Proof of Lemma~\ref{lem-cap-est-3}]
The first inequality follows directly from the monotonicity of $\cpw$. 
For
the second inequality, let
$\varphi \in C_0^\infty(B_{2r} \setm \clB_{ar})$ be such that 
\begin{equation*}
    \varphi \equiv 1 \text{ on } \clB_{r} \setm B_{br},
    \quad 0 \le \varphi \le 1
    \quad \text{and} \quad
    |\grad\varphi| \le \frac{2}{(b-a)r}. 
\end{equation*}
For every $u \in C_{0}^{\infty}(B_{2r})$ satisfying $u \ge 1$ on $E \cap (\clB_{r} \setm B_{br})$, 
we have 
\begin{align*}
    \cpw(E \cap (\clB_{r} \setm B_{br}), B_{2r} \setminus \clB_{ar}) 
    & \le 
    \int_{B_{2r} \setm \clB_{ar}} |\grad(\varphi u)|^p w \,d\xi \\
    & \le 
    2^p\int_{B_{2r}} \varphi^p |\grad u|^p w \,d\xi +
    2^p\int_{B_{2r}} |\grad\phi|^p|u|^p w \,d\xi \\
    &  \le 
    2^p\int_{B_{2r}} |\grad u|^p w \,d\xi +
    \frac{4^p}{(b-a)^pr^p}
    \int_{B_{2r}} |u|^p w \,d\xi \\
    & \le 
    c\int_{B_{2r}} |\grad u|^p w \,d\xi,
\end{align*}
where the final step follows from the Poincar\'e inequality~\eqref{poin}. 
Since $u$ was arbitrary,
this proves the second inequality in the statement of the lemma. 
\end{proof}

\section{A generalization to Ahlfors regular metric spaces} 
\label{sec-Ahlfors}

In this section we will show that
Theorem~\ref{thm-reg-p>n} generalizes to \p-harmonic functions in
Ahlfors $Q$-regular metric spaces, with $p>Q$.
Since metric spaces are
not the main topic in this paper we will try to be brief.
We refer to the monograph 
Bj\"orn--Bj\"orn~\cite{BBbook} for the theory
of \p-harmonic functions in metric spaces
and original references to the literature.
In this setting, we only have the sufficiency part of the Wiener criterion
(proved in Bj\"orn--MacManus--Shan\-mu\-ga\-lin\-gam~\cite[Theorem~5.1]{BMS} 
under an additional geometric condition that
was later removed by Bj\"orn~\cite[Lemma~3.9]{JB-pfine}),
but that will be enough here.

Assume  that $X=(X,d)$ is a complete metric space equipped
with a positive complete  Borel  measure $\mu$ 
which is \emph{Ahlfors $Q$-regular}, with some $Q \ge 1$, 
i.e.
\begin{equation*}
     \mu(B(x,r)) \simeq r^Q.
\end{equation*}
We also assume that $1<p< \infty$ and that $\Om\subset X$ is open.

Following Hei\-no\-nen--Koskela~\cite{HeKo98} and Koskela--MacManus~\cite{KoMc}
we say that a measurable
function $g:\Om \to [0,\infty]$ is a \p-weak \emph{upper gradient}
of $u: \Om  \to \R$ 
if for \p-almost all nonconstant rectifiable  curves
$\gamma: [0,1] \to \Om$,
\begin{equation} \label{ug-cond}
    |u(\gamma(0)) - u(\gamma(1))| \le \int_{\gamma} g\,ds,
\end{equation}
where $ds$ denotes arc length and 
\emph{\p-almost all} means
that there is $\rho\in L^p(\Om)$ such that
$\int_\ga \rho\,ds=\infty$ for every 
nonconstant rectifiable curve $\ga$ for
which \eqref{ug-cond} fails.
Here, a \emph{curve} is a continuous mapping from an interval,
and a \emph{rectifiable} curve is a curve with finite length.

If $u$ has a \p-weak upper gradient in $\Lploc(\Om)$, then
it has a \emph{minimal \p-weak upper gradient}
$g_u \in \Lploc(\Om)$ in the sense that $g_u \le g$ a.e.\
for every \p-weak upper gradient $g \in \Lploc(\Om)$ of $u$.

A continuous function 
$u$ with $g_u\in \Lploc(\Om)$ is
\emph{\p-harmonic} in $\Om$
if 
\begin{equation} \label{eq-def-min}
    \int_{\phi \ne 0} g^p_u \, d\mu
    \le \int_{\phi \ne 0} g_{u+\phi}^p \, d\mu
    \quad \text{for all } \phi \in \Lipc(\Om),
\end{equation}
where $\Lipc(\Om)$ denotes the space of Lipschitz functions
with compact support in $\Om$.

Superharmonic functions and Perron solutions are now defined as in 
Definitions~\ref{A-sup} and~\ref{deff-Perron}. 
(See Chapters~9 and~14 in~\cite{BBbook} for why this definition of
superharmonicity 
is equivalent to other definitions used in metric spaces,
under our assumptions.)
In metric spaces, 
these notions  were introduced by 
Shan\-mu\-ga\-lin\-gam~\cite{Sh-harm},
Kinnunen--Shan\-mu\-ga\-lin\-gam~\cite{KiSh01},
Kinnunen--Martio~\cite{KiMa02}
and Bj\"orn--Bj\"orn--Shan\-mu\-ga\-lin\-gam~\cite{BBS2}.

The measure
 $\mu$ supports a \emph{\p-Poincar\'e inequality} if
there exist constants $C>0$ and $\lambda \ge 1$
such that for all balls $B \subset X$,
all integrable functions $u$ on $X$ and all 
upper gradients $g$ of $u$,
\begin{equation*}
    \frac{1}{\mu(B)}\int_{B} |u-u_B| \,d\mu
    \le C  (\diam B) \biggl( \frac{1}{\mu(\la B)}
    \int_{\lambda B} g^{p} \,d\mu \biggr)^{1/p},
\end{equation*}  
where $u_B :=\int_B u\, d\mu/\mu(B)$ and $\la B(x,r)=B(x,\la r)$.

In the rest of this section, we fix a base point $b \in X$.
To simplify the notation we write 
$|x|:=d(x,b)$.
The space $X$ is \emph{annularly connected for large radii around $b$}
if there are constants $A,R_A\ge1$ such that 
any two points $x$ and $y$ with $|x|=|y|\ge R_A$
can be connected by a (not necessarily rectifiable) curve 
\begin{equation*}   
    \ga \subset B(b,A|x|) \setm \itoverline{B(b,|x|/A)}.
\end{equation*}

Instead of the inversion $T$ 
used earlier, we will sphericalize $X$ into a bounded space. 
This is done by equipping it with a 
new metric and a new measure preserving  \p-harmonic functions,
see Bj\"orn--Bj\"orn--Li~\cite{BBLi}, \cite{BBLi2} for further details and 
original references.
Let $\Xdot=X\cup\{\binfty\}$.
Define $d_b,\dha:\Xdot \times \Xdot \to [0,\infty)$ by
\begin{equation*} 
    d_b(x,y) = d_b(y,x) =
    \begin{cases}
        \dfrac {d(x,y)}{(1+|x|)(1+|y|)},&\text{if }x,y\in X,\\[3mm]
        \dfrac{1}{1+|x|},&\text{if }x\in X \text{ and } y=\binfty,\\[3mm]
        0,&\text{if }x=y=\binfty,
    \end{cases}
\end{equation*}
and
\begin{equation*} 
   \dha(x,y)=\inf_{x=x_0,x_1,\dots,x_k=y}  \sum_{j=1}^k d_b(x_{j-1},x_{j}),
\end{equation*}
where the infimum is taken over all finite sequences 
$x=x_0,x_1,\dots,x_k=y$.
This makes $\dha$ into a metric on $\Xdot$, and $(\Xdot,\dha)$ 
is the \emph{sphericalization} of $(X,d)$.
Then $\diam_{\dhat} \Xhat=1$.
Note  that $\dhat(x,\binfty)=d_b(x,\binfty)$, see \cite[Section~3]{BBLi2}.
To simplify the notation let $\dhat(x)=\dhat(x,\binfty)$.
We equip $\Xhat$ with the measure $\muhat$ given by
\begin{equation*} 
    d\muha(x) = \frac{d\mu(x)}{(1+|x|)^{2p}}=\dhat(x)^{2p}\, d\mu(x)
    \quad \text{and} \quad
    \muha(\{\binfty\})=0.
\end{equation*}

It follows from the results in~\cite{BBLi2}
that 
under the assumptions in Theorem~\ref{thm-reg-Q}, $\muhat$ is a doubling
measure (i.e.\ $0 < \muhat(2B) \simle  \muhat(B) < \infty$ for all balls in $\Xhat$)
and supports a \p-Poincar\'e inequality.
Moreover, a function $u:\Om \to \R$ is \p-harmonic (resp.\ superharmonic)
with respect to $(d,\mu)$ if and only if it is 
\p-harmonic (resp.\ superharmonic) with respect to
$(\dhat,\muhat)$.
Boundary regularity for \p-harmonic functions is 
defined just as before.
Since the Perron solutions are the same with respect to $(d,\mu)$
and $(\dhat,\muhat)$, boundary regularity is also the same.

\begin{theorem} \label{thm-reg-Q}
Let $X$ be an unbounded complete Ahlfors $Q$-regular metric space
and let $\Om \subsetneq X$ be an unbounded open set and $p>Q$.
Assume that either
\begin{enumerate}
\item \label{X-Q}
$X$ supports a $Q$-Poincar\'e inequality and $Q>1$\textup; or
\item \label{X-p}
$X$ supports a p-Poincar\'e inequality and is
annularly connected for large radii around some $b\in X$.
\end{enumerate}

Then $\binfty$ is regular with respect to $\Om$
if and only if $\bdy\Om$ is unbounded.
\end{theorem}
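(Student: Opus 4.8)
The plan is to run the inversion argument of Theorem~\ref{thm-reg-p>n} in the metric setting, using the sphericalization $\Xhat=X\cup\{\binfty\}$ recalled above in place of the circular inversion. By the results of Bj\"orn--Bj\"orn--Li~\cite{BBLi2} quoted above, under either~\ref{X-Q} or~\ref{X-p} the measure $\muhat$ is doubling and supports a \p-Poincar\'e inequality on $\Xhat$, and $\binfty$ is regular for $\Om$ with respect to $(d,\mu)$ if and only if it is regular for the bounded open set $\Om\subset\Xhat$ with respect to $(\dhat,\muhat)$ --- now as an ordinary finite boundary point. I shall also use that in both cases $X$ is \emph{connected at infinity}, i.e.\ $X\setm B(b,R)$ is connected for all large $R$: under~\ref{X-p} this follows from annular connectedness, and under~\ref{X-Q} from the fact that a complete Ahlfors $Q$-regular space supporting a $Q$-Poincar\'e inequality with $Q>1$ is linearly locally connected, see Heinonen--Koskela~\cite{HeKo98}.

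First I would record the geometry of $(\Xhat,\dhat,\muhat)$ near $\binfty$. Since $\dhat(x,\binfty)=1/(1+|x|)$ and $d\muhat=(1+|x|)^{-2p}\,d\mu$, decomposing $\{|x|>1/t\}$ into dyadic annuli and using Ahlfors $Q$-regularity of $\mu$ gives $\muhat(\Bhat(\binfty,t))\simeq t^{2p-Q}$ for small $t>0$ (the series $\sum 2^{k(Q-2p)}$ converges as $Q<2p$). For a point $x_j\in X$ with $t_j:=\dhat(x_j,\binfty)=1/(1+|x_j|)$, the ball $\Bhat(x_j,ct_j)$ is comparable to the $d$-ball $B(x_j,c|x_j|)$, on which $\muhat\simeq|x_j|^{-2p}\mu$, so $\muhat(\Bhat(x_j,t_j))\simeq|x_j|^{Q-2p}\simeq t_j^{2p-Q}$ as well. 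Since $\muhat$ is doubling and supports a \p-Poincar\'e inequality, the standard estimate for the variational capacity of concentric balls yields $\cpXhat(\Bhat(z,t),\Bhat(z,2t))\simeq\muhat(\Bhat(z,t))/t^p$; in particular $\cpXhat(\Bhat(\binfty,t),\Bhat(\binfty,2t))\simeq t^{p-Q}$. Finally, since $p>Q$ and $\muhat$ is a rescaled Ahlfors $Q$-regular measure on $\Bhat(x_j,t_j)$, a point is capacity-thick there: $\cpXhat(\{x_j\},\Bhat(x_j,t_j))\simeq\muhat(\Bhat(x_j,t_j))/t_j^p\simeq t_j^{p-Q}$, whereas at $\binfty$ the local dimension $2p-Q$ exceeds $p$, so $\binfty$ has zero \p-capacity in $\Xhat$.

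If $\bdy\Om$ is bounded in $X$, then for large $R$ the set $X\setm B(b,R)$ is nonempty (as $\Om$ is unbounded), is open and closed in the connected set $X\setm B(b,R)$, and meets $\Om$, hence $X\setm B(b,R)\subset\Om$; thus $\Omc$ is bounded and $\bdyhat\Om=\bdy\Om\cup\{\binfty\}$ with $\binfty$ at positive $\dhat$-distance from $\bdy\Om$, i.e.\ isolated in $\bdyhat\Om$. Since $\binfty$ has zero \p-capacity, the boundary data at $\binfty$ is negligible for Perron solutions~\cite{BBbook}: taking $f\in C(\bdyhat\Om)$ with $f\equiv0$ on $\bdy\Om$ and $f(\binfty)=1$, the solution $\uP f$ coincides with $\uP\ft$ where $\ft\equiv0$ on $\bdyhat\Om$, and $\uP\ft\equiv0$. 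Hence $\lim_{x\to\binfty}\uP f(x)=0\ne f(\binfty)$, so $\binfty$ is irregular. (This step replaces the necessity half of the Wiener criterion, which is not available in metric spaces.)

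Conversely, if $\bdy\Om$ is unbounded, choose $x_j\in\bdy\Om$ with, after passing to a subsequence, $1+|x_{j+1}|\ge2(1+|x_j|)$, and set $t_j=1/(1+|x_j|)$, so $t_j\downarrow0$ and the intervals $(t_j,2t_j)$ are pairwise disjoint. For $t\in(t_j,2t_j)$ we have $x_j\in(\Xhat\setm\Om)\cap\Bhat(\binfty,t)$ and $\Bhat(\binfty,2t)\subset\Bhat(x_j,5t_j)$, so by monotonicity of $\cpXhat$ in both arguments and the estimates above,
\[
\cpXhat\bigl((\Xhat\setm\Om)\cap\Bhat(\binfty,t),\Bhat(\binfty,2t)\bigr)\ge\cpXhat\bigl(\{x_j\},\Bhat(x_j,5t_j)\bigr)\simge t_j^{p-Q}\simeq\cpXhat\bigl(\Bhat(\binfty,t),\Bhat(\binfty,2t)\bigr).
\]
Hence the Wiener integral at $\binfty$ in $\Xhat$ diverges,
\[
\int_0^{\delta}\biggl(\frac{\cpXhat((\Xhat\setm\Om)\cap\Bhat(\binfty,t),\Bhat(\binfty,2t))}{\cpXhat(\Bhat(\binfty,t),\Bhat(\binfty,2t))}\biggr)^{1/(p-1)}\frac{dt}{t}\simge\sum_j\int_{t_j}^{2t_j}\frac{dt}{t}=\sum_j\log2=\infty,
\]
and the sufficiency part of the Wiener criterion (Bj\"orn--MacManus--Shan\-mu\-ga\-lin\-gam~\cite{BMS} and Bj\"orn~\cite{JB-pfine}) shows that $\binfty$ is regular. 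The main obstacle is the two-sided control of $\muhat$ and of $\cpXhat$ on the $\dhat$-annuli around $\binfty$ and around the points $x_j$ --- identifying these measures with suitably rescaled $Q$-regular ones, which is where Ahlfors regularity of $\mu$ and the hypothesis $p>Q$ enter --- together with the elementary but essential observation that $\bdy\Om$ bounded forces $\Omc$ bounded.
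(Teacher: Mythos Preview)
Your argument is essentially the paper's: sphericalize, verify the Wiener integral at $\binfty$ diverges when $\bdy\Om$ is unbounded by producing a sequence $x_j\to\binfty$ and using that points of $X$ have positive \p-capacity when $p>Q$, and show irregularity directly when $\bdy\Om$ is bounded. The sufficiency half matches the paper almost line for line; the capacity estimates you sketch (via the local $Q$-regularity of $\muhat$ away from $\binfty$ and the dimension $2p-Q>p$ at $\binfty$) are exactly what the paper obtains by citing~\cite{BBLehring} and~\cite[Proposition~6.16]{BBbook}.

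Two remarks on the irregularity half. First, the detour through ``$X$ is connected at infinity'' and ``$\Omc$ is bounded'' is unnecessary: once $\bdy\Om$ is bounded in $(X,d)$, the point $\binfty$ is automatically isolated in $\bdyhat\Om=\bdy\Om\cup\{\binfty\}$, since $\dhat(x,\binfty)=(1+|x|)^{-1}$ is bounded below on a bounded set. (Your derivation of connectedness at infinity from~\ref{X-p} alone is also not immediate, and the reference for~\ref{X-Q} should be Korte~\cite{korte07} rather than~\cite{HeKo98}.) Second, where you invoke ``boundary data on zero-capacity sets is negligible~\cite{BBbook}'', the paper instead uses that $X$ is \p-parabolic (since $p>Q$) and applies Hansevi~\cite[Theorem~7.8]{Hansevi2} directly in the unbounded space to get $P\chi_{\{\binfty\}}=P0\equiv0$. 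Your route via $\Cp^{\Xhat}(\{\binfty\})=0$ also works, but you should point to a precise invariance statement (e.g.\ the removability/invariance results in~\cite{BBS2} or the relevant theorem in~\cite{BBbook}) rather than a bare citation, and note that it applies because $\Cp^{\Xhat}(\Xhat\setm\Om)>0$ (there is a finite point in $X\setm\Om$, which has positive capacity as $p>Q$).
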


Instead of \p-harmonic functions as 
in~\eqref{eq-def-min},
one can similarly consider Cheeger \p-harmonic functions, see 
Appendix~B.2 in~\cite{BBbook}.
The proof of ``Theorem~\ref{thm-reg-Q}'' remains the same in this case.

\begin{proof}[Proof of Theorem~\ref{thm-reg-Q}]
In case~\ref{X-Q} it follows from Theorem~3.3 in Korte~\cite{korte07}
that $X$ is annularly connected, and thus \ref{X-p} holds.

Since $p>Q$, the space $X$ is \p-parabolic, 
according to Definition~5.4 and 
Theorem~5.5 in Bj\"orn--Bj\"orn--Lehrb\"ack~\cite{BBLintgreen}.
Moreover, by Proposition~5.3 in~\cite{BBLintgreen}, 
$\Cp(\{x\})>0$ for every $x \in X$, where
$\Cp$ is the Sobolev capacity on $X$, see e.g.~\cite{BBbook}. 
In particular, since $\Om \ne X$, we see that $\Cp(\bdy \Om)>0$.

If $\bdy\Om$ is bounded,
then $f=\chi_{\{\binfty\}} \in C(\bdystar\Om)$ and 
the Perron solution $Pf=P 0  \equiv 0$, by Theorem~7.8 in Hansevi~\cite{Hansevi2}
since $X$ is \p-parabolic.
Hence $\binfty$ is irregular.

Conversely, assume that $\bdy\Om$ is unbounded.
Then we can find a sequence $\xi_{j} \in \bdy \Om $ of points 
such that $\dhat(\xi_{1}) < \frac{1}{24}$ 
and
$\dhat(\xi_{j+1}) < \frac{1}{2}\dhat(\xi_{j})$, $j=1,2,\dots$\,.
We shall denote balls in $(\Xhat,\dha)$ by
\begin{equation*} 
    \Bhat(\xi,r):=\{y\in \Xhat: \dha(x,y)<r\}
\end{equation*} 
and will use the condenser capacity, 
which for compact $K \subset \Bhat(\xi,r)$ 
can be  defined by
\begin{equation*} 
    \cpXhat(K,\Bhat(\xi,r)) := \inf_u\int_{\Bhat(\xi,r)} \gh_u^p\, d\muhat,
\end{equation*}
where $\gh_u$ is the minimal \p-weak upper gradient of $u$ with
respect to $(\Xhat,\dhat,\muhat)$ and
the infimum is taken over all $u\in \Lip_c(\Bhat(\xi,r)))$
with $u\ge 1$ on $K$. 

Fix $j$ and consider $\dhat(\xi_{j}) < r < 2\dhat(\xi_{j})$.
Then, by the monotonicity of the capacity,
\begin{equation*}
    \cpXhat \Bigl( \itoverline{\Bhat(\binfty,r)}\setm \Om, \Bhat(\binfty,2r) \Bigr) 
     \ge \cpXhat(\{\xi_j\}, \Bhat(\xi_{j},3r)). 
\end{equation*}
Next, as $\cpXhat$ is a so-called outer capacity 
(by~\cite[Theorem~6.19(vii)]{BBbook})
and $p>Q$, 
Theorem~1.1(b) in~\cite{BBLehring} 
implies that, since
$3r<\tfrac14$, we have
\[
\cpXhat(\{\xi_j\}, \Bhat(\xi_{j},3r))
    = \lim_{\rho\to0}\cpXhat(\Bhat(\xi_{j},\rho), \Bhat(\xi_{j},3r)) 
    \simeq 
\frac{\muhat(\Bhat(\xi_{j},3r))}{r^p}.
\]
Moreover, by \cite[Proposition~6.16 and Lemma~3.6]{BBbook}, 
\[
    \cpXhat(\itoverline{\Bhat(\binfty,r)}, \Bhat(\binfty,2r)) 
\simeq \frac{\muhat(\Bhat(\binfty,r))}{r^{p}} 
\simeq \frac{\muhat(\Bhat(\xi_j,3r))}{r^{p}}.
\]
We thus see that 
\begin{align*}
    &\int_{0}^{\frac{1}{12}} \Biggl(
    \frac{\cpXhat \Bigl( \itoverline{\Bhat(\binfty,r)}\setm \Om, \Bhat(\binfty,2r)\Bigr)}
   {{\cpXhat(\itoverline{\Bhat(\binfty,r)}, \Bhat(\binfty,2r))}}
    \Biggr)^{1/(p-1)} \, \frac{dr}{r} \\
    & \qquad\qquad \simge 
    \sum_{j=1}^{\infty} 
    \int_{\dhat(\xi_{j})}^{2\dhat(\xi_{j})} \frac{dr}{r}
    = \sum_{j=1}^{\infty} \log 2
    = \infty.
\end{align*}
The sufficiency part of the Wiener criterion
\cite[Theorem~11.24]{BBbook} 
then shows that $\binfty$ is regular.
\end{proof}

\end{document}